\documentclass[12pt,a4paper,final]{amsart}
\usepackage[utf8]{inputenc}
\usepackage{amsthm}          %
\usepackage{amsmath}
\usepackage{amssymb}
\usepackage{buczofpc}
\usepackage{titletoc}
\DeclareRobustCommand{\SkipTocEntry}[5]{}
\usepackage[bookmarks=true,colorlinks,linkcolor=magenta,urlcolor=magenta,citecolor=magenta]{hyperref}
\usepackage[capitalize,noabbrev]{cleveref}
\crefformat{equation}{(#2#1#3)}
\crefformat{figure}{#2Figure #1#3}
\crefformat{enumi}{(#2#1#3)}
\crefformat{theorem}{#2Theorem~#1#3}
\crefformat{lemma}{#2Lemma~#1#3}
\crefformat{proposition}{#2Proposition~#1#3}
\crefformat{corollary}{#2Corollary~#1#3}
\crefformat{claim}{#2Claim~#1#3}
\crefformat{section}{#2§#1#3}
\crefformat{subsection}{#2§#1#3}
\crefalias{theorem}{black} 
\crefname{theorem}{\textcolor{black}{Theorem}}{\textcolor{black}{Theorems}}

\usepackage{enumitem}
\usepackage[dvips]{graphicx}
\usepackage{transparent}
\usepackage{wrapfig}
\usepackage[usenames,dvipsnames,svgnames,table]{xcolor}

\usepackage{color}
\usepackage{type1cm}         %
\usepackage[english]{babel}  %
\usepackage[bf]{caption}     
\usepackage[top=1.5cm, bottom=1.5cm]{geometry}        
\usepackage{bm}  
\usepackage{version} 
\usepackage[notcite,notref]{showkeys} 
\usepackage{showkeys} 
\usepackage{datetime}

\RequirePackage[hyperpageref]{backref}

\renewcommand*{\backref}[1]{}
\renewcommand*{\backrefalt}[4]{
\ifcase #1 (not cit.)
\or (cit. on p. #2)
\else (cit. on pp. #2)\fi}

\definecolor{aquam}{rgb}{0.5,1.0,1.0}
\definecolor{bbrown}{rgb}{0.75,0.38,0.15}
\definecolor{Cyan}{rgb}{0,0.6,0.6}
\definecolor{Darkblue}{rgb}{0,0,1}
\definecolor{Dodgerblue2}{rgb}{0,0.5,1}
\definecolor{Green}{rgb}{0,0.3,0.06}
\definecolor{Kahki}{rgb}{1,1,0.5}
\definecolor{Magenta}{rgb}{1,0,1}
\definecolor{bMagenta}{rgb}{1,.6,1}
\definecolor{Orange}{rgb}{0.8,0.3,0}
\definecolor{dOrchid}{rgb}{0.7,0.2,0.4}
\definecolor{Orchid}{rgb}{1,0.5,1}
\definecolor{Purple}{rgb}{0.65,0.07,0.85}
\definecolor{Royalblue}{rgb}{0.6,0.85,0.87}
\definecolor{Tan}{rgb}{0.54,0.42,0.23}
\definecolor{bTan}{rgb}{0.94,0.82,0.63}
\definecolor{zoltan}{rgb}{0,0.1,0.3}
\definecolor{Turquoise}{rgb}{0,0.85,0.87}
\definecolor{Yellow}{rgb}{1,1,0}
\definecolor{bYellow}{rgb}{1,1,0.6}
\definecolor{bRed}{rgb}{1,0.7,0.7}
\definecolor{boxcolb}{rgb}{0.87,0.77,0.75}
\definecolor{boxcol}{rgb}{0.6,0.85,0.87}
\definecolor{boxcolgreen}{rgb}{0.64,0.93,0.79}
\definecolor{boxcolaa}{rgb}{.75,.99,.70}
\definecolor{boxcolbb}{rgb}{0.39,0.50,0.56}
\definecolor{boxcolcc}{rgb}{1,0.81,0.65}
\definecolor{yy}{rgb}{0.43,0.21,.18}
\definecolor{gA}{gray}{0.5}
\definecolor{gB}{gray}{0.8}
\definecolor{gC}{gray}{0.9}

\numberwithin{equation}{section}

\theoremstyle{plain}
\newtheorem{theorem}{Theorem}[section]
\newtheorem{corollary}[theorem]{Corollary}
\newtheorem{proposition}[theorem]{Proposition}
\newtheorem{lemma}[theorem]{Lemma}

\theoremstyle{remark}
\newtheorem{remark}[theorem]{Remark}

\theoremstyle{definition}
\newtheorem{definition}[theorem]{Definition}

\def\mg{{\mathbf g}}

\def\H{{\mathcal H}}

\def\P{{\mathbb P}_p}
\def\Q{{\mathcal Q}}
\def\C{{\mathcal C}}

\def\K{{\mathcal K}}
\def\R{{\mathbb R}}

\def\N{{\mathbb N}}
\def\T{{\mathcal T}}

\def\ii{{\bf i}}

\def\bL{{\bf L}}

\newcommand{\mfc}{\mathfrak c}

\DeclareMathOperator{\dimh}{dim_H}

\DeclareMathOperator{\dist}{dist}

\DeclareMathOperator{\proj}{proj}

\begin{document}

\title[Pure unrectifiability, fractal percolation and graphs of functions]
{Purely unrectifiable sets, fractal percolation and graphs of functions}

\author[Z. Buczolich]{Zolt\'an Buczolich$^1$}
\address{Department of Analysis, ELTE E\"otv\"os Lor\'and University, 
         P\'azm\'any P\'eter s\'e\-t\'any 1/c, H-1117 Budapest, Hungary}
\email{zoltan.buczolich@ttk.elte.hu}

\subjclass[2000]{Primary: 28A80, Secondary: 26A27, 28A75, 28A78, 60J65, 60J80}
\keywords{unrectifiability, occupation measure, typical properties, level sets, Fractal percolation}

\thanks{The author thanks Westlake University, Hangzhou, China for the invitation to the Winter School and to prepare this survey.}

\begin{abstract} 
 
This paper contains a survey of some of the results of the author related to unrectifiablity and is an extended version of the author's talk given at 
the Second Winter School
Geometric Measure Theory
Rectifiability vs. Pure Unrectifiability in 
Hanghzou, China.
 These results include irregular/purely unrectifiable  $1$-sets on the graphs of continuous functions like the Takagi, the Weierstrass-Cellerier and the typical (in the sense of Baire) continuous function. It is also discussed that there exists $\aaa_{0}<1$ such that the fractal percolation is 
almost surely purely $\alpha$-unrectifiable for all $\alpha>\alpha_0$. The background of the $1$-unrectifiability is discussed in more detail.


\end{abstract}

\maketitle

\tableofcontents

\section{Introduction}\label{intro}

This paper contains a survey of some of the results of the author related to unrectifiablity and is an extended version of the author's talk given at the
Second Winter School
Geometric Measure Theory
Rectifiability vs. Pure Unrectifiability held at
Westlake University, Hanghzou, China, February 1 - 6, 2026.

The first part of this survey paper is based on \cite{Buirr} which is joint with 
 Esa and Maarit J\"arvenp\"a\"a, Tam\'as Keleti, and Tuomas
Pöyht\"ari.
 After some definitions in Section \ref{secunrect}, in Section \ref{model} we deal with the result about  almost sure pure $\aaa$-unrectifiability of fractal percolation. This requires some definitions and preliminary results in Subsection \ref{deffracperc}, which are followed by the statement of Theorem 
 \ref{maingeneral} which is the  main result of Section \ref{model} and states that there exists $\aaa_{0}<1$ such that the fractal percolation is 
almost surely purely $\alpha$-unrectifiable for all $\alpha>\alpha_0$.
The proof of this Theorem in \cite{Buirr} contains many diffucult and tricky technical details. Hence  in Subsection \ref{subsecpureunr}   we discuss the ideas behind the special case when one considers $1$-unrectifiability, which means unrectifiability in the usual sense and to make things even simpler we deal with only the two-dimensional case.  Section \ref{model} is closed with Subsection \ref{secghold} which contains a few remarks about the background of the proof of Theorem \ref{maingeneral}.

The second part of this paper is Section \ref{secregoncont}    which is devoted to regular and irregular sets on the graphs of continuous functions. 
By the Bolzano--Darboux property there can be at most one direction in which the projection of a continuous function defined on an interval does not contain an interval. Hence the ``regular'' part of the graph of a continuous function is always non-empty. It is much more interesting to find irregular sets on the graphs of fractal continuous functions.
In Subsection \ref{subsectak} the main result is Theorem \ref{thtakirrdec} which gives a natural decomposition of the graph of the Takagi function into regular and irregular parts. 

In the next subsection we consider typical/generic continuous functions in the Baire category sense. The key definitions are 
Definition \ref{subsecmicrocont} 
of the micro tangent sets and Definition \ref{UMTdef} of $UMT(f)$ the set of the universal-micro tangent points. In this subsection the main results are Theorems \ref{umt} and \ref{piy} telling that for almost every $x$ the point $(x;f(x))$ is in $UMT(f)$, but the projection of $UMT(f)$ onto the $y$-axis is of Lebesgue measure zero, in fact, the projection of this set is of zero measure into any other direction different from the projection onto the $x$-axis. This way we obtain a naturally defined irregular $1$-set on the graph of a typical continuous function.   This subsection is mainly based on the paper \cite{Bumicro}.

Theorem \ref{th2} shows that ``most'' points on the graph of the typical continuous function  in the sense of one-dimensional Hausdorff measure  are not in $UMT(f)$. Hence it is interesting to consider on the graph of the typical continuous function the other, non-$UMT$ points. This is the topic of Subsection \ref{subsecmicrocontvert}, which is based on the paper \cite{BuRa}. The concept of vertical universal points, $VMT(f)$ is introduced. According to Theorems \ref{T2}, \ref{thvmt} and \ref{T4}  at most points we have vertical universality, but as Theorem \ref{T3} shows there are still exceptional points.

Subsection \ref{subsecmicrotak} contains some remarks about the micro tangent sets of the Takagi function and of the $1D$ Brownian motion. In Theorem \ref{takagi} we see that at almost every $x$ there is a nice micro self-similarity property of the Takagi function while Theorem \ref{bmo} tells that the Brownian motion is too wild, with probability one its universal micro tangent set is empty and the central component of its micro tangent set is just one vertical line segment.

Subsection \ref{subsecweierstrass}  is devoted to the Weierstrass--Cellerier  nowhere differentiable function and is based on papers \cite{Buirra} and \cite{buczoccupation2010}. In Theorem \ref{irregthm} we mention that a decomposition into regular and irregular part of its graph can be obtained. This is analogous to the one obtained in Subsection \ref{subsectak} for the Takagi function. Cardinality of level sets and occupation measures also play an important role in different sections of this survey. In earlier sections it is mentioned that almost every level set of the Takagi function is finite and its occupation measure is singular with respect to the Lebesgue measure. Level sets of the typical continuous function are much larger, but its occupation measure is still singular. On the other hand, the Brownian motion with probability one has absolutely continuous occupation measure. In Subsection \ref{subsecweierstrass} a class of perturbed Weierstrass--Cellerier functions is introduced and in Theorem \ref{flevthm} we see that if a function belongs to this class then singularity of the occupation measure implies finiteness of almost every level set. In Theorem \ref{thmmainwoc} we recall that for the class ${{\mathcal W}}(x,c) {{=}} {{\mathcal W}}(x)+cx$ the occupation measure is singular for all $c$s.

The final subsection is based on \cite{buczolich2025levelsets}, a very recent joint paper with  Antti K\"aenm\"aki, and Bal\'azs Maga. Level sets and occupation measures of prevalent Weierstrass functions are considered. 
Theorem \ref{*prabscb} shows that a parametrized  class $W_ {\mathbf {t}}$ has absolutely continuous occupation measure with density in $L^2( {\mathbb {R}})$ for almost every parameter $\mathbf {t}$.
The main theorems in this section are Theorems \ref{thm:main1}  \ref{thm:main2} about dimensions of level sets of prevalent Weierstrass functions.
Key tool is Theorem \ref{prop:bi-holder} about the existence of Weierstrass embeddings.

\section{Unrectifiability}\label{secunrect}

The natural numbers are denoted by $\N=\{1,2,...\}$.

Given $A {\subset}  {\ensuremath {\mathbb R}}^{d}$, by $|A|$,  $int(A)$, and $cl(A)$
we mean its diameter, interior, and closure, respectively. The Lebesgue measure of $A\sse \R^d$ is denoted by ${\mathcal {L}}^d(A)$, in the one-dimensional case we use ${\mathcal {L}}(A)$ instead of ${\mathcal {L}}^1(A)$. 

The $k$-dimensional Hausdorff measure is denoted by $\cah^{k}$, the Hausdorff dimension of $A$ is denoted by $\dimh A$. For the definition of the Hausdorff measure and for some other basic concepts of fractal geometry we refer to \cite{FalFG}.
First we recall the definitions of rectifiability and unrectifiability.

 \begin{definition} \label{defrectif}  
  A Borel set $E\subset \mathbb R^d$ is {\it a rectifiable subset of dimension $k$} if $\dimh E=k$  and there exists   a countable family of Lipschitz maps $f_i: \mathbb R^k \to \mathbb R^d$ such that  
$\ds {\cah}^{k}(E\sm \cup f_{i}(\R^{k}))=0$, that is their images cover $\mathcal{H}^k$-almost all of $E$. 
 \end{definition} 
 
The classical planar case, with one dimensional Lipschitz curves was studied by {Besicovitch}  see \cite{Besi28,Besi38,Besi} and see also Falconer's book \cite{Fal1986}. For the higher dimensional case we refer to {Mattila}'s monograph \cite{M}.

 \begin{definition}  \label{defpurunrect}
 A Borel set of Hausdorff dimension $k$ which is not rectifiable is called {\it unrectifiable}.  
 
An unrectifiable $k$-dimensional set $E\subset \mathbb R^d$ is called  {\it purely $k$-unrectifiable} if its intersection with any $k$-dimensional rectifiable set is an $\mathcal{H}^k$-null set.
 \end{definition}
 
Previous definitions imply that  an unrectifiable set is purely unrectifiable if and only if its intersection with the image of an arbitrary Lipschitz map $f:\mathbb R^k\to \mathbb R^d$  is an $\mathcal{H}^k$-null set.

The following characterisation of pure $k$-unrectifiability can be found in \cite[Theorem 15.21]{M} or \cite[Theorem 3.2.29]{Fed}.

\begin{theorem}\label{FM}
Let $k\in\N$.  A set $F\subset\R^d$ is purely 
$k$-unrectifiable if $\H^k(M\cap F)=0$ for all $k$-dimensional 
$C^1$-submanifolds $M\subset\R^d$.
\end{theorem}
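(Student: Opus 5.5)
The plan is to reduce from Lipschitz images to $C^1$-submanifolds by a $C^1$-approximation of Lipschitz maps. By the remark after Definition~\ref{defpurunrect}, it suffices to show that $\H^k(F\cap f(\R^k))=0$ for every Lipschitz $f:\R^k\to\R^d$. Writing $\R^k$ as a countable union of bounded sets, we may restrict $f$ to a bounded domain.

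The first step uses Whitney-type $C^1$ approximation of Lipschitz functions (Federer 3.1.15, or Mattila's $C^1$ Lusin theorem), which rests on Rademacher's theorem. For every $\varepsilon>0$ it gives a $C^1$ map $g:\R^k\to\R^d$ with $\mathcal L^k(\{f\ne g\})<\varepsilon$. Since $\H^k(f(A))\le (\mathrm{Lip} f)^k\mathcal L^k(A)$, the part of $f(\R^k)$ not covered by $g(\R^k)$ has arbitrarily small $\H^k$ measure. Hence $f(\R^k)$ is covered, up to an $\H^k$-null set, by countably many images $g_i(\R^k)$ of $C^1$ maps.

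The second step treats a single $C^1$ map $g$, splitting $\R^k$ by the rank of $Dg$. On the critical set $\{x:\operatorname{rank}Dg(x)<k\}$, the Sard-type estimate (the area formula, or a direct covering argument using the smallness of the Jacobian) gives $\H^k(g(\{\operatorname{rank}Dg<k\}))=0$. On the open set where $Dg$ has rank $k$, the inverse function theorem (immersion form) lets us cover the set by countably many open balls $B_j$ on which $g|_{B_j}$ is a $C^1$ embedding. Each $g(B_j)$ is then a $k$-dimensional $C^1$-submanifold $M_j$, and by hypothesis $\H^k(F\cap M_j)=0$.

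Combining the two steps, $F\cap f(\R^k)$ is contained in a countable union of the sets $F\cap M_j$ together with $\H^k$-null sets, so it is $\H^k$-null, and $F$ is purely $k$-unrectifiable.

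The main obstacle is the first step, the $C^1$ approximation of a Lipschitz map off a set of small measure. I would quote it from \cite{Fed} or \cite{M} rather than reprove it. If a proof is needed, I would use Rademacher's theorem, choose by Lusin's and Egorov's theorems a compact set on which $Df$ is continuous and the differential quotients converge uniformly, and then apply the Whitney extension theorem.
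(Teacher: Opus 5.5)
Your proposal is correct and is essentially the standard argument behind the results the paper cites instead of proving (\cite[Theorem 15.21]{M}, \cite[Theorem 3.2.29]{Fed}). That argument has three steps: a Lusin-type $C^1$ approximation of Lipschitz maps off a set of small measure, the area formula to show the image of the critical set $\{\operatorname{rank} Dg<k\}$ is $\H^k$-null, and the inverse function theorem to cover the image of the regular set by countably many embedded $C^1$ graphs. Your reduction to Lipschitz images via the remark after Definition~\ref{defpurunrect} is also the intended reading of pure unrectifiability here. Taken literally, that definition requires $\dimh F=k$, which the hypothesis cannot guarantee (a single point satisfies it).
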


The following  is a simple consequence of the above and of  Fubini's theorem.

\begin{corollary}\label{1impliesk}
If $F\subset\R^d$ is a purely $1$-unrectifiable Borel set,  then it is
also purely $k$-unrectifiable for any $k\in \N$.
\end{corollary}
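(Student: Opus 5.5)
By \cref{FM}, it suffices to show that $\H^k(M\cap F)=0$ for every $k$-dimensional $C^1$-submanifold $M\subset\R^d$. The case $k=1$ is the hypothesis, so assume $2\le k\le d$; for $k>d$ there is nothing to prove.

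\emph{Localization and graph form.} First I localize. A $C^1$-submanifold is covered by countably many relatively open pieces, and after an orthogonal change of coordinates each piece is the graph of a $C^1$ map $g:U\to\R^{d-k}$, where $U\subset\R^k$ is open. Shrinking the pieces, I may assume $g$ is Lipschitz. Then $\Phi(x)=(x,g(x))$ is bi-Lipschitz from $U$ onto the piece $M_0$. Since bi-Lipschitz maps preserve $\H^k$-null sets, and $\H^k$ restricted to $\R^k$ is a multiple of $\mathcal L^k$, it suffices to show $\mathcal L^k(E)=0$, where $E=\Phi^{-1}(F\cap M_0)\subset U$. This set is Borel because $F$ is Borel and $\Phi$ is continuous.

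\emph{Fubini.} Write $x=(t,y)$ with $t\in\R$ and $y\in\R^{k-1}$. For fixed $y$, the slice $\{t:(t,y)\in U\}$ is open, and the curve $\gamma_y(t)=\Phi(t,y)$ is a $C^1$ curve with $|\gamma_y'|\ge 1$. Its image $\Gamma_y$ is a countable union of $1$-dimensional $C^1$ submanifolds (locally embedded arcs), hence $1$-rectifiable. Pure $1$-unrectifiability of $F$ gives $\H^1(F\cap\Gamma_y)=0$. The map $\gamma_y$ is injective, being a restriction of the injective map $\Phi$, and its inverse is Lipschitz because $|\gamma_y(t)-\gamma_y(s)|\ge|t-s|$. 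Therefore the slice $E_y=\{t:(t,y)\in E\}=\gamma_y^{-1}(F\cap\Gamma_y)$ has $\mathcal L^1(E_y)\le\H^1(F\cap\Gamma_y)=0$. Fubini's theorem, applied to the Borel set $E$, then gives $\mathcal L^k(E)=\int\mathcal L^1(E_y)\,dy=0$.

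\emph{Obstacles.} The only delicate points are these. One must check that each $\Gamma_y$ is genuinely rectifiable; this holds because it is a Lipschitz image of an open subset of $\R$, so it is covered by countably many Lipschitz images of $\R$, which is what the remark after \cref{defpurunrect} requires. One must also confirm measurability for Fubini, which the Borel assumption supplies. Finally, the dimension clause in \cref{defpurunrect} is a formality: if $\dimh F<k$ then $\H^k(F)=0$ trivially, and otherwise the argument above applies.
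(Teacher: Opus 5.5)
Your proof is correct and follows the route the paper intends: reduce via Theorem~\ref{FM} to $k$-dimensional $C^1$-submanifolds, write these locally as bi-Lipschitz graphs, and apply Fubini along a foliation by $C^1$ curves, each of which meets $F$ in an $\mathcal{H}^1$-null set. The paper records this only as a one-line remark (``a simple consequence of the above and of Fubini's theorem''), so your write-up supplies the details it omits.
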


In \cite{Buirr} we considered 
$\alpha$-H\"older curves instead of
 Lipschitz or $C^{1}$ ones.

Next we suppose that $I\subset\R$ is a closed and bounded
interval. 

\begin{definition} \label{def:holdal} 
Assume that $0<\alpha\le 1$ and $H\ge 0$. 
A curve $\gamma\colon I\to\R^d$ is {\it $(H,\alpha)$-H\"older  at 
$a\in I$}, if  for every $b\in I$,
\begin{equation*}
|\gamma(a)-\gamma(b)|\le H|a-b|^\alpha.
\end{equation*}
 A curve $\gamma$ is {\it $(H,\alpha)$-H\"older}, if it 
is $(H,\alpha)$-H\"older at every $a\in I$. A curve $\gamma$ is 
{\it $\alpha$-H\"older}, if for every $a\in I$ there is $H_a\ge 0$ such that
$\gamma$ is $(H_a,\alpha)$-H\"older at $a\in I$.
\end{definition}

If $\aaa=1$ and we have a uniform bound on $H_{a}$ then we get back the definition of Lipschitz curves.

It is easy to verify the following lemma:

\begin{lemma}\label{alphameasure}
Let $\gamma\colon I\to\R^d$ be an $(H,\alpha)$-H\"older curve for some 
$0<\alpha\le 1$ and $H\ge 0$. Assume that $A\subset I$. Then
$\H^{\frac 1\alpha}(\gamma(A))\le H^{\frac 1\alpha}\H^1(A)$.
\end{lemma}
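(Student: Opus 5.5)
The plan is to argue directly from the definition of Hausdorff measure via $\delta$-covers, transporting covers of $A$ in $I\subset\R$ to covers of $\gamma(A)$ in $\R^d$. The only property of $\gamma$ used is the global H\"older bound
\[
|\gamma(s)-\gamma(t)|\le H|s-t|^{\alpha}\quad\text{for all } s,t\in I,
\]
which holds because $\gamma$ is $(H,\alpha)$-H\"older at every point. We may assume $\H^1(A)<\infty$, since otherwise there is nothing to prove. If $H=0$, then $\gamma$ is constant, so $\gamma(A)$ is at most a point and $\H^{\frac1\alpha}(\gamma(A))=0$, because $\frac1\alpha\ge1>0$. So assume $H>0$.

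Fix $\delta>0$ and $\varepsilon>0$, and choose a countable cover $\{U_i\}$ of $A$ with $|U_i|\le\delta$ and
\[
\sum_i |U_i| \le \H^1_\delta(A)+\varepsilon \le \H^1(A)+\varepsilon .
\]
Replacing each $U_i$ by $U_i\cap I$ does not increase diameters and still covers $A$. The sets $\gamma(U_i\cap I)$ cover $\gamma(A)$, and by the H\"older bound
\[
|\gamma(U_i\cap I)|\le H|U_i|^{\alpha}\le H\delta^{\alpha}.
\]
Hence, with $\delta'=H\delta^\alpha$,
\[
\H^{\frac1\alpha}_{\delta'}(\gamma(A))\le \sum_i |\gamma(U_i\cap I)|^{\frac1\alpha}\le H^{\frac1\alpha}\sum_i|U_i|\le H^{\frac1\alpha}\bigl(\H^1(A)+\varepsilon\bigr).
\]
Letting $\varepsilon\to0$ and then $\delta\to0$ (so $\delta'\to0$) gives the claim.

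The only point needing care is normalization. If the paper's $\H^s$ carries a constant factor such as $\omega_s 2^{-s}$, the two normalizing constants for $s=1$ and $s=\frac1\alpha$ must be compared. For $\alpha=1$ they coincide. In the standard convention of \cite{FalFG}, which uses no normalizing constants, the argument above is exact. I expect no genuine obstacle, and will state that I use the unnormalized definition from \cite{FalFG}.
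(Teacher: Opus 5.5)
Your proof is correct: it is the routine $\delta$-cover argument, which is exactly what the paper has in mind when it says the lemma is ``easy to verify'' (the paper gives no further proof). Your normalization worry is harmless. The paper uses Falconer's unnormalized $\H^s$, and even with the normalizing constant $\omega_s2^{-s}$ the inequality still holds, because this constant equals $1$ at $s=1$ and is at most $1$ for $s=\frac1\alpha\ge 1$.
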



Since we work with countable covers it is not a problem that 
 in our definition of an $\alpha$-H\"older curve we did not assume that we have a global and uniform constant $H$. In case one wants to work with curves with uniform bound on $H_{a}$ the following lemma can be used:

\begin{lemma}[Lemma 6.3 of \cite{Buirr}]\label{holdercover}
Suppose that $0<\alpha\le 1$. If $\gamma\colon I\to\R^d$ is an $\alpha$-H\"older curve then there is a countable collection of curves
$\gamma_i\colon I\to\R^d$, $i\in\N$, such that $\gamma_i$ is
$(i,\alpha)$-H\"older and 
\[
\gamma(I)\subset\bigcup_{i=1}^\infty\gamma_i(I).
\]
\end{lemma}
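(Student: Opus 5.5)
We prove Lemma~\ref{holdercover} by a Baire-type stratification of $I$ according to the pointwise H\"older constants, followed by interpolation on each stratum.

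\textbf{Stratification.} For $n\in\N$ put
\[
A_n=\{a\in I:\ |\gamma(a)-\gamma(b)|\le n|a-b|^\alpha \text{ for all } b\in I\}.
\]
Since $\gamma$ is $\alpha$-H\"older at every point, $I=\bigcup_n A_n$.

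Each $A_n$ is closed. First, $\gamma$ is continuous, because it is H\"older at each point. Now let $a_k\in A_n$ with $a_k\to a$, and fix $b\in I$. The inequality $|\gamma(a_k)-\gamma(b)|\le n|a_k-b|^\alpha$ passes to the limit and gives $a\in A_n$.

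Any two points $a,a'\in A_n$ satisfy $|\gamma(a)-\gamma(a')|\le n|a-a'|^\alpha$. So $\gamma|_{A_n}$ is uniformly $(n,\alpha)$-H\"older, and in fact it is $(n,\alpha)$-H\"older at each of its points against all $b\in I$.

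\textbf{Extension.} We extend $\gamma|_{A_n}$ to a curve $\gamma_n\colon I\to\R^d$ that is $(C_n,\alpha)$-H\"older on all of $I$ and agrees with $\gamma$ on $A_n$.

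Write $I\setminus A_n$ as a countable union of relatively open intervals. On a bounded complementary gap $(s,t)$ with $s,t\in A_n$, define $\gamma_n$ by linear interpolation between $\gamma(s)$ and $\gamma(t)$. On a gap that reaches an endpoint of $I$, or if $A_n=\emptyset$, make $\gamma_n$ constant.

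Within a single gap of length $\ell=t-s$, the map has Lipschitz constant at most $n\ell^{\alpha-1}$. Hence for $x,y$ in the gap,
\[
|\gamma_n(x)-\gamma_n(y)|\le n\ell^{\alpha-1}|x-y|\le n|x-y|^\alpha,
\]
using $|x-y|\le\ell$.

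For $x,y$ in different pieces, split the segment $[x,y]$ at the nearest points of $A_n$: the gap endpoint $t$ next to $x$ and the gap endpoint $s'$ next to $y$. The triangle inequality gives three terms. Each is bounded by $n$ times a power of a subinterval length, and each subinterval has length at most $|x-y|$. This yields constant $3n$. For the boundary gaps where $\gamma_n$ is constant, the bound is the same or better.

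Therefore $\gamma_n$ is $(3n,\alpha)$-H\"older, and $\gamma(A_n)=\gamma_n(A_n)\subset\gamma_n(I)$.

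\textbf{Reindexing.} The lemma asks for $\gamma_i$ to be $(i,\alpha)$-H\"older. An $(H,\alpha)$-H\"older curve is also $(H',\alpha)$-H\"older for every $H'\ge H$. So we set $\gamma_{3n}:=\gamma_n$ and let $\gamma_i$ be a constant curve, which is trivially $(i,\alpha)$-H\"older, for $i$ not divisible by $3$. Then
\[
\gamma(I)=\bigcup_n\gamma(A_n)\subset\bigcup_i\gamma_i(I).
\]

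The main technical point is the cross-gap estimate for the extension. Splitting at the nearest points of $A_n$ reduces it to the within-gap bound together with the H\"older bound between points of $A_n$. The constant $3n$ is harmless because of the reindexing step.
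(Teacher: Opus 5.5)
Your argument is correct and follows the intended route (the survey only defers to the cited paper): stratify $I$ into the closed sets $A_n$ of points at which $\gamma$ is $(n,\alpha)$-H\"older, extend $\gamma|_{A_n}$ piecewise linearly across the complementary intervals and constantly on end gaps, then absorb the factor $3$ by reindexing. The constant on an end gap must equal $\gamma$ at the adjacent point of $A_n$, which you leave implicit. Writing $\gamma_{3n}:=\gamma_n$ overloads the index, so use a new letter, or stratify with constant $n/3$ so that no reindexing is needed.
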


The proof of this lemma can be regarded to be an exercise, the details can be found in \cite{Buirr}.

Based on Lemmas \ref{alphameasure} and \ref{holdercover} we define the following generalization of $1$-rectifiability and pure 1-unrectifiability.

\begin{definition}\label{alpharectifiable}
Let $0<\alpha\le 1$. A set $A\subset\R^d$ is {\it $\alpha$-rectifiable}, if
there exist $\alpha$-H\"older curves $\gamma_i\colon I\to\R^d$, $i\in\N$, such
that $\H^{\frac 1\alpha}(A\setminus(\bigcup_{i=1}^\infty\gamma_i(I)))=0$.

 A set 
$A\subset\R^d$ is {\it purely $\alpha$-unrectifiable}, if 
$\H^{\frac 1\alpha}(A\cap\gamma(I))=0$ for all $\alpha$-H\"older curves 
$\gamma\colon I\to\R^d$.
\end{definition}

The above Definition \ref{alpharectifiable} is much less known than the concept of the usual  rectifiability. However,   
 fractional rectifiability questions have already 
been studied by some authors. See, for example the papers \cite{MM1,MM2,MM3} by Mart\'{\i}n and Mattila and results for measures by 
 Badger, Naples and
Vellis in \cite{BadNapVel,BadVell}.

\section{Fractal percolation}\label{model}

\subsection{Definition of the fractal percolation.} \label{deffracperc} 
\begin{figure}[hb]
\centering
\includegraphics[width=0.45\linewidth]{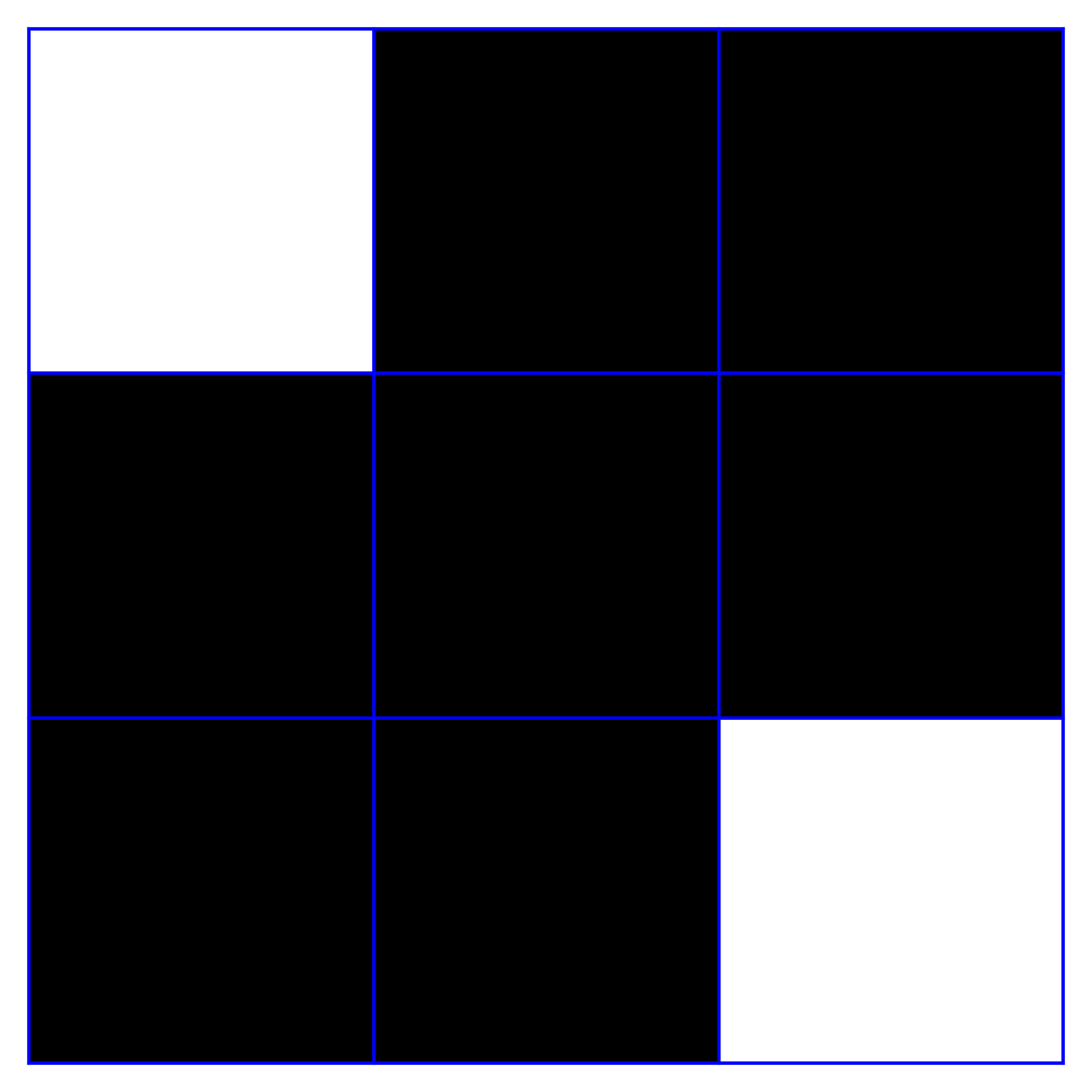}
\includegraphics[width=0.45\linewidth]{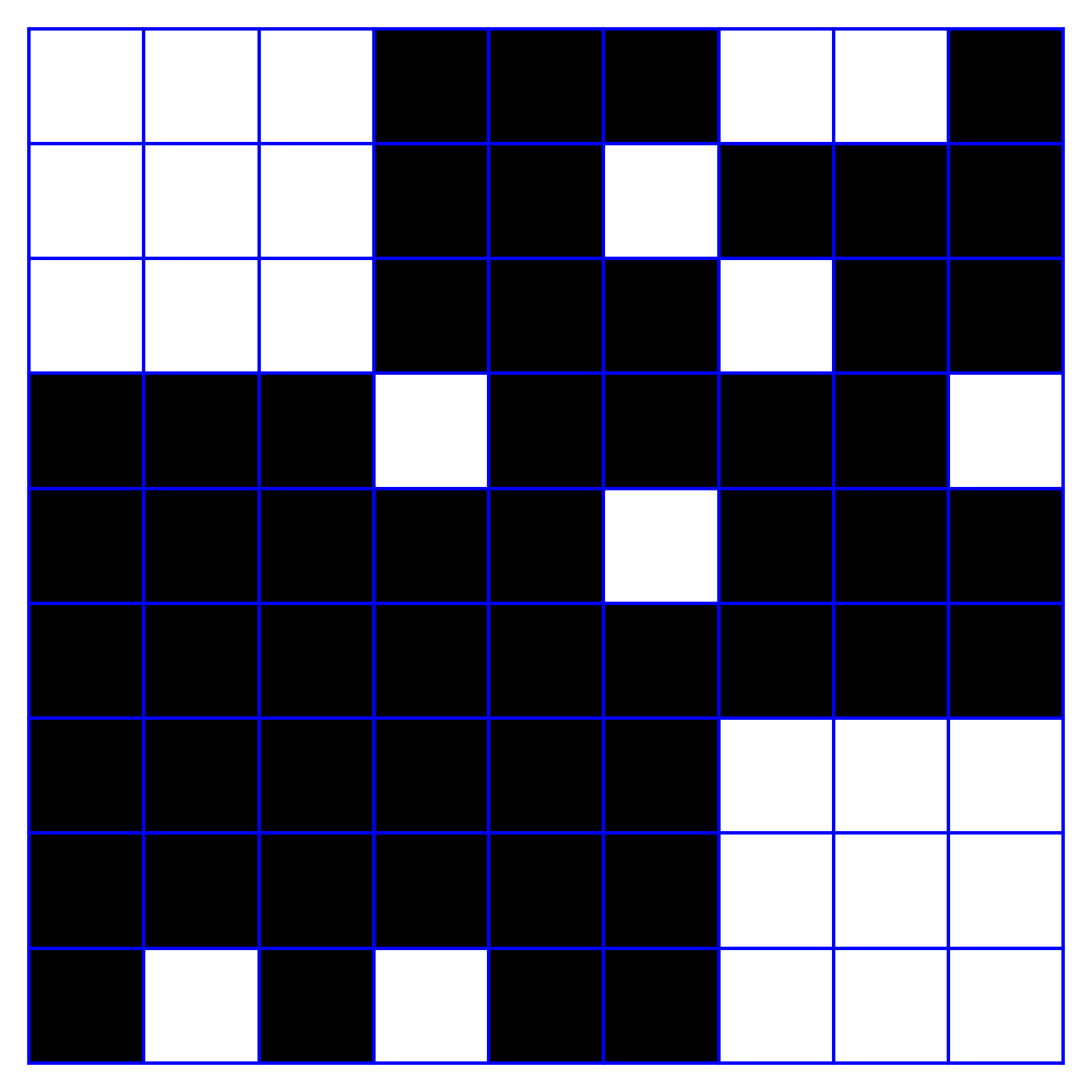}
\captionsetup{width=0.9\textwidth}
\caption{Fractal percolation: $N=3$, Steps 1 and 2, deleted squares are white $p=1-1/7$}
\label{fig-perc1}
\end{figure}

Suppose $0\le p\le 1$, and let $N\in\N$ with $N\ge 2$.
Fixing $d\in\N$, we construct a random compact subset $E$ in the
unit cube $Q_0:=[0,1]^d\subset\R^d$.
First we divide $Q_0$ into $N^d$ subcubes of equal size. Independently of 
each other, we keep/choose one with probability $p$ and delete with
probability $1-p$. We denote by $\C_1$ the collection of the retained subcubes. We repeat independently the same process in each $Q\in\C_1$. 
We denote by $\C_2$ the kept
 cubes at the second level. By  iterating
this process we obtain the sets $\C_{n}$, $n\in \N$. The fractal percolation set $E$, is defined by
\[
E:=\bigcap_{n=1}^\infty \bigcup_{Q\in\C_n}Q.
\]
One possible outcome of this fractal percolation process is illustrated on  
 \cref{fig-perc1,fig-perc2}. This is the planar case $d=2$ with $N=3$, 
 when the percolation was calculated the probability of deleting was 1/7. 
 On the figures the deleted squares are white and the kept ones are black.
 
  In \cite{Buirr} we used the following formal approach, slightly different from the above one. 
For $n\in\{0,1,2,...\}$, let
\[
\Q_n:=\Big\{\prod_{i=1}^d[(l_i-1)N^{-n},l_iN^{-n}]\mid l_i=1,\dots,N^n \text{ and } 
        i=1,\dots,d\Big\}
\]
be the collection of grid subcubes of $Q_0:=[0,1]^d$ with side length $N^{-n}$.
{\it The level of a cube} $Q\in\Q_n$ is $n$.
In our definition at each level we make a random independent decision whether a cube is being kept, or deleted and in the end we take an intersection of the kept regions.
 
Let 
$J:=\{1,\dots,N^d\}$. 
The $n$ long finite words with letters in $J$
are denoted by $J^{n}$.
 
One can define in a natural way a consistent indexing of $\Q_{n}$ by elements of $J^n$ in  a way that if $Q'\in \Q_{n+1}$ is a subcube of $Q\in \Q_{n}$ then the string in $J^{n}$ corresponding to $Q$ is a substring of the string corresponding to $Q'$.
Under this indexing/bijection $Q_\ii$
corresponds to $\ii\in J^n$.

The probability space $\Omega$ is the space of all constructions and
the natural probability measure on $\Omega$ induced by this procedure is
denoted by $\P$. 

 Let 
$\T$ be the rooted $N^d$-branching tree. 
 The set of vertices of $\T$ is denoted by $v(\T)$.
 One can index 
the vertices of $\T$  by
finite words with letters in $J$ in a natural way  by elements of 
$\bigcup_{n=0}^\infty J^n$.
Elements of $J^n$ are used to encode
the vertices whose distance to the root is $n$.
This indexing defines a bijection between vertices of $\T$ and cubes in $\cup_n\Q_{n}$.
For simplicity we identify the vertices of $\T$ with our index set, and we will speak of a vertex $\ii$ instead of writing $v(\ii).$

We put
$\Omega:=\{0,1\}^{v(\T)}=\{\omega\mid\omega\colon v(\T)\to\{0,1\}\}$.
The root $\T$ corresponds to the empty word $0$, and the corresponding cube is the unit cube. 

The elements $\ooo\in\OOO$ define the percolations. Given a vertex $v=\ii$  the cube $Q_\ii$ at level $n$ is kept if $\ooo(\ii)=1$ and deleted if $\ooo(\ii)=0$. 

Given $\omega\in\Omega$  the fractal percolation set $E(\omega)\subset\R^d$ is defined 
 as follows:   the set of kept cubes
in $\Q_n$ is denoted by $\C_n(\omega):=\{Q_\ii\in\Q_n\mid\omega(\ii)=1\}$. For 
every $\omega\in\Omega$, we define the fractal percolation set $E(\omega)$ by
\[
E(\omega):=\bigcap_{n=0}^\infty\bigcup_{Q\in\C_n(\omega)}Q.
\]
Several of our later statements are concerning percolations conditioned on non-extinction, that is those cases when $E(\ooo)\not=\ess.$  Obviously, this holds  if and only if there exists an infinite 
path $T\subset\T$ rooted at $0$ such that $\omega(\ii)=1$ for all vertices
$\ii$ of $T$. It is clear that $E=\emptyset$ with positive probability if $p<1$, since even at the first step  it may happen that all cubes are deleted and therefore
$\C_1$ can be empty. 

In this approach the sets $\bigcup_{Q\in\C_n(\omega)}Q$ are not nested, but they are independent for different $n$s and several technical issues in the proofs have a simpler treatment. If one wants to think of the traditional approach to fractal percolation then the nested sets $\widetilde{C}_n=\cap_{k=0}^n\cup_{Q\in\C_k(\omega)}Q$ can be considered.

 Suppose $\delta_k$ is the Dirac measure at $k\in \{0,1\}$.
Given $0\le p\le 1$, one can define a Borel probability measure $\P$ on $\Omega$ 
by 
\[
\P:=((1-p)\delta_0+p\delta_1)^{v(\T)}.
\]
The probability space for our fractal percolation will be
$(\Omega,\mathcal B,\P)$, where $\mathcal B$ is the completion of the Borel 
$\sigma$-algebra.

\begin{figure}[ht!]
\centering
\includegraphics[width=0.45\linewidth]{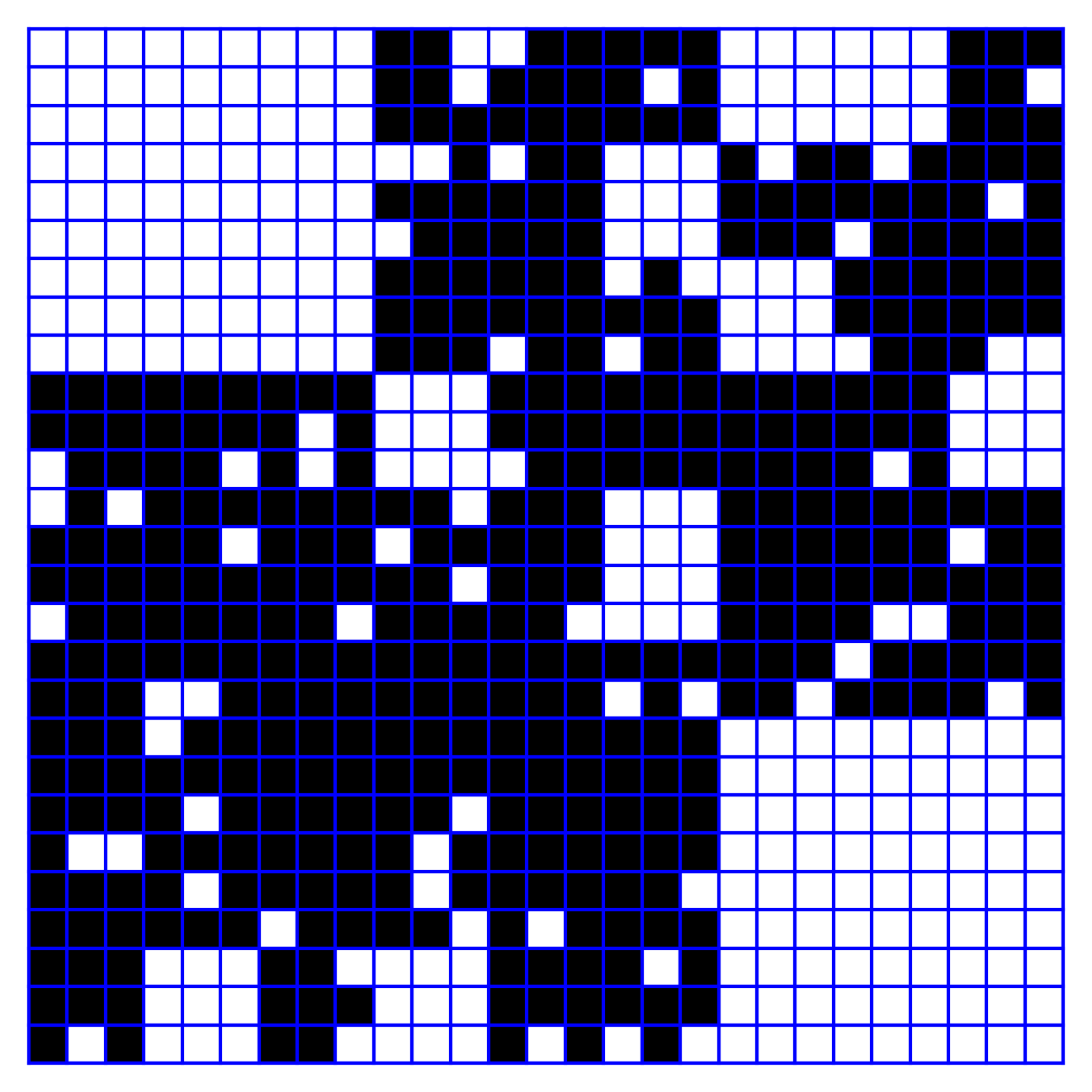}
\includegraphics[width=0.45\linewidth]{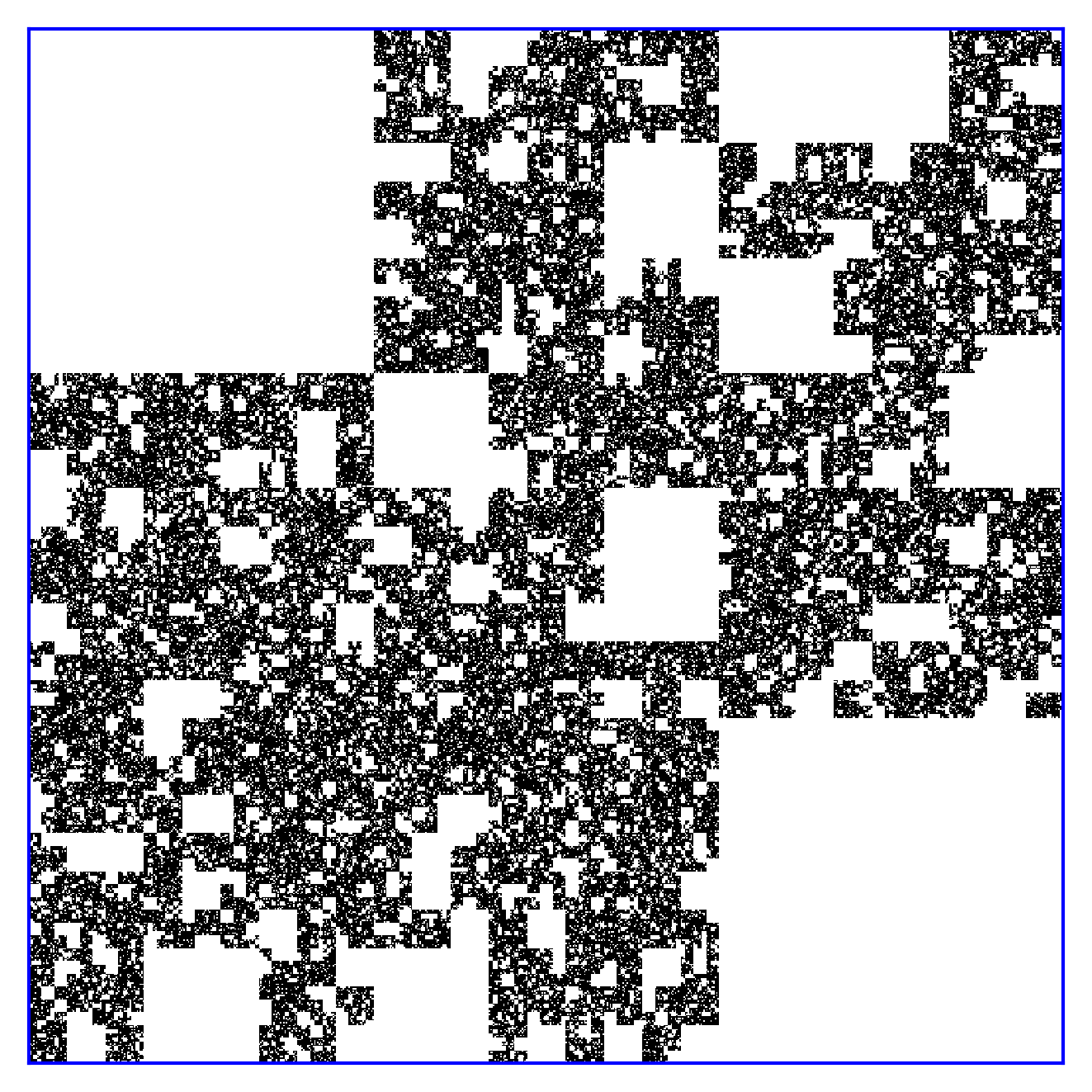}
\captionsetup{width=0.9\textwidth}
\caption{Fractal percolation: $N=3$, Steps 3 and 7, $p=1-1/7$, Step 7 is without grid}
\label{fig-perc2}
\end{figure}

There is a vast literature about fractal percolation. For some further information we refer to  \cite{Chaa} and \cite{Grimpercol}, or to the recent survey \cite{MR4911843}.

The name fractal percolation comes from the fact that first connectedness questions were studied of the set $E$, which is now called the percolation set. The planar, two-dimensional case was considered by  J.T. Chayes, L. Chayes and R. Durrett
 in \cite{CCDur}. They showed that there is
a critical probability $0<p_c<1$ such that if $p<p_c$, then $E$ is totally 
disconnected with probability one, this also includes the cases when $E$ is empty. 

On the other hand, 
$p\ge p_c$ the opposing sides of $Q_0$ are 
connected by a connected component of $E$ with positive probability.  

The exact value of $p_c$ is not known.
In \cite{Don15} for the case shown on our figures, that is for the planar case with $3\times 3$ subdivisions of the square
the estimate is $0.784< p_{c} < 0.940$.  For the creation of the figures we used $p=1-1/7\approx 0.857.$
 There is also a survey preprint \cite{KolTro2025} for further information.

For the higher dimensional cases, $d> 2$ analogous results are true,
see for example \cite{FalGrim}.

There are several further results about the  connectedness properties of $E$. If $p\ge p_c$ Meester in \cite{Mee} showed that under the usual assumptions $E$ contains a non-trivial path connected component.

The results  in \cite{Cha2} imply that  $E$ does not contain uniform $\alpha$-H\"older
curves for $\alpha$ close to $1$ and hence, $E$ does not contain any
rectifiable curve.

In the planar case when $p\ge p_c$
Broman, Camia, Joosten, and  Meester in \cite{BroCamMat} showed that   
there is a decomposition of $E$ into a  totally disconnected part $E^{d}$ and a part $E^c$ which consists of non-trivial connected components
of $E$. 
They also showed that $\dimh E^c<\dimh E^d=\dimh E$ and depending on the parameters there exists $0<\beta<1$, 
 such that $E^c$ is an uncountable union of 
non-trivial $\beta$-H\"older curves. 

Our main result in \cite{Buirr} is the following theorem:

 \begin{theorem}[Theorem 6.13 of \cite{Buirr}]\label{maingeneral}
For all $0\le p<1$, there exists $\alpha_0=\alpha_0(p,d,N)<1$ such that, for 
$\P$-almost all $\omega\in\Omega$, the set $E(\omega)$ is purely
$\alpha$-unrectifiable for all $\alpha_0<\alpha\le 1$.
\end{theorem}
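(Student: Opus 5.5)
Fix $\alpha$ close to $1$ and set $s=1/\alpha$. By Lemma \ref{holdercover} and countable subadditivity it suffices to show: almost surely, for every $H\in\N$, $\H^{s}(E(\omega)\cap\gamma(I))=0$ for every $(H,\alpha)$-H\"older curve $\gamma\colon I\to\R^d$ (with $I=[0,1]$, say). Since the class of curves is uncountable, we cannot argue curve by curve. Instead we discretize: at scale $N^{-n}$, an $(H,\alpha)$-H\"older curve restricted to an interval $J\subset I$ of length $\ell$ meets at most $C(H,d)\,\ell N^{n\alpha}+1$ cubes of $\Q_n$. Moreover, the ordered sequence of cubes it visits is a lattice path of length about $N^{n\alpha}$ in which consecutive cubes are neighbours (after refining the parametrisation). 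The number of such lattice paths is at most $C^{N^{n\alpha}}$ with $C=C(d,H,N)$.

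The key estimate is the following. For a fixed lattice path $P$ of $K$ cubes at level $n$, let $X_n(P)$ be the number of cubes of $P$ that survive, i.e.\ that lie in $\widetilde C_n$, all of whose ancestors are kept. I would prove a large-deviation bound of the form
\[
\P\bigl(X_n(P)\ge \epsilon K\bigr)\le \exp(-c\,K)
\]
with $c$ as large as we like once $n$ is large, or more precisely with survival fraction decaying geometrically in $n$. The heuristic is that along a connected chain, survival at each level is like independent thinning with probability $p$ applied to blocks. Survival is therefore dominated by a branching-type process along the path: at each level, a block of $N$ consecutive cubes shares a parent, and with probability $1-p$ it is killed.

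To make this rigorous, one can group the path into consecutive groups of level-$(n-k)$ ancestors and use independence of $\omega$ at different levels. Each level independently multiplies the expected surviving fraction by $p$, while the number of distinct ancestors at level $m$ of a path of $K$ cubes is at least about $K N^{m-n}$. This yields $\mathbb E X_n(P)\le K p^{\,\theta n}$ for some $\theta>0$, and Chernoff/martingale concentration then gives exponential tails in the number of distinct coarse ancestors. Taking a union bound over the $C^{N^{n\alpha}}$ paths, we need $p^{\theta n}$-type decay to beat $C^{N^{n\alpha}}$. This works when the number of cubes needed to cover the $s$-measure, roughly $N^{n\alpha}\cdot p^{\theta' n}$ times $N^{-ns}$, sums to zero. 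Equivalently, $\alpha$ is chosen so that $N^{n(\alpha-1)}$ losses are compensated by $p^{\theta n}$ gains, and this is where $\alpha_0<1$ comes from.

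Given the uniform bound, Borel--Cantelli shows that almost surely, for all large $n$, every admissible path has at most $\epsilon_n K$ surviving cubes with $\epsilon_n\to0$ geometrically. Covering $E\cap\gamma(J)$ by those surviving cubes gives
\[
\H^s_\infty(E\cap\gamma(J))\lesssim \epsilon_n N^{n\alpha}\ell\, N^{-ns}\to0,
\]
so the $\H^s$ measure vanishes. The main obstacle is the concentration step. Survival events along a path are strongly correlated through shared ancestors, and paths can fold back, so that many cubes share parents and the count of distinct ancestors is small. I would handle this by working with distinct ancestors at each level: a sufficiently long path must still meet about $K N^{-(n-m)\alpha}$-many distinct level-$m$ cubes by the H\"older condition. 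A multiscale induction then bounds the probability that a positive fraction survives, level by level.
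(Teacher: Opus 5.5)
There is a genuine gap: the union bound over lattice paths cannot work, and the tail bound it relies on is false. For a fixed path $P$ of distinct level-$n$ cubes, all of them survive exactly when $\omega\equiv 1$ on the set $\mathcal A(P)$ of cubes of levels $0,\dots,n$ containing a cube of $P$. Hence $\P(X_n(P)=\#P)=p^{\#\mathcal A(P)}$.

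Take $d=2$ and a monotone (right/up) staircase path of $K\le N^n$ cubes starting at the corner cube. It discretizes a Lipschitz curve, so it is admissible for every $\alpha\le 1$. It meets at most $(K+1)N^{m-n}+3$ cubes of level $m$, so $\#\mathcal A(P)\le 2K+O(n)$ and $\P(X_n(P)\ge\epsilon K)\ge p^{2K+O(n)}$. Consequently:
\begin{itemize}
\item The rate $c$ in your bound $\exp(-cK)$ is at most about $2\log(1/p)$. It does not grow with $n$, and it tends to $0$ as $p\to 1$.
\item There are $2^{K-1}$ such paths, so your union bound is at least $2^{K-1}p^{2K+O(n)}\to\infty$ whenever $p^2>1/2$.
\end{itemize}
So for $p$ near $1$ the method yields nothing, even for $\alpha=1$, where the conclusion is true (Corollary~\ref{mainstandard}). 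The entropy of admissible paths depends only on $d,N,H,\alpha$, while the probabilistic cost of a path surviving is only $p^{O(K)}$. No concentration estimate for a fixed path can change this. Comparing $p^{\theta n}$ with $C^{N^{n\alpha}}$ is hopeless for the same reason: one is single exponential in $n$, the other double exponential.

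Two further steps are also wrong.
\begin{itemize}
\item An $(H,\alpha)$-H\"older curve on an interval of length $\ell$ can meet about $H^{1/\alpha}\ell N^{n/\alpha}$ cubes of $\Q_n$, not $N^{n\alpha}$. With the correct count, covering by the visited cubes only gives $\H^{1/\alpha}_\infty\lesssim\ell$ (compare Lemma~\ref{alphameasure}). Your final decay $N^{n\alpha}N^{-ns}\to 0$, and your heuristic origin of $\alpha_0$, are artifacts of this slip. Everything actually rests on a surviving fraction tending to $0$ uniformly over all curves.
\item The H\"older condition only bounds diameters from above. It gives no lower bound on the number of distinct ancestors, since a H\"older curve can be constant or retrace itself.
\end{itemize}

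What is missing is the mechanism that lets the paper avoid any union bound over curves. Randomness is used only for polynomially many events per cube. A cube $Q\in\Q_n$ is $m_0$-good if every line that is parallel to one of the at most $cN^{m_0}$ segments in $\Gamma(Q,m_0)$ and intersects $Q$ properly also properly meets a strongly deleted subcube of level $n+m_0$. Proposition~\ref{probgood} gives $\P(Q\text{ is }m_0\text{-good})\ge p_g(m_0)\to 1$. These cubes are then organized into the $(\bL,\mfc,m_0)$-good hierarchy along a carefully chosen sequence of zoom levels, with at most $\mfc$ bad subcubes at each stage.

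From there the argument is deterministic and treats all curves at once. A curve crossing a good cube close to $E$ must either lose a piece inside a strongly deleted subcube or detour around it, gaining a definite proportion of length at that scale. If this is forced at most scales, it is incompatible with the growth by a factor of about $N^{1/\alpha-1}$ per level that an $(H,\alpha)$-H\"older curve permits once $\alpha$ is close to $1$. That geometric mechanism is where $\alpha_0<1$ comes from; in your argument $\alpha$ enters only through the path count.
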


\subsection{Pure $1$-unrectifiability.} \label{subsecpureunr} 
It is worth stating the special case of  Theorem \ref{maingeneral}  with $\aaa=1$: 

\begin{corollary}[Theorem 4.17 of \cite{Buirr}]\label{mainstandard}
For all $0\le p<1$, the set $E(\omega)$ is purely $1$-unrectifiable for 
$\P$-almost all $\omega\in\Omega$.
\end{corollary}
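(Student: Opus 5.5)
We reduce, via Theorem \ref{FM} and Lemma \ref{holdercover}, to a single Lipschitz curve. It suffices to show that almost surely $\H^1(E(\omega)\cap\gamma(I))=0$ simultaneously for all curves $\gamma$ that are $(H,1)$-H\"older, i.e.\ $H$-Lipschitz, for every $H\in\N$. We cannot argue curve by curve, since there are uncountably many curves and $E(\omega)$ is random. Instead we fix $H$ and aim for a uniform deterministic-looking statement about $\omega$. The target is
\[
\lim_{n\to\infty}\ \sup_{\gamma}\ \ \sum_{Q\in\widetilde{\C}_n,\ Q\cap\gamma(I)\neq\emptyset}|Q| \;=\;0 ,
\]
where the supremum is over $H$-Lipschitz $\gamma$ and $\widetilde{\C}_n$ denotes the surviving nested cubes at level $n$. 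This covers $E\cap\gamma(I)$ by cubes of total diameter tending to $0$, so $\H^1(E\cap\gamma(I))=0$. (For $d>2$ the same scheme works; Corollary \ref{1impliesk} is not needed here.)

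The key steps in order are as follows.

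\emph{Discretize the curve.} An $H$-Lipschitz curve on $I$ meets at most $C(H,d)N^n$ cubes of $\Q_n$. Moreover, the set of cubes it meets can be encoded as a lattice path (a chain of adjacent cubes) of length $\le CN^n$. Refining from level $n$ to level $n+k$, each level-$n$ cube visited is crossed by a sub-path of about $N^k$ level-$(n+k)$ cubes.

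\emph{One-step deletion gain.} Fix $k$ large and consider a level-$n$ cube $Q$ and the chain through it at level $n+k$. The probability that all level $n+1,\dots,n+k$ ancestors along a given chain of length $\sim N^k$ are kept is exponentially small in the number of distinct ancestors. Only a bounded number of chain shapes per cube need to be considered, roughly $c^{N^k}$ chains. The aim is to show that, with high probability, every chain through $Q$ loses at least a fixed fraction $\eta>0$ of its length.

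\emph{Iteration.} Iterating over blocks of $k$ levels gives surviving length at level $n$ of at most $(1-\eta)^{n/k}$ times $CN^n$ cubes of side $N^{-n}$, hence total diameter tending to $0$.

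\emph{Union bound and Borel--Cantelli.} The number of chains at level $n$ is at most $N^{dn}\cdot c^{N^n}$. This is too large for a naive union bound over whole curves. We therefore work with a multiscale good/bad cube structure. Call a level-$n$ cube \emph{good} if within the next $k$ levels every admissible chain through it loses a fraction $\eta$. A fixed cube is bad with probability $\varepsilon(k)\to0$, independently for disjoint cubes. We then use a Peierls/branching-process comparison: along any chain, the proportion of bad cubes exceeding $1/2$ has probability at most $(C\varepsilon)^{\text{length}}$, which beats the $c^{\text{length}}$ chain count once $\varepsilon$ is small. Borel--Cantelli over $n$ then gives the uniform statement almost surely, and a countable union over $H$ finishes the proof.

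The main obstacle is precisely this entropy-versus-probability balance. The gain event for different chains is correlated, and the chain count grows like $c^{\text{length}}$. To make it work, the fraction deleted must be uniform over all chains inside a cube: when $p<1$ a positive fraction of the level-$(n+1)$ subcubes is deleted, but a chain can try to avoid them. The first thing I would try is to choose $k$ so large that, with high probability, deleted cubes form "walls" in every subcube at some intermediate scale. A Lipschitz chain of bounded slope must then cross a proportional number of deleted cubes, since it cannot detour by more than a factor depending on $H$ without exceeding its length budget. The detour cost is what yields $\eta$ depending only on $p,d,N,H$.
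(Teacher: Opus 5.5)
\textbf{There is a genuine gap, located at your ``one-step deletion gain''.} You need a cube to satisfy, with probability $1-\varepsilon(k)$, the event that every admissible chain through $Q$ loses a fixed fraction $\eta$ of its length. That event fails for chains that are constrained only by the Lipschitz bound.

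An $H$-Lipschitz curve is not ``of bounded slope'': $H$ controls total length, not local straightness. So a piece of the curve can spend a constant-factor detour inside any given cube. At a single depth, the deleted subcubes of $Q$ form a Bernoulli site percolation. For $p$ close to $1$ the retained ones contain, with high probability, crossings of $Q$ whose length is a bounded multiple of the side of $Q$. Correspondingly, the number of lattice chains of length $L\sim N^k$ in $Q$ is $e^{\Theta(L)}$, while a fixed chain avoids all deletions at that depth with probability $p^{L}$. So no union bound over chains can give $\varepsilon(k)\to 0$, and for $p$ near $1$ the event itself has small probability.

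The ``walls'' remedy fails as well. For $p\ge p_c$ the set $E\cap Q$ connects opposite sides of $Q$ with probability bounded away from $0$, so blocking walls are not present with high probability.

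What survives of your plan is the heuristic that detours cost length, and a curve of bounded length can afford them only at few (cube, scale) pairs. Making this rigorous requires a global multiscale bookkeeping: how detour costs at different scales accumulate, at which zooming rate, and how to handle cubes where the curve winds or concentrates its length. Such cubes also invalidate your per-cube chain count $c^{N^k}$ and the iteration $(1-\eta)^{n/k}$, since length can be re-concentrated inside surviving cubes. That bookkeeping is precisely the hard part of the paper's H\"older-$\alpha$ proof with nested $(\bL,\mfc,m_0)$-good squares, and your proposal does not supply it. Moreover, the bound you target, uniform over all $H$-Lipschitz curves, is much stronger than what is needed.

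\textbf{How the paper avoids this for $\alpha=1$.} It genuinely uses Theorem~\ref{FM}, which you cite but then abandon. It suffices to treat $C^1$ curves, and a $C^1$ curve looks like a line at sufficiently deep zoom levels. This collapses the entropy from exponential to polynomial. Per cube one only has to handle the $O(N^{m_0})$ discretized directions in $\Gamma(Q,m_0)$ and $O(N^{m_0})$ translates, each failing with probability at most $q^{cN^{m_0}}$. This yields Proposition~\ref{probgood} with $p_g\to 1$.

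Strongly $i$-deleted cubes make the deletion robust when the curve is replaced by an approximating, properly intersecting line. Finally, instead of a uniform covering estimate, the paper shows that the set of points lying in $m_0$-good squares (with good neighbours) at only finitely many scales has Hausdorff dimension less than $1$. Hence $\H^1$-almost every point of any $C^1$ curve sees a fixed proportion of the curve removed at infinitely many scales, which forces $\H^1(\gamma\cap E)=0$.
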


Taking into consideration Corollary \ref{1impliesk} we also obtain: 

\begin{corollary}[Corollary 4.16 of \cite{Buirr}]\label{kunrectifiable}
Let $k\in\N$. For all $0\le p<1$, the set $E(\omega)$ is purely 
$k$-unrectifiable for $\P$-almost all $\omega\in\Omega$.
\end{corollary}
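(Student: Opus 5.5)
Corollary~\ref{kunrectifiable} follows immediately from Corollary~\ref{mainstandard} and Corollary~\ref{1impliesk}. Fix $k\in\N$ and $0\le p<1$. By Corollary~\ref{mainstandard} there is a set $\Omega'\subset\Omega$ with $\P(\Omega')=1$ such that $E(\omega)$ is purely $1$-unrectifiable for every $\omega\in\Omega'$. For each such $\omega$, the set $E(\omega)$ is compact, being an intersection of finite unions of closed cubes, hence Borel. So Corollary~\ref{1impliesk} applies and shows that $E(\omega)$ is purely $k$-unrectifiable. Since this holds for every $\omega\in\Omega'$, the statement holds $\P$-almost surely. Because $k$ is arbitrary and $\Omega'$ does not depend on $k$, we even get a single full-measure event on which $E(\omega)$ is purely $k$-unrectifiable for all $k$ simultaneously.

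For completeness, here is how I would justify Corollary~\ref{1impliesk} via Theorem~\ref{FM}, since that is the only real content. Let $M$ be a $k$-dimensional $C^1$-submanifold. Locally, after a rotation, $M$ is the graph of a $C^1$ map over an open set $U\subset\R^k$, so a bi-Lipschitz chart $\phi\colon U\to M$ is available. Each slice of $U$ by a line parallel to the $e_1$-direction maps to a $C^1$ curve, which is a $1$-rectifiable set. Hence $\H^1(F\cap\phi(\text{line}))=0$ by pure $1$-unrectifiability. Pulling back, $\H^1(\phi^{-1}(F)\cap\text{line})=0$ for every such line. Since $\phi^{-1}(F)$ is Borel, Fubini gives $\mathcal L^k(\phi^{-1}(F))=0$. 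The bi-Lipschitz property then yields $\H^k(F\cap\phi(U))=0$. Covering $M$ by countably many charts gives $\H^k(M\cap F)=0$, and Theorem~\ref{FM} completes the argument.

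The only points needing care are measurability, which is handled by compactness of $E(\omega)$, and the fact that on each slice, $\H^1$ of the curve image controls the one-dimensional Lebesgue measure through the bi-Lipschitz constant. Neither step is a genuine obstacle. All the difficulty is in Corollary~\ref{mainstandard}, which we take as given.
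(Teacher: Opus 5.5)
Your proposal is correct and follows the paper's route exactly: the statement is obtained by combining Corollary~\ref{mainstandard} with Corollary~\ref{1impliesk}. Your justification of the latter via Theorem~\ref{FM} is precisely the ``simple consequence of Theorem~\ref{FM} and Fubini's theorem'' the paper has in mind: slice a local graph chart of $M$ by lines, apply Fubini's theorem, and note that $E(\omega)$ is compact, hence Borel. One small point: a $C^1$ graph chart is bi-Lipschitz only after shrinking $U$ to a ball with compact closure in its domain, and your countable cover of $M$ by charts already allows for this.
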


The proof of Theorem \ref{maingeneral} in \cite{Buirr} is very complicated with many technical details. Since Corollary \ref{mainstandard} is interesting in itself in \cite{Buirr} we gave a simpler proof of this special case, but even this simpler proof contains many technical details. To make the idea of the proof of Corollary \ref{mainstandard} more transparent here we give a further simplified heuristic argument.

\begin{figure}[ht!]
\centering
\includegraphics[height=0.45\linewidth]{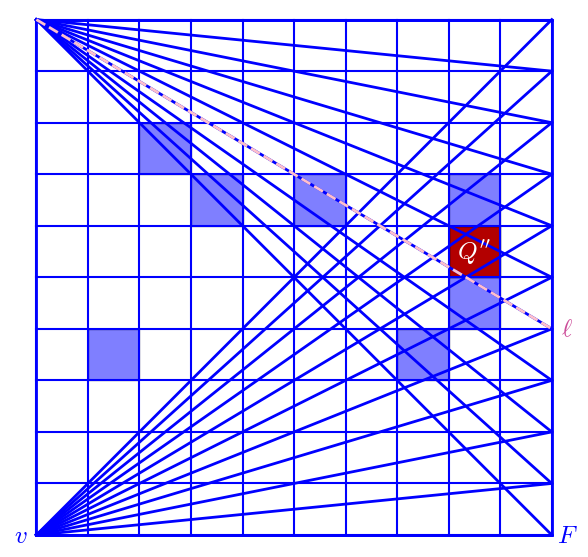}
\includegraphics[height=0.45\linewidth]{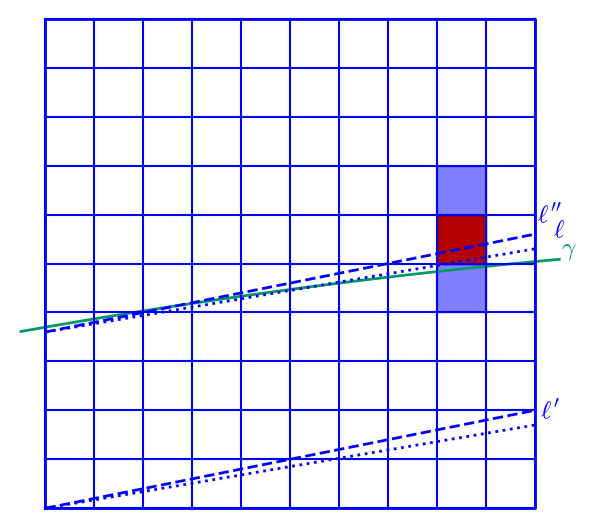}
\captionsetup{width=0.9\textwidth}
\caption{Subdivided squares, $N=10$, $m_{0}=1$, deleted squares are dark}
\label{fig-fpcsquares1}
\end{figure}

\begin{proof}[Simplified heuristic argument for Corollary \ref{mainstandard}]  
We discuss only the planar, $d=2$ case. In this case the $d$-dimensional cubes are squares.
We will often use the abbreviated notation $E$ for the percolation set $E(\ooo)$.
 In this heuristic argument we denote by $c$ a constant which depends only on the probability $p$ and to avoid clumsy indexing from line to line the value of $c$ might change but we keep the same notation, $c$. It may also happen that in the same formula $c$ appears with different ``meaning''. 
 
The key idea is discretization of directions and linear approximation of $C^{1}$ curves.

 We want to prove that conditioned on non-extinction for almost every $\ooo\in\OOO$ 
 the fractal percolation set $E=E(\ooo)$ intersects every $C^{1}$ curve in a set of $\cah^{1}$-measure $0$.

For the proof a sufficiently large $m_{0}$ is chosen and fixed. We suppose that $Q$ is a grid square at a level $m$ that is $Q\in \caq_{m}$. We consider the subgrid squares $Q'\in \caq_{m+m_0}$ at level $m+m_0$.
 From this point on Figure \ref{fig-fpcsquares1} can be useful. On both sides the subdivided square $Q$ is shown. 
The deleted squares are in dark color, blue or red.
On both halves of Figure \ref{fig-fpcsquares1} there is a configuration of three deleted squares vertically on top of each other and the middle square is colored by red. 
We will refer to these grid squares as the red squares and denote the one on the left half of the figure by $Q''$.
 On black and white printed versions of this paper the red squares are darker, in the text we refer to these darker squares still as ``red''.
On the left half of the figure there is a pink dashed line denoted by $\ell$. 
Its {\it principal direction} is horizontal, which means that the angle it makes with a horizontal line is no more than the angle it makes with a vertical line. The principal direction is not necessarily unique, for example the principal direction of the diagonal of $Q$ can be simultaneously vertical and horizontal.
In this proof horizontal ($x$-axis) principal direction is referred to as direction $1$,
vertical ($y$-axis) direction is $2$.
 
On the left side of Figure \ref{fig-fpcsquares1}   the line $\ell$ intersects the red square of side length $1/N^{-m-m_0}$ but the length of the part of $\ell$ in the red square, $Q''$ is a small portion of $1/N^{-m-m_{0}}$. Since the principal direction of $\ell$ is horizontal we consider $\Pi_{1}(\ell\cap Q'')$, where $\Pi_{1}$ denotes the projection onto the $x$-axis (and $\Pi_{2}$ is the projection onto the $y$-axis). We say that a grid square $S\in \caq_{m+m_0}$ is {\it intersected properly }  by a line $\ell'$ with principal direction $i$ if $\Pi_{i}(\ell'\cap S)> {2^{-1}N^{-m-m_{0}}} $. 
In \cite{Buirr} a stronger property ``intersected very properly'' is also introduced, but for this heuristic argument we skip this definition.
The line $\ell$ is not intersecting properly the red square, but it does the blue square under the red square. If a line $\ell$ crosses $Q\in \caq_{m}$ horizontally and properly intersects a deleted sub-grid square at level $\caq_{m+m_0}$ then this deleted sub-grid square removes a portion of $c \cdot N^{-m_{0}}$ of $\ell\cap Q$. If this happens repeatedly at ``all'' (or at ``most'') deeper zoom levels then $\cah^{1}(\ell\cap E)=0$, since again and again a certain fixed portion of the interval $\ell \cap Q$ belongs to deleted squares. 

We need to verify that $\cah^{1}(E\cap \ggg)=0$ for all $C^{1}$ curves $\ggg$. A $C^{1}$ curve $\ggg$ at a sufficiently deep zoom level looks like a line. On the right half of Figure \ref{fig-fpcsquares1}  one can see an ``almost linear'' portion of a curve $\ggg$ crossing the square $Q$. It can be approximated well in the Hausdorff metric by the line $\ell$. 
Of course, due to approximation it may happen that the red square removes a certain portion of $\ell$ but the closely approximating curve $\gamma$ intersects only the nearby blue square,  the one under the red one. If we are lucky, that square is also deleted. This motivates the following definition. A square in $Q$ is strongly $1$-deleted if the squares above and below are also deleted. So on both halves of the figure the red squares are strongly $1$-deleted, but these are the only ones. Considering horizontal neighbors  one can define strongly $2$-deleted squares as well.
The probability that a square is deleted is $1-p$, the probability that it is strongly $1$-(or $2$)-deleted is at least $(1-p)^{3}$ since the deletion of grid squares at a certain zoom level is an independent event.

We do not know in advance the direction of the line $\ell$ which approximates $\ggg$ so we have to take care all possible directions simultaneously.
We want that any line with principal direction $i$, properly intersecting $Q$ hits a strongly $i$-deleted cube. 

We say that  $Q$  is
{\it $m_0$-good for the line $\ell$} if there is a strongly $i$-deleted cube
$Q'\in\Q_{n+m_0}(Q)$ so that
$\ell$ intersects $Q'$ properly and is ``not too close'' (details of this are in \cite{Buirr}) to the sides of the square $Q$ perpendicular to the direction $i$.

Discretization of directions means that we take the four vertices of $Q$. For example we take the bottom left vertex, denoted by $v$ on the left side of  Figure \ref{fig-fpcsquares1}, and we connect it to the grid points on the opposing side of $Q$. 
This side is denoted by $F$ on Figure \ref{fig-fpcsquares1}.
The set of line segments obtained this way is denoted by 
$\Gamma_{F,v}(m_0)$, and equals the discretized set of  line segments with principal direction $1$ going through $v$ on Figure \ref{fig-fpcsquares1}. We take the union of these
line segments 
\[
\Gamma(Q,m_0):=\bigcup_{F}\bigcup_{v}\Gamma_{F,v}(m_0),
\]
where the first union is over all sides (faces) $F$ of $Q$, the 
second one is over all vertices of $Q$ not contained in $F$. On the figure we show both systems of lines corresponding to the face $F$.

We call
a square $Q\in\Q_n$ {\it $m_0$-good}
if it is $m_0$-good for all lines which intersect $Q$ properly
and are parallel to some line in $\Gamma(Q,m_0)$. Finally, a square $Q\in\Q_m$ is
{\it $m_0$-bad} if it is not $m_0$-good.

In our two-dimensional case we have on each side $N^{m_0}+1$ many grid points, $4$ sides and for each side $2$ opposing vertices. That is we have a collection of $8 \cdot (N^{m_0}+1)\leq c \cdot N^{m_{0}}$ many lines in $\Gamma(Q,m_0)$. 

  Assume that a line $\ell$ 
with  a  principle direction $i\in\{1,2\}$
intersects $Q$ properly. 

Recalling that for every $Q'\in\Q_{m+m_0}$, 
\[
\P(\{\omega\in\Omega : Q'\text{ is strongly }i\text{-deleted}\})
  \ge (1-p)^{3},
\]
we obtain that
\[
\P(\{\omega\in\Omega: Q'
\text{ is not strongly }
 i\text{-deleted}\})
  \le 1-(1-p)^{3}=:q.
  \]

\begin{figure}[ht!]
\centering
\includegraphics[width=0.45\linewidth]{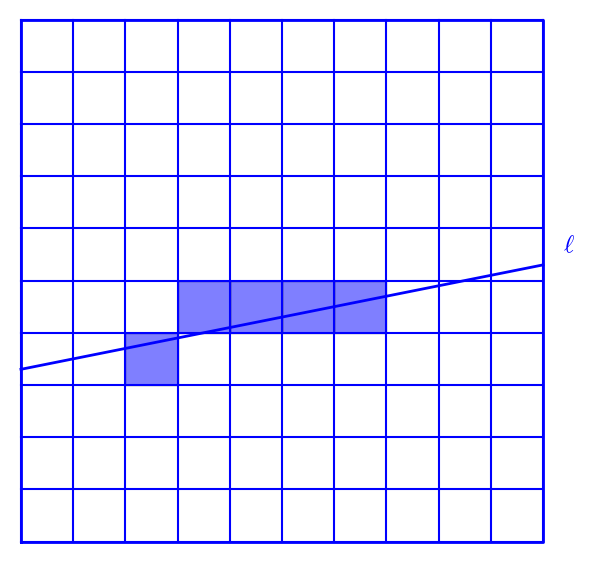}
\includegraphics[width=0.4\linewidth]{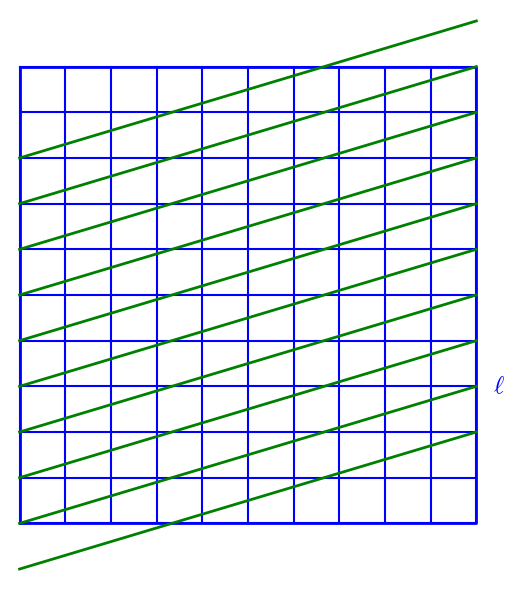}
\captionsetup{width=0.9\textwidth}
\caption{Left: $\K_{m_0}^Q(\ell)$, Right: discretized translated copies of $\ell$.}
\label{fig-skqm}
\end{figure} 

We denote by $\K_{m_0}^Q(\ell)$ a system of 
$m+m_{0}$ level subcubes  $\widetilde{Q}$ of $Q$ such that each $\widetilde{Q}$ is (very) properly intersected by $\ell$, see the left side of Figure \ref{fig-skqm}. Elements of $\K_{m_0}^Q(\ell)$ are in different vertical, or horizontal layers depending on the principal direction of $\ell$ and the number of elements of $\K_{m_0}^Q(\ell)=c \cdot N^{m_{0}}$.
By using independence of deletion properties of squares in different horizontal or vertical layers we obtain that
\[
\P(\{\omega\in\Omega : \text{ every }Q'\in\K_{m_0}^Q(\ell)\text{ is not strongly }
 i\text{-deleted}\})\le q^{c \cdot N^{m_0}}.
\]
Note that $Q$ is $m_0$-good for $\ell$, if at least one of the cubes
$Q'\in\K_{m_0}^Q(\ell)$ is strongly $i$-deleted. Thus,
\[
\P(\{\omega\in\Omega : Q\text{ is }m_0\text{-good for }\ell\})
  \ge 1-q^{c \cdot  N^{m_0}}.
\]

 Now we take a fixed line  $\ell$, from $\Gamma_{F,v}(m_0)$ and consider its translates which properly intersect $Q$. Again we discretize and we regard
translated copies of $\ell$ by a vertical vector of length $N^{-m-m_{0}}$, the side length of one small grid square on the figure, see the right side of Figure \ref{fig-skqm}. We have no more than $c \cdot N^{m_{0}}$ such translates which properly intersect $Q$.  

\begin{proposition}[Proposition 4.9 of \cite{Buirr}]\label{probgood}
For all $0\le p<1$ and $m_0\in\N$, there exists a number
$0\le p_g=p_g(m_0,p,N)\le 1$ with
$\lim_{m_0\to\infty}p_g(m_0,p,N)=1$ such that 
\[
\P(\{\omega\in\Omega: Q\text{ is }m_0\text{-good}\})\ge p_g
\]
for all $n\in\mathbb N$ and $Q\in\Q_m$.
\end{proposition}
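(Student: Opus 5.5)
The plan is a union bound over the finitely many discretized lines combined with the single-line estimate derived just above. Fix $Q\in\Q_m$ and $m_0$.

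\textbf{Step 1: reduce to a finite family of test lines.} I reduce ``$Q$ is $m_0$-good for all lines properly intersecting $Q$ and parallel to some line of $\Gamma(Q,m_0)$'' to a finite family $\mathcal L(Q,m_0)$. This family consists of each of the at most $c\cdot N^{m_0}$ directions in $\Gamma(Q,m_0)$, combined with each of the at most $c\cdot N^{m_0}$ translates by multiples of $N^{-m-m_0}$ (vertical translates for principal direction $1$, horizontal ones for direction $2$) that properly intersect $Q$. The key geometric claim is the following. Any line $\ell'$ parallel to a line of $\Gamma(Q,m_0)$ and properly intersecting $Q$ lies within vertical (resp. horizontal) distance $N^{-m-m_0}$ of some $\ell\in\mathcal L(Q,m_0)$. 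I then choose the system $\K_{m_0}^Q(\ell)$ so that each $\widetilde Q\in\K_{m_0}^Q(\ell)$ is very properly intersected by $\ell$, meaning with enough margin. This should ensure the following: if $\widetilde Q$ is strongly $i$-deleted, then $\ell'$ properly intersects one of the three stacked deleted squares $\widetilde Q$ or its two neighbours in the direction perpendicular to $i$. The not-too-close-to-the-sides condition also passes from $\ell$ to $\ell'$. So if $Q$ is good for every $\ell\in\mathcal L(Q,m_0)$, then $Q$ is $m_0$-good. This is precisely why strong deletion, rather than plain deletion, was introduced.

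\textbf{Step 2: probability bound for a single line.} For one $\ell\in\mathcal L(Q,m_0)$, I take $\K_{m_0}^Q(\ell)$ to contain at least $c N^{m_0}$ squares. I choose them in layers spaced at least three apart, so that their three-square stacks are disjoint. The events ``$\widetilde Q$ is strongly $i$-deleted'' are then independent across these squares, since they depend on the values of $\omega$ at disjoint vertex sets of level $m+m_0$. Each such event fails with probability at most $q=1-(1-p)^3<1$, because $p<1$. Hence
\[
\P(Q\text{ is not }m_0\text{-good for }\ell)\le q^{cN^{m_0}}.
\]

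\textbf{Step 3: union bound.} Summing over the family gives
\[
\P(Q\text{ is }m_0\text{-bad})\le \#\mathcal L(Q,m_0)\, q^{cN^{m_0}}\le c\,N^{2m_0}q^{cN^{m_0}}.
\]
So I define $p_g(m_0,p,N):=\max\{0,\,1-cN^{2m_0}q^{cN^{m_0}}\}$. This does not depend on $m$ or on $Q$, by self-similarity of the grid. It tends to $1$ as $m_0\to\infty$, because $q^{cN^{m_0}}$ decays doubly exponentially while $N^{2m_0}$ grows only exponentially. The case $p=0$ is trivial.

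The main obstacle is Step 1. It must be made quantitatively precise that the constants in ``properly'', ``very properly'' and ``not too close to the sides'' are compatible: the discretization error of order $N^{-m-m_0}$ in position, and the angular error, must both be absorbed by the three-square stack. I would attempt this with explicit thresholds, for instance very proper meaning a projection larger than $\tfrac34 N^{-m-m_0}$ and at distance at least $2N^{-m-m_0}$ from the relevant sides.
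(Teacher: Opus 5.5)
Your Steps 2 and 3 match the paper's estimate, but the reduction in Step 1 does not give what you need. By definition, $Q$ is $m_0$-good for an admissible line $\ell'$ only if $\ell'$ properly intersects a \emph{strongly} $i$-deleted square. From ``some $\widetilde Q\in\K_{m_0}^Q(\ell)$ is strongly $i$-deleted'', your perturbation argument only yields that $\ell'$ properly intersects $\widetilde Q$ or one of its two neighbours in the stack. Those neighbours are merely deleted, since nothing is known about their own outer neighbours, so they need not be strongly $i$-deleted.

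Nor can you force $\ell'$ to hit $\widetilde Q$ itself with translation step $N^{-m-m_0}$. Take the threshold $\frac12 N^{-m-m_0}$ in the definition of a proper intersection and a direction of slope $s\in(0,1]$. The offsets for which a line properly intersects a fixed square then form an open interval of length exactly $N^{-m-m_0}$. Hence no square is properly intersected by all lines within distance $\frac12 N^{-m-m_0}$ of a given net line. Your suggested threshold $\frac34 N^{-m-m_0}$ only guarantees robustness under shifts of size about $\frac{s}{4}N^{-m-m_0}$.

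The three-square stack is designed for the later step, where a $C^1$ curve is compared with an approximating line. It plays no role in discretizing the lines in the definition of $m_0$-good. Also, there is no angular error to absorb: admissible lines are exactly parallel to lines of $\Gamma(Q,m_0)$.

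The repair is what the paper does, and it needs no perturbation. If $Q$ is bad for an admissible $\ell$, then no square of $\K_{m_0}^Q(\ell)$ is strongly $i$-deleted, and this event depends only on the collection $\K_{m_0}^Q(\ell)$. There are uncountably many admissible lines, but these collections take at most $N^{cm_0}$ distinct values. The reason is that for each of the at most $cN^{m_0}$ directions, the square selected in a given layer changes only at polynomially many (in $N^{m_0}$) critical offsets. A union bound over the distinct collections gives $\P(Q\text{ is }m_0\text{-bad})\le cN^{cm_0}q^{cN^{m_0}}$, and your Step 3 then goes through verbatim.

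Simply refining your uniform net is not automatically a fix. Just past a critical offset, a line intersects its square only barely properly, so there is no margin to transfer from a nearby net line. You need at least one representative in each cell between consecutive critical offsets, i.e.\ one per combinatorial type of $\K_{m_0}^Q(\ell)$, which is exactly the paper's count.
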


This proposition is the exact statement of the fact that if $m_{0}$ is large then it is very likely that a line hits a strongly deleted square. Of course, the larger $m_{0}$, the proportion removed from the line segment is smaller. However, if it repeats infinitely often then using the fact
that $\eta^{n}\to 0$ as $n\to\oo$ 
 for any $0<\eta<1$  one can still obtain that $E$ intersects a line, or a curve well approximated by a line, in a set of one-dimensional Hausdorff measure zero.

\begin{proof}[Proof sketch]
If $Q$ is not $m_0$-good, then there are $i\in\{1,\dots,d\}$, a face $F$ of $Q$
perpendicular to the $i$-th coordinate axis, a vertex $v$ of $Q$ not contained
in $F$ and a line $\ell$ 
with  a  principle direction $i$
intersecting $Q$ properly 
such that
$\ell$ is parallel to some $\ell'\in\Gamma_{F,v}(m_0)$
and $Q$ is $m_0$-bad for $\ell$. In particular, every $Q'\in\K_{m_0}^Q(\ell)$
is not strongly 
 $i$-deleted. The number of different
collections $\K_{m_0}^Q(\ell_b)$ is at most 
$N^{c \cdot m_0}$.
Recalling that we have already seen that
$
\#\Gamma(Q,m_0)\le  c \cdot N^{m_0},
$
 we obtain that
\begin{align*}
\P(\{\omega\in\Omega : Q \text{ is }m_0\text{-bad}\})
&\le c \cdot N^{c \cdot m_0}
q^{c \cdot N^{m_0}}
=:s\to 0, \text{ as }m_{0}\to\oo
\end{align*}
the claim follows with $p_g:=\max\{1-s,0\}$.
\end{proof}

It would be nice if our arbitrary $C^{1}$ curve would intersect at any zoom level only $m_{0}$-good squares. Since with positive probability a square can be $m_{0}$-bad it is almost surely impossible. But we can use the above fact that with a probability $p_{g}$ very close to $1$ we have $m_{0}$-good squares. If this probability is very close to $1$ then the Hausdorff dimension is less than one for the set of those points which are not at infinitely many zoom levels in $m_{0}$-good squares. (In  \cite{Buirr} for some technical reasons we also need to assume that the neighbors of these squares are  also $m_{0}$-good, but by this technical detail we will not want to make this simplified argument more complicated.)
Using the part of  $\ggg$ which is not in the exceptional set of Hausdorff dimension less than one it can be verified that $\cah^{1}(\ggg\cap E)=0$.
\end{proof} 

 \subsection{The general H\"older-$\aaa$ case} \label{secghold} 
Theorem \ref{maingeneral} is the result which we discuss in this section. Its proof in \cite{Buirr} is lengthy with many hard to follow technical details. In this survey paper we omit most of the details and we just outline some ideas behind this proof.
Again we discuss and illustrate the two-dimensional case.
While we zoom in, differentiable curves 
 start  to look almost like lines and, therefore,  the little squares in 
$m_0$-good squares removed during the fractal percolation process  guarantee 
that  differentiable curves cannot intersect the fractal percolation set in 
a set of positive linear measure.
\begin{figure}[ht!]
\centering
\includegraphics[height=0.48\linewidth]{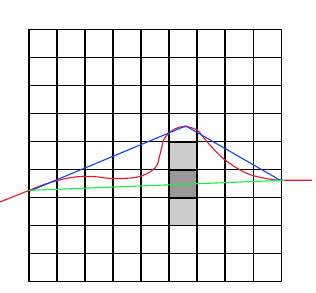}
\includegraphics[height=0.45\linewidth]{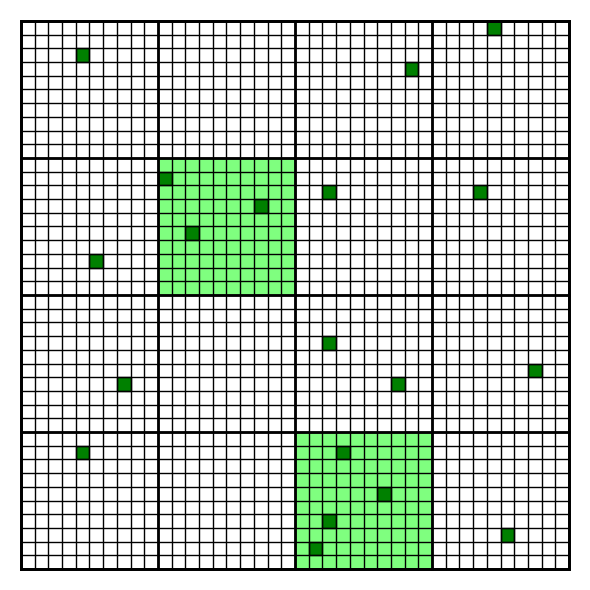}
\captionsetup{width=0.9\textwidth}
\caption{Left: curves going around a deleted square Right: nested good and bad squares, $\mfc=2$.}
\label{fig-fpc39c}
\end{figure} 

Main difficulty of the general Hölder-$\aaa$ case:
 We cannot use tangent lines and linearization. 
We can use broken line approximations. 
If $\gamma$ passes through an 
$m_0$-good square  and is close to the fractal 
percolation set, it has to go around a strongly deleted square, or part of it is removed (see left half of Figure \ref{fig-fpc39c}). Going around  a deleted square increases 
the arc length slightly 
compared to a straight line. If we increase the arc length at many different zoom levels then this might lead to the fact that the curve is not Hölder-$\aaa$ any more. We want that at most zoom levels, most of the points of the curve are in $m_{0}$-good squares. The difficulty one needs to deal with is the fact that at  all zoom levels with positive probability there are $m_{0}$-bad squares. Due to independence
 with high probability these bad squares are sufficiently uniformly spread. 
It is a delicate issue in the proof in \cite{Buirr} to select a correct zooming rate. 
If we zoom out too quickly we might end up with ``larger squares'' which contain too many 
``smaller bad squares'' at a deeper zoom level, while if we zoom out too slowly we might not have a sufficiently quick increase of arc length. 

The key definition in \cite{Buirr} is the 
definition of $(\bL,\mfc,m_0)$-good squares.
 (This is the sequence of different zoom levels we consider.)

Fix $\omega\in\Omega$ and $n\in\N$. Let $\mfc,m_0\in\N$. Assume
that $\bL:=(L_j)_{j=0}^k$ is a strictly decreasing finite sequence of  $k$ many
nonnegative
integers such that $L_k=0$. (This is the sequence of different zoom levels we consider.)

Let $Q\in\Q_n$. We say that $Q$ is
{\it $(\bL,\mfc,m_0)$-good} if it is $(k,\bL,\mfc,m_0)$-good, where the concept
of being $(k,\bL,\mfc,m_0)$-good is defined inductively. If $Q$ is not
$(\bL,\mfc,m_0)$-good, it is {\it $(\bL,\mfc,m_0)$-bad}.

If $k=0$, we say that $Q$ is {\it $(0,\bL,\mfc,m_0)$-good} if it is $m_0$-good.
Otherwise, $Q$ is {\it $(0,\bL,\mfc,m_0)$-bad}. (This is the deepest zoom level. At this level we start with $m_{0}$-good squares.)

Now we skip some technical details of the definition of $(k,\bL,\mfc,m_0)$-good squares.

The goal is that in the end of the inductive definition we have the properety that if $Q\in\Q_n$ is $(k,\bL,\mfc,m_0)$-good, then all but $\mfc$ cubes
$Q'\in\Q_{n+L_{k-1}}(Q)$  are
$(k-1,\bL,\mfc,m_0)$-good. This, in turn, means that all except $\mfc$ cubes
$Q''\in\Q_{n+L_{k-2}}(Q')$ are $(k-2,\bL,\mfc,m_0)$-good, etc.

See the right half of Figure \ref{fig-fpc39c}. The bad squares are shaded in green  and on the figure, a larger square is considered bad if it contains more than two bad smaller squares, so this corresponds to the case $\mfc=2$.

This concludes our quick sketch of ideas behind the proof of Theorem \ref{maingeneral}. 

%
%
%

\section{Regular and irregular sets on the graphs of continuous functions} \label{secregoncont}

Next we will deal with planar irregular and regular $1$-sets obtained by considering graphs of continuous functions. In the first subsection we look at  the Takagi function.

\subsection{ Regular and irregular $1$-sets on the graph of the Takagi function.} \label{subsectak}

\begin{figure}[ht!]
\centering
\includegraphics[width=0.95\linewidth]{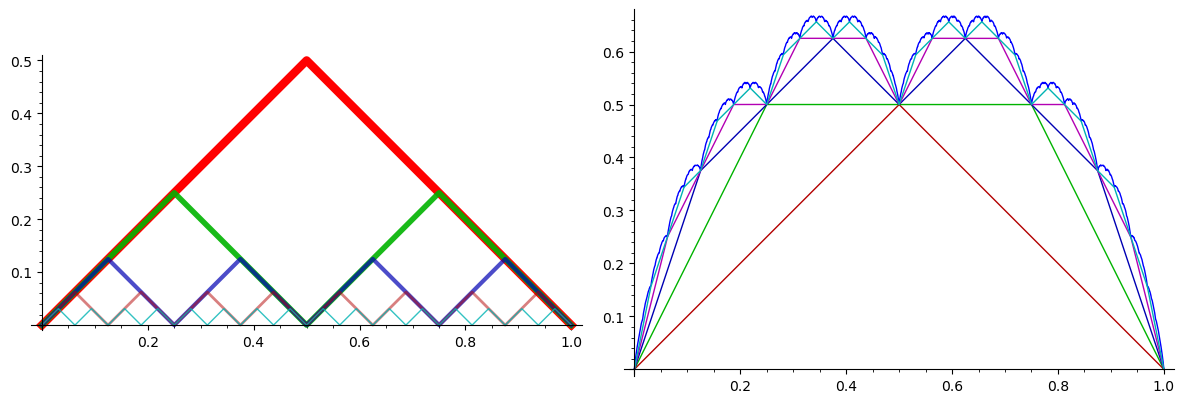}
\captionsetup{width=0.9\textwidth}
\caption{The Takagi function and it partial sums}
\label{fig-taka}
\end{figure}

The Takagi function $T:[0,1]\to \mathbb{R}$ is a classical example of a continuous nowhere differentiable function.

 Let $D$ be the set of dyadic rationals, that is let {$$\ds  
 D_n:=\left \{\frac{k}{2^{n-1}}: k=0,1, \ldots, 2^{n-1}\right \},\quad D:=\bigcup_n D_n. 
$$}
Let $G_n = g_1+\cdots+g_n$ and denote the distance from the point $x$ to the set $D_{n}$ by $ g_n(x)=\text{dist}\, (x,D_n)$. 
The Takagi function is defined as 
\begin{equation}\label{*takdef} 
\ds  
 T(x)=\sum_{n=1}^{\infty} g_n(x)=\lim_{n\to \infty}\, G_n(x).
\end{equation}
On the right side of Figure \ref{fig-taka} one can see the Takagi function and its partial sums 
$n=1,...5,$ on the left side of this figure one can see the functions $g_{n}$ for these $n$s. 

The Takagi function was introduced by {T.Takagi},
\cite{Tak1903} in 1903 as an example 
of an everywhere non-differentiable function on $[0, 1].$  
This function was rediscovered by many people.
In 1930 {van der Waerden} studied in \cite{vdW1930} a decimal version of this function and proved its non-differentiability.
In 1933 {Hildebrandt}, \cite{Hil1933} simplified his construction to rediscover the Takagi function.

In the 1950s {G. de Rham} \cite{dRh1957b,dRh1957a} considered self-similar constructions 
of geometric objects, again constructing a function equivalent to the Takagi function. 
In 1959 {Kahane} \cite{Kah1959} noted that the level set 
$T^{-1}(2/3)$ was a perfect, totally disconnected set of Lebesgue measure $0,$ and in 1984 {Baba} 
\cite{Bab1984}
showed that {$T^{-1}(2/3)$ has Hausdorff dimension $1/2 .$} 
The importance of the level set $T^{-1}(2/3)$ is the fact that $2/3$ is the maximum of the Takagi function. Learning Calculus one got used to the fact that for smooth functions the points with local maxima and minima are usually isolated and the corresponding level sets are finite. For the fractal function $T(x)$ it is just the opposite. In fact, the level set corresponding to $2/3$ is its largest measure level set
this was proved by de Amo, Bhouri,  D\'iaz Carrillo and Fern\'andez-S\'anchez  in \cite{ABCF}. 
 
My study of the Takagi function started with 
the following problem I proposed for the
Mikl\'os Schweitzer Mathematical competition
of the J\'anos Bolyai Mathematical Society in 2006:
 
{\it Suppose that $f(x)=\sum_{n=0}^{{\infty}}2^{-n}||2^{n}x||,$
where $||x||$ is the distance of $x$ from the closest integer
(that is, $f$ is Takagi's function). What can we say
for Lebesgue almost every $y\in f( {\ensuremath {\mathbb R}})$
about
the cardinality of the level set $$L_{y}= \{x\in[0,1]:f(x)=y  \}?$$}

The surprizing answer is that it is finite. 

Level sets of Takagi's function were further studied by several authors see for example the papers \cite{LagMada,LagMadb}  J. Lagarias and Z. Maddock,
and the papers \cite{All2012,MR3016850,MR3122292,MR2869811,
MR3254592,MR3225427} by P. Allaart and K. Kawamura.
See also the recent paper of R. Anttila, B. B\'ar\'any and A. K\"aenm\"aki \cite{AnttilaBaranyKaenmaki2023}.

By Besicovitch's theorem \cite{Besi} on the
projection of irregular $1$-sets
the projection of an irregular $1$-set $S$
in almost all directions is of zero Lebesgue measure, see also
 Theorem 6.13 in \cite{Fal1986}. This latter book contains a very nice introduction to the theory of regular and irregular $1$-sets.

 By the Bolzano-Darboux property  there can be at most one direction in which the projection of
 the graph of a continuous function defined on a non-degenerate interval  does not contain an interval.
 Hence this graph
 cannot be an irregular $1$-set. However, 
quite often one can find in a natural way on the graph of some continuous fractal
functions some irregular $1$-subsets. During the rest of this subsection we will mainly discuss results from \cite{Buirra}. 

The next theorem is about the decomposition of the graph of the Takagi function into regular and irregular $1$-sets.

In this section we will denote by  $\Pi_{x}$ and $\Pi_{y}$ the projections onto the $x$ and $y$ axes.

\begin{theorem}[Part of Theorem 9 in \cite{Buirra}]\label{thtakirrdec} 
There are two disjoint sets  $S_{irr}$ and $S_{reg}$ such that  
 $$S_{irr} \cup S_{reg}= \{(x; {{T}}(x)):x\in [0,1]  \},$$ where $T$ is the Takagi
function.

The set $S_{irr}$
is an irregular $1$-set such that ${\mathcal {L}} (\Pi_{x}(S_{irr}))=1$ and
${\mathcal {L}} (\Pi_{y}(S_{irr}))=0.$

The set
$S_{reg}$ can be covered by the graphs of countably
many monotone increasing functions.
Almost every level set of $T(x)$ is finite.
\end{theorem}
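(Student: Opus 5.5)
The plan is to work with the binary digits $x=0.d_1d_2\ldots$ and the slopes of the partial sums. On each dyadic interval $I$ of length $2^{-n}$, $G_n$ is affine with slope $s_n(x)=\sum_{k\le n}(1-2d_k)$; this is a simple symmetric random walk in the digits. The key self-affinity identity is
\[
T(x)=G_n(a_I)+s_n(a_I)(x-a_I)+2^{-n}T\bigl(2^n(x-a_I)\bigr),\qquad x\in I,
\]
where $a_I$ is the left endpoint of $I$. We then define
\[
S_{irr}:=\{(x;T(x)): s_n(x)=0 \text{ for infinitely many } n\},
\]
and let $S_{reg}$ be the rest of the graph. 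Since the simple random walk is recurrent, $\mathcal L(\Pi_x(S_{irr}))=1$.

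\textbf{Step 1: $\mathcal L(\Pi_y(S_{irr}))=0$.} Let $A=\Pi_x(S_{irr})$ and $\mu=\mathcal L(T(A))$. Decompose $A$ (up to a null set) according to the first return time $n\ge 2$ of the walk to $0$. On each first-return interval $I$ the slope vanishes, so $T(A\cap I)=G_n(a_I)+2^{-n}T(A)$. This gives $\mu\le\sum_I 2^{-n(I)}\mu=\mu$. The gain comes from overlaps. By the symmetry $T(x)=T(1-x)$, the two level-$2$ first-return intervals $[1/4,1/2]$ and $[1/2,3/4]$ have identical images $G_2(1/4)+\tfrac14T(A)$. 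Counting this image only once gives $\mu\le\mu-\mu/4$, hence $\mu=0$. Because the first-return decomposition only needs $\mathcal L$-a.e. $x$, the null exceptional set is harmless: its image under $T$ also needs control, and I would handle that by applying the same argument to the set of $x$ with infinitely many returns directly, with no almost-everywhere reduction.

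\textbf{Step 2: $S_{reg}$ lies on countably many monotone graphs.} If $x\notin A$, then there is a last zero time $N$ and a dyadic interval $I$ of level $N$ such that $s_n(x)$ has constant sign for all $n>N$. Let $B_{I,+}$ be the set of such $x$ in $I$ with positive sign. For $x<x'$ in $B_{I,+}$, let $k>N$ be their first differing digit. Expanding via the identity above at level $k$ and using $s_k>0$ together with the positivity of the tail walks, I would verify $T(x)\le T(x')$; the needed inequality reduces to comparing two scaled Takagi tails. Thus $T|_{B_{I,+}}$ is increasing, and extending it monotonically to $I$ covers that piece by a monotone graph. Negative pieces are handled by the reflection $x\mapsto 1-x$, which is presumably why the statement can say ``increasing''. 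Monotone graphs are rectifiable curves, so $S_{reg}$ is regular.

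\textbf{Step 3: irregularity and finiteness of level sets.} For irregularity, I would show $0<\mathcal H^1(S_{irr})<\infty$ and that the lower density is $<1$ at every point. Around a point at a return time $n$, the graph over $I$ is a copy of the Takagi graph scaled by $2^{-n}$. Its vertical spread is comparable to its horizontal length, so it cannot look like a segment. Alternatively, note that $\Pi_y(S_{irr})$ is null while a rectifiable piece of positive measure with a non-vertical tangent would project to positive measure. I would combine this with Besicovitch's projection characterization to conclude that $S_{irr}$ is purely unrectifiable.

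For finiteness, Step 1 already shows that a.e.\ $y$ has no preimage in $A$. It remains to bound the Banach indicatrix $\int\#\bigl(T^{-1}(y)\setminus A\bigr)\,dy=\sum_{I,\pm}\mathcal L\bigl(T(B_{I,\pm})\bigr)$ and to show it is finite. This is the step I expect to be the main obstacle. The naive bound $\mathcal L(T(B_{I,\pm}))\lesssim 2^{-N}$ summed over the roughly $2^N/\sqrt N$ zero-intervals at level $N$ diverges. I would therefore estimate the image of the never-returning set more carefully, using the fact that it is $x$-null. The idea is a recursive estimate: within $I$, the set escaping from $s=\pm1$ forever has an image whose measure decays geometrically in the escape depth.
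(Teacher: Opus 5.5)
Your Step~1 is correct and is a compact variant of the paper's argument. Every $x\in A$ has a first return, so $A=\bigsqcup_I\bigl(a_I+2^{-n(I)}A\bigr)$ holds exactly, and the single coincidence $T(A\cap[\tfrac14,\tfrac12))=T(A\cap[\tfrac12,\tfrac34))$ gives $\mu\le\tfrac34\mu$. The paper instead iterates the symmetry inside the nested squares $R(x_0,\kappa)$. This yields $\mathcal L\bigl(\bigcup_{x_0}J(x_0,\kappa)\bigr)\le 2^{-\kappa}$ for every generation $\kappa$, and that summable, generation-by-generation bound is exactly what your Step~3 lacks.

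\textbf{Level sets: this step fails.} Your plan for the level sets cannot work. By Step~1, $\#(T^{-1}(y)\setminus A)=\#T^{-1}(y)$ for a.e.\ $y$. By Banach's indicatrix theorem, $\int\#(T^{-1}(y)\setminus A)\,dy$ is then the total variation of $T$ on $[0,1]$. That variation is $+\infty$, because $T$ is nowhere differentiable while functions of bounded variation are differentiable a.e. So the sum $\sum_{I,\pm}\mathcal L(T(B_{I,\pm}))$ you want to bound is infinite, and no sharper estimate of the individual terms can help.

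Finiteness has to come from a Borel--Cantelli argument on the $y$-axis, as in the paper. Over a return interval of generation $\kappa$, the graph lies in the square with vertical side $J(x_0,\kappa)$. Off the return intervals of the next generation, it coincides with a rescaled copy of $\mathbf g_1$, i.e.\ with two monotone branches. Hence, apart from countably many $y$, $\#T^{-1}(y)\le 2+2M(y)$, where $M(y)$ is the number of return squares of all generations whose vertical side contains $y$. Only finitely many squares share a given vertical side. The distinct vertical sides of generation $\kappa$ have total length at most $2^{-\kappa}$ (your symmetry trick iterated inside each rescaled copy). So $M(y)<\infty$ for a.e.\ $y$.

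\textbf{Irregularity: also a gap.} A null $\Pi_y$-projection only rules out rectifiable pieces with non-horizontal tangents. You wrote ``non-vertical'', but a piece with horizontal tangent has null $y$-projection. Besicovitch's projection theorem needs null projections in almost every direction, whereas you control only one; the symmetry behind Step~1 is lost for $T(x)+cx$.

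The horizontal case is the essential one. Since $S_{irr}$ lies on a graph and $\mathcal L(\Pi_y(S_{irr}))=0$, the area formula gives $\mathcal H^1(\gamma\cap S_{irr})\le\mathcal L(\Pi_x(\gamma\cap S_{irr}))$ for every $C^1$ curve $\gamma$. So one must prove $\mathcal L(\Pi_x(\gamma\cap S_{irr}))=0$, which is what the paper does.

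Your remark that over a return interval the graph is a full rescaled Takagi copy is the right tool, but apply it as follows. Take a Lebesgue density point $x_0$ of $\Pi_x(\gamma\cap S_{irr})$ where $\gamma$ is locally a $C^1$ graph with slope $m$ (points where $\gamma$ is vertical project to an $x$-null set). For a return time $n$ of $x_0$, rescale the level-$n$ return interval of $x_0$ to $[0,1]$. The rescaled set lies inside $\{u:|T(u)-mu-b_n|\le\epsilon_n\}$ with $\epsilon_n\to0$. Since $T(u)-mu$ is continuous and non-constant, once $\epsilon_n$ is small these sets have measure at most $1-\delta$, with $\delta>0$ independent of $b_n$. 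This contradicts density $1$. The lower-density heuristic is not needed, and $\mathcal H^1(S_{irr})\le\sqrt2$ follows by covering with the return squares.

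\textbf{Step~2.} The monotonicity of $T$ on $B_{I,+}$ is true, but your digit comparison is not carried out. A clean proof uses $x\mapsto G_{\min(n,\tau(x))}(x)$, with $\tau$ the first return time. This function is continuous and piecewise linear with slopes $\ge0$ on $[0,\tfrac12]$, and it equals $G_n$ on the never-returning set.

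The reflection $x\mapsto1-x$ does not make the decreasing pieces increasing; it only moves them to another part of the graph. A nondecreasing graph meets a nonincreasing one in a set lying on a single horizontal line. Some $B_{I,-}$ has $\mathcal L(T(B_{I,-}))>0$, so countably many increasing graphs cannot cover it. The paper's proof yields a covering by monotone graphs, and that is what is true.
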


The set $S_{reg}$ is ``responsible" for the finite 
level sets.
The set $S_{irr}$ is { an irregular (or purely unrectifiable) $1$-set.} 
This means that $\cah^{1} ( S_{irr}) $, the $ 1$-dimensional Hausdorff measure of  
$S_{irr}$, is positive and finite, moreover, $S_{irr}$ intersects every continuously  
differentiable curve in a set of $\cah^{1}$-measure zero. By Besicovitch's 
projection theorem  the projection of $S_{irr}$ in almost  
all directions is of zero Lebesgue measure. 
 
\begin{proof}[Proof sketch of Theorem  \ref{thtakirrdec}.] 
Recall the definition of the Takagi function in \eqref{*takdef}. 
Since $ { { G} }_{N}$ is piecewise linear its derivative exists
at all but finitely many points.
If $ { { G} }_{N}'(x)$ exists then {it equals zero} 
if and only if  in the dyadic expansion of $x$ {up to the $(N+1)$st 
digit the number of $0$s equals the number of 
$1$s.} 
It is well-known that for almost every $x\in [0,1]$ 
this happens 
for infinitely many $N$s.

In fact, 
the $ { { G} }'_{N}$s 
 correspond to {a symmetric random walk model,}
where {an $n$th digit $0$ means a unit step in the negative} and {an 
$n$th digit $1$ means a unit step in the positive direction.}
By  P\'olya's theorem, this random walk 
is persistent,
 see for example \cite{Spi76},  that is, 
the particle doing the random walk 
returns with probability one infinitely often to the origin. 
This means zero derivative for us for $G_{N}$ on a dyadic interval.

To give a more precise definition of these points we 
denote by $X_{1}$ the set of those $x\in [0,1]$ for which
$ { { G} }_{N}'(x)$ exists for all $N$ 
and there exists an infinite 
sequence
$N(x,1)<...<N(x,\kappa )<...$ such that 
$ { { G} }_{N(x,\kappa )}'(x)=0$ for all $\kappa \in  { \ensuremath { \mathbb N } }$, but if 
$N\not=N(x,\kappa )$ for all $\kappa $ then $ { { G} }_{N}'(x)\not=0.$ 
 
\begin{figure}[ht!]
\centering
\includegraphics[height=0.32\linewidth]{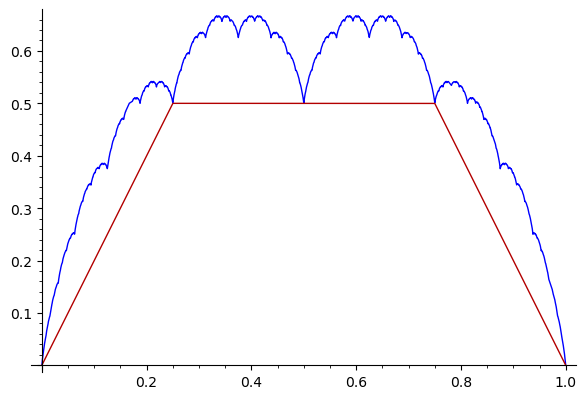}
\includegraphics[width=0.48\linewidth]{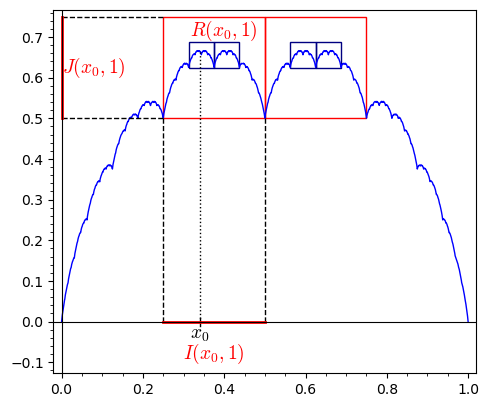} 
\captionsetup{width=0.9\textwidth}
\caption{Left: the Takagi function and $G_{2}$, Right: the square $R(x_0,1)$ on the graph of the Takagi function}
\label{fig-takR}
\end{figure}

 Suppose $ { x_ { 0 } }\in  { X_ { 1 } }$. For any $\kappa =1,2,...$ 
there exists $ { k_ { 0 } }( { x_ { 0 } },\kappa )\in  { \ensuremath { \mathbb Z } }$ such that 
$$\ds%
\label{2*081066} 
 { x_ { 0 } }\in I( { x_ { 0 } },\kappa ) { { \buildrel { \rm def } \over = } } \bigg(\frac{ { k_ { 0 } }( { x_ { 0 } },\kappa )} 
{2^{N( { x_ { 0 } },\kappa )+1}} 
,\frac{ { k_ { 0 } }( { x_ { 0 } },\kappa )+1} 
{2^{N( { x_ { 0 } },\kappa )+1}}\bigg). 
$$%
This means that $I( { x_ { 0 } },\kappa ) $ is a dyadic interval of length $2^{-(N( { x_ { 0 } },\kappa )+1)}$ on which $ { { G} }_{N(x,\kappa )}'(x)=0$.  See Figure \ref{fig-takR}. 
 
We have a rescaling property of the Takagi function, namely 
$$\ds%
\label{3*081069} 
 { { T} }( { x_ { 1 } })= { { G} }_{N( { x_ { 0 } },\kappa )}( { x_ { 0 } })+ 
2^{-N( { x_ { 0 } },\kappa )-1} { { T} } 
\left( 2^{N( { x_ { 0 } },\kappa )+1} 
( { x_ { 1 } }- 
\frac{k( { x_ { 0 } },\kappa )}{2^{N( { x_ { 0 } },\kappa )+1}})\right),
$$%
that is, over the interval $I( { x_ { 0 } },\kappa ) $ there is a scaled down vertically translated copy of the Takagi function.

We define the squares 
{$ 
 R( { x_ { 0 } },\kappa ) { { \buildrel { \rm def } \over = } } 
cl(I( { x_ { 0 } },\kappa )) { \times } J( { x_ { 0 } },\kappa ),  $}
where
 {$$  
 J( { x_ { 0 } },\kappa )= [ { { G} }_{N( { x_ { 0 } },\kappa )}( { x_ { 0 } }), 
 { { G} }_{N( { x_ { 0 } },\kappa )}( { x_ { 0 } }) 
+{\mathcal {L}} (I( { x_ { 0 } },\kappa ))].$$} 
As it can be seen on the right side of Figure \ref{fig-takR}  in these squares the Takagi function is similar to itself.

By elmentary calculations for any $ { x_ { 0 } }\in X_{1}$ 
we have 
$$\ds%
\label{2*R10} 
R( { x_ { 0 } },1) { \subset } [0,1] { \times }[0,1]. 
$$%

Using mathematical induction and rescaling properties 
 one can see that for a fixed $x_{0}$ the squares $R( { x_ { 0 } },\kappa )$ are nested, that is
{$$\ds
R( { x_ { 0 } },1)\supset 
...\supset 
R( { x_ { 0 } },\kappa ) 
\supset R( { x_ { 0 } },\kappa +1)\supset...\, . 
$$}%
The irregular set on the graph of the Takagi function is defined as $$  
 S_{irr}=\bigcap_{\kappa =1}^{ { \infty }}\bigcup_{ { x_ { 0 } }\in X_{1}} 
R( { x_ { 0 } },\kappa ).$$  
Since ${\mathcal {L}} (J( { x_ { 0 } },\kappa ))\to 0$ 
as $\kappa \to { \infty }$, 
the squares $R( { x_ { 0 } },\kappa )$ shrink to one point on the graph of Takagi's function and hence
$S_{irr}$ is a subset of the graph of $ { { T} }$.
 
By the rescaling property the squares $R( { x_ { 0 } },\kappa )$ contain,
$\ds  
  \{ (x; { { T} }(x)):x\in I( { x_ { 0 } },\kappa )  \} ,$
that is they contain the graph of $T$ over the interval $I( { x_ { 0 } },\kappa )$. 
By symmetry properties of the Takagi function 
there are two squares of the form $R(x,1)$ whose projection onto the $y$-axis equals $J(x_{0},1)$ on Figure \ref{fig-takR}. One of them is  $R(x_0,1)$ on  the figure, the other one is its symmetric image with respect to the line $x=1/2$, that is $R(1-x_{0},1)$. One can also see this latter square on the figure. Within the square $R(x_{0},1)$ the small scaled down copy of $T$ is again symmetric. On the figure the smaller square $R(x_{0},2)$ is also shown. This contains the point $(x_{0},T(x_{0}))$. By the symmetry properties now we can find four squares of the form $R(x,2)$ such that $\Pi_{y}(R(x,2))=J(x_{0},2)$ for all of them. All these four smaller squares can be seen on the figure.
Using repeatedly the symmetry properties of the Takagi function we obtain that
$$\label{*081069} 
{\mathcal {L}} ( { { T} }( { X_ { 1 } })) 
\leq {\mathcal {L}} (\Pi_{y}(S_{irr})) 
\leq {\mathcal {L}} (\cup_{ { x_ { 1 } }\in { X_ { 1 } }}J( { x_ { 1 } },\kappa )) 
\leq 2^{-\kappa }{\mathcal {L}} (\cup_{ { x_ { 1 } }\in { X_ { 1 } }}I( { x_ { 1 } },\kappa ))\leq 2^{-\kappa }. 
$$%
  Since this holds for all $\kappa $ we have 
{$$\ds%
\label{2*081069} 
{\mathcal {L}} ( { { T} }( { X_ { 1 } }))={\mathcal {L}} (\Pi_{y}(S_{irr}))=0. 
$$}%
While ${\mathcal {L}}(\Pi_{x}(S_{irr}))=1$ in \cite{Buirra} we verify that given a continuously differentiable curve $\gamma$
we have ${\mathcal {L}}(\Pi_{x}(S_{irr}\cap\ggg))=0$ this  together with $0=\lll(\Pi_{y}(S_{irr}))$ implies that $\cah^{1}(\ggg\cap S_{{irr}})=0$ and hence $S_{irr}$ is indeed an irregular $1$-set.
 
\begin{figure}[ht] 
\includegraphics[width=0.9\linewidth]{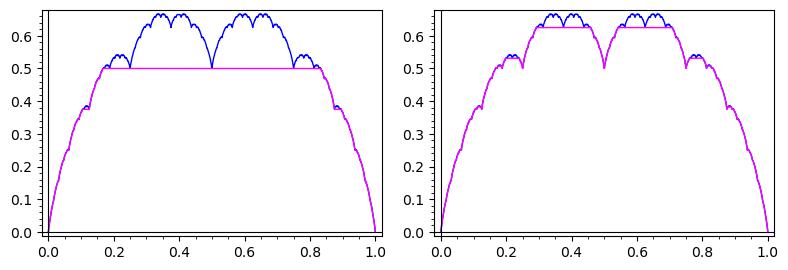} 
\caption{$\mg_1$ and $\mg_2$\label{figg1}} 
\end{figure} 

Next we turn to the regular part of the graph of the Takagi function.
Set {$ 
  \mg_{\kappa }( { x_ { 0 } }) { { \buildrel { \rm def } \over = } }  { { G} }_{N( { x_ { 0 } },\kappa )}( { x_ { 0 } }) \text{ for } 
 { x_ { 0 } }\in  { X_ { 1 } }.$} 
For $x\not \in X_{1}$ extend them so that they are continuous. 
To understand this definition it is useful to look at the functions $\mg_{1}$ and $\mg_{2}$ shown on Figure \ref{figg1}. So $\mg_{1 }( { x_ { 0 } }) { { = } }  { { G} }_{N( { x_ { 0 } },1 )}( { x_ { 0 } }) $ on $I(x_{0},1)\cap X_{1}$. Since ${ { G} }_{N( { x_ { 0 } },1 )}$ is constant on $I(x_{0},1)$ the function $\mg_{1}$ takes this constant value on $I(x_{0},1)$. One can also observe that by construction of the Takagi function if $x_{0}$ is an endpoint of the interval $I(x_{0},1)$ then 
$T(x)={ { G} }_{N( { x_ { 0 } },1 )}( { x_ { 0 } })=\mg_{1}(x_{0})$. This implies that on $[0,1]\sm \cup_{x_{0}\in X_{1}}I(x_{0},1)$ the function $\mg_{1}$ coincides with the Takagi function. 

Observe that $G_{1}(x)$ is monotone increasing on $[0,1/2]$ and decreasing on $[1/2,1]$. For $x_{0}\in X_{1}$  the first index $N$ for $G_{N}(x_{0})$ when 
$G_{N}'(x_{0})=0$ is $N=N(x_{0},1)$. This way $\mg_{1}$ is monotone increasing on $[0,1/2]$ and decreasing on $[1/2,1]$. It is also clear that $\mg_1({X_{1}})$ is a countable set and if $y\not \in \mg_1({X_{1}})$ then $\mg_{1}^{-1}(y)$ consists of at least two points and if $x$ is such a point then $y=\mg_{1}(x)=T(x).$
It is also clear that the Takagi function on each interval of the form $I(x_{0},1)$ coincides with a similar copy of itself. This implies that within each square of the form $R(x_{0},1)$  during the definition of $\mg_{2}$ we get a similar copy of $\mg_{1}$ consisting of two monotone halves and if $x\not\in \cup_{x_{0}\in X_{1}}I(x_{0},2)$ then $\mg_{2}(x)=T(x)$.

{Suppose $\kappa \geq 2$. 
We have 
${\mathcal {L}} (\cup_{ { x_ { 0 } }} J( { x_ { 0 } },\kappa -1))\leq 2^{-\kappa +1}$.} 
By the Borel-Cantelli lemma  
almost every $y$ belongs to only 
 finitely many of the different intervals $J( { x_ { 0 } },\kappa -1)$. 
 
 Suppose 
{$$\ds%
\label{yeq1} 
y_{0}\not \in \bigcap_{\kappa =1}^{ { \infty }}\bigcup_{ { x_ { 1 } }\in { X_ { 1 } }} 
J( { x_ { 1 } },\kappa )=\Pi_{y}(S_{irr}). 
$$}%
Then there exists $\kappa (y_{0})$ such that for all 
$\kappa \geq \kappa (y_{0})$ we have 
$$\ds%
\label{yeq2} 
y_{0}\not \in \bigcup_{ { x_ { 1 } }\in  { X_ { 1 } }} 
J( { x_ { 1 } },\kappa ). 
$$%

Similarity property of the Takagi function scaled down to the squares $R(x_{0},\kappa)$
implies that  
 $\mg_{\kappa } 
( { X_ { 1 } })$ is countable and if 
$y$ belongs to only finitely many 
different $J( { x_ { 0 } },\kappa -1)$ then 
$\mg_{\kappa }^{-1}( \{ y  \})$ 
is finite. 

Hence if $\kappa$ exceeds a suitable constant $\kappa (y_{0})$ 
then $\mg_{\kappa }^{-1}( \{ y_{0}  \})=\mg_{\kappa(y_{0}) }^{-1} 
( \{ y_{0}  \})$. 
For $x\in [0,1] { \setminus } \cup_{ { x_ { 0 } }} I( { x_ { 0 } },\kappa )$ we have $\mg_{\kappa }(x)= 
 { { T} }(x).$
Hence, $ { { T} }^{-1}( \{ y_{0}  \})=\mg_{\kappa(y_{0}) }^{-1} 
( \{ y_{0}  \})$ is finite. 
This implies the statement about the level sets.

 The set {$  
 S_{reg} { { \buildrel { \rm def } \over = } }  \{ (x; { { T} }(x)): x\in [0,1] { \setminus } \Pi_{x}(S_{irr}) 
\}$}  
  is covered by the graphs of the functions $\mg_{\kappa }$. 
   
Therefore $S_{reg}$ can be covered by the graphs of countably 
many monotone functions. 
\end{proof}
 
{{\it Homework:} In the definition of regularity we had  Lipschitz  (or $C^{1}$) curves/maps. Why in the previous proof the cover of $S_{reg}$  by countable many monotone functions is sufficient to show that $S_{reg}$ is a regular $1$-set?}

\begin{remark}\label{defoccmeas} 
Study of level sets  leads naturally to the study of
the occupation measures. Given a function defined on a subinterval of the real line we put 
\begin{equation}\label{defocc} 
\mu (A)=
{\mathcal {L}}  \{x\in[0,1]: {{f}}(x)\in A  \}
={\mathcal {L}} ( {{f}}^{-1}(A)),
 \end{equation}
where we recall that ${\mathcal {L}} $ is the one-dimensional Lebesgue measure.
If we take $f=T$ and  $S_{irr}$ from Theorem  \ref{thtakirrdec}  we see that ${\mathcal {L}}(\Pi_{x}(S_{irr}))=\mu(\Pi_{y}(S_{irr}))=1$
while  $\lll(\Pi_{y}(S_{irr}))=0$, that is the occupation measure of the Takagi function is singular with respect to the Lebesgue measure.
\end{remark}

\begin{figure}[ht] 
\includegraphics[width=0.45\linewidth]{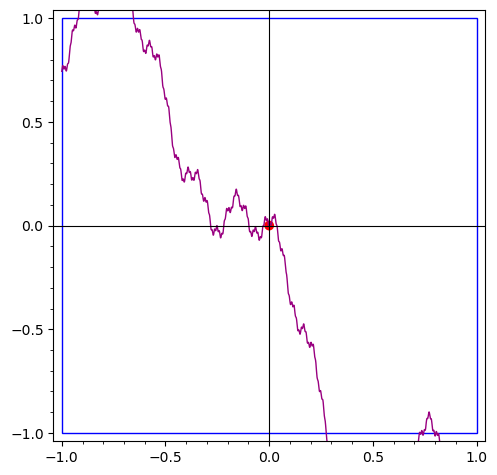} 
\caption{Part of the graph of a function in $Q^{2}$.}\label{figcmt}
\end{figure}

\subsection{Micro tangent sets of continuous functions  and irregular $1$-sets on the graphs of typical continuous functions.} 
\label{subsecmicrocont}
Another natural way one can find regular and irregular $1$-sets on graphs of continuous functions is to consider micro tangent sets. This section is mainly based on papers \cite{Bumicro,BuRa,Buirra}.

We consider graphs of continuous functions in the plane. To avoid some confusion in this section points in $ {\ensuremath {\mathbb R}}^{2}$ will be denoted by $(x;y)$ while the open interval
with endpoints $x$ and $y$ will be denoted by $(x,y)$.

The open ball of radius $r$ centered at $(x;y)$
is denoted by $B((x;y),r)$, while the closed ball is
denoted by $ {{\overline {B}}}((x;y),r).$

The closed cube of side length $2\delta >0$ centered at
$(x;y)$ is denoted by $Q((x;y),\delta )$, that is,
$Q((x;y),\delta )= \{(x';y'): |x'-x|\leq \delta \,  \text{  and  } |y'-y|
\leq \delta   \}$.

For the cube  $Q((0;0),1)$ we use the shorter notation $ {Q^ {2}}$, that is it is
the closed square of side length $2$, centered at
the origin. In the definition of micro tangent sets we will zoom in to a point on the graph of a continuous function and on a $Q^{2}$ ``microscope screen'' we watch the blown up rescaled copy of our function. See Figure 
 \ref{figmicro2} for an illustration. 


The graph of a function $f:[a,b]\to {\ensuremath {\mathbb R}}$ is  denoted by $\graph(f)=\{(x;f(x)):x\in [a,b]\}$.

By $ {C[-1,1]_ {0}}$
we mean the set of those functions  $g$ in $C[-1,1]$ for which $g(0)=0$. We need this subclass, since when we zoom in and blow up a local neighborhood of a point  we get functions passing through the origin.

As it is illustrated on  Figure \ref{figcmt} 
when watching a rescaled enlarged part of the graph of a function it may happen that the graph is not fitting into our $Q^{2}$ ``screen''.
If $F {\subset}  {\ensuremath {\mathbb R}}^2$ is a compact set then we denote by {$CENT(F)$} the connected 
component of $F\cap {Q^ {2}}$ which contains $(0;0)$, this component is the central 
component of $F$ in $ {Q^ {2}}$. On Figure \ref{figcmt}, $CENT(\graph(f)\cap Q^{2})$ is a proper subset of $
\graph(f)\cap Q^{2}$.  

In the next definition we give a precise formulation of our blowing up procedure.

\begin{definition}\label{defMT}

For $\delta _{n}>0$ we put
\begin{equation}\label{ffxodn}
F(f,x_{0},\delta _{n})=\frac1{\delta _{n}} \bigg(
\big (\graph(f)\cap Q((x_{0};f(x_{0})),
\delta _{n})\big)
-(x_{0};f(x_{0}))\bigg),
\end{equation}
that is,
$F(f,x_{0},\delta _{n})$ is the $1/\delta _{n}$-times enlarged part of
$\graph(f)$ belonging to $Q((x_{0};f(x_{0})),\delta _{n})$ translated into
$ {Q^ {2}}$. It is always a compact subset of $Q^{2}$. Working with fractal functions one can obtain many different such compact subsets. See for example the right half of Figure \ref{figmicro2} where the three function graph parts correspond to the three zoom levels shown on the left half of the figure. We are interested in the possible limit points of these compact sets in the Hausdorff metric.

The set $F$ is a {\em micro tangent set ($M$-tangent set)}
of $f$ at $x_{0}$ if there exists $\delta _{n}\searrow 0$ such that
$F(f,x_{0},\delta _{n})$ converges to $F$ in the Hausdorff metric.
The collection of the micro tangent sets of $f$
at $x_{0}$ is denoted by $f_{MT}
(x_{0})$.

The set $F$ is a {\em central-micro tangent set ($CM$-tangent set)}
of $f$ at $x_{0}$ if there exists
$\delta _{n}\searrow 0$ such that $CENT(F(f,x_{0},\delta _{n}))$
converges to $F$ in the Hausdorff metric.
\end{definition}

 \begin{figure}[ht] 
\includegraphics[width=0.9\linewidth]{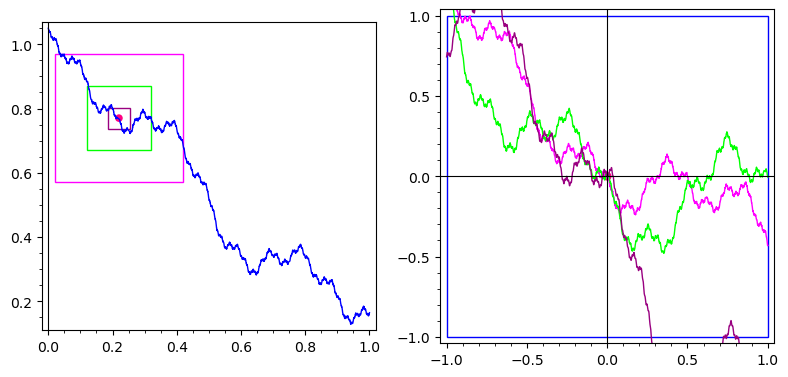} 
\caption{Zooming in on the graph at a point and watching on a  $Q^{2}$ ``screen'' the local neighborhoods}\label{figmicro2}
\end{figure}

The definition of micro tangent sets is closely related to the definition of {\it microsets } by H. Furstenberg \cite{Furmicro}.
Suppose that $A {\subset}  {\ensuremath {\mathbb R}}^{m}$ is compact. The set
$A'$ is a microset of $A$ if there exist sequences of scaling
constants $\gamma_{n}\in {\ensuremath {\mathbb R}}$ and translation vectors $t_{n}\in  {\ensuremath {\mathbb R}}^{m}$
such that $\gamma_{n} A+t_{n}\cap Q^{(m)}$ converges to $A'$ in the
Hausdorff metric.

It is easy to see that if $f$ is differentiable at $x_{0}$ then
$f_{MT}(x_{0})=f_{CMT}(x_{0})$ consists of one line segment
of slope $f'(x_{0})$ passing through the origin.
When $f'(x_{0})=+\oo$  then $f_{MT}(x_{0})$ consists of one vertical line segment passing through $(0;0)$. This motivates the next definition.

 \begin{definition}
We say that $x_{0}$ is {\em  a graph like, or a central graph like} micro tangent point
for $f$ if there exists $g\in  {C[-1,1]_ {0}}$ such that $\graph(g)\cap {Q^ {2}}\in f_{MT}(x_{0
})$, or $\graph(g)\cap {Q^ {2}}\in f_{CMT}(x_{0})$, respectively.
We denote by $GLMT(f)$, or by $CGLMT(f)$
the set of graph like micro tangent points, or the set
of central graph like micro tangent points of $f$, that is,
the set of those $(x_{0};f(x_{0}))$
for which $x_{0}$ is a graph like, or central graph like $MT$-point of
$f$, respectively.
\end{definition}

Clearly, $CGLMT(f)\supset GLMT(f)$. If $f'(x)=+\oo$ then obviously $x$ is not in $CGLMT(f)$.

 \begin{definition}
 For a fixed $g\in {C[-1,1]_ {0}}$
we denote by $GLMT_{g}(f)$, or by $CGLMT_{g}(f)$
the set of those $(x_{0};f(x_{0}))$ for which $\graph(g)\cap {Q^ {2}}$
belongs to $f_{MT}(x_{0})$, or
$CENT(\graph(g)\cap {Q^ {2}})$ belongs to
$f_{CMT}(x_{0})$, respectively.
\end{definition}

Clearly, $GLMT_{g}(f) {\subset} GLMT(f)$, $GLMT_{g}(f) {\subset} CGLMT_{g}(f)$.
Using the fact that we can always find a $g_{1}\in {C[-1,1]_ {0}}$ for which
$\graph(g_{1})\cap {Q^ {2}}=CENT(\graph(g)\cap {Q^ {2}})$ we also have
$CGLMT_{g}(f) {\subset} CGLMT(f)$.

The next definition is the most interesting for us. This describes the wildest possible local behavior.

 \begin{definition}\label{UMTdef}
We say that $x_{0}$ is a {\em universal $MT$-point} for $f$ if for any $g\in {C[-1,1]_ {0}}$ the point
$x_{0}$ is in $GLMT_{g}(f)$.

The collection of those points $(x_{0};f(x_{0}))$ for which
$x_{0}$ is a universal $MT$-point of $f$ will be denoted by
$UMT(f).$
\end{definition}

 \begin{theorem}[Theorem 2 of \cite{Buirra}]\label{cglmtsf}
For any function $f\in C[0,1]$ the sets $GLMT(f)$ and $CGLMT(f)$
are of $\sigma$-finite $ {{\mathcal H} ^ {1}}$-measure.
\end{theorem}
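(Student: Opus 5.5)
Since $GLMT(f)\subset CGLMT(f)$, it suffices to prove that $CGLMT(f)$ has $\sigma$-finite $\mathcal H^1$-measure. The plan is to split $CGLMT(f)$ into countably many pieces according to quantitative information about the witnessing function $g\in C[-1,1]_0$, and to show that each piece has finite $\mathcal H^1$-measure.

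\emph{Step 1: extract quantitative information from $g$.} Suppose $(x_0;f(x_0))\in CGLMT(f)$, with $g$ and $\delta_n\searrow 0$ such that $CENT(F(f,x_0,\delta_n))\to CENT(\graph(g)\cap Q^2)$. The limit is the central component of the graph of a continuous function through the origin. Hence there is $r>0$ with $|g|\le 1/2$ on $[-r,r]$. Consequently the limit contains the arc $\{(t;g(t)):|t|\le r\}$, which has horizontal extent $2r$ and is a graph over its $x$-projection. For large $n$, the central component of the rescaled picture therefore contains points with first coordinates near $\pm r$ and stays in the strip $|y|\le 3/4$ over $[-r,r]$. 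Scaling back, this means the following: for small $\delta=\delta_n$, the piece of $\graph(f)$ over $[x_0-r\delta,x_0+r\delta]$ is reached inside the box of height $\delta$ and does not leave the box vertically. Thus
\[
|f(x)-f(x_0)|\le \tfrac34\,\delta \quad\text{for } |x-x_0|\le r\delta,
\]
that is, $f$ has bounded oscillation at the scale $\delta$ with ratio at most $1/r$. Note that vertical pieces would violate the graph-like property. For $k\in\N$ let $A_k$ be the set of points of $\graph(f)$ for which this holds with $r\ge 1/k$ along some sequence $\delta\to 0$. Then $CGLMT(f)\subset\bigcup_k A_k$.

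\emph{Step 2: a Vitali-type covering bound for $A_k$.} Fix $k$ and a fine scale $\eta>0$. For each point $(x_0;f(x_0))\in A_k$, choose $\delta<\eta$ as in Step 1. The piece of the graph over $I=[x_0-\delta/k,x_0+\delta/k]$ lies in a rectangle of width $2\delta/k$ and height $\frac32\delta$, so it has diameter at most $c_k|I|$. Apply the Vitali covering lemma on the $x$-axis to the intervals $I$; these form a fine cover of $\Pi_x(A_k)\subset[0,1]$. This yields disjoint intervals $I_j$ such that the enlarged intervals $5I_j$ cover $\Pi_x(A_k)$. To control the graph over $5I_j$ rather than over $I_j$, one would like $f$ to stay controlled there too. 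So I would instead run the argument with the uniform condition on $[x_0-5\delta/k,x_0+5\delta/k]$, replacing $k$ by $5k$ in the definition of $A_k$ (since $r\ge 1/k$ implies the condition at radius $\delta/(5k)$). The covering sets then have diameters at most $c_k\mathcal L(I_j)$, and $\sum_j\mathcal L(I_j)\le 1$. Therefore $\mathcal H^1_\eta(A_k)\le C_k$ for every $\eta$, and letting $\eta\to 0$ gives $\mathcal H^1(A_k)<\infty$.

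\emph{Main obstacle.} The delicate part is Step 1: correctly converting Hausdorff convergence of the \emph{central components} into a genuine uniform vertical bound on the whole graph over a horizontal interval of proportional length. The central component could a priori exit through the top or bottom of the box, and portions of the graph outside the central component are invisible to $CENT$. I would handle this using connectedness. The graph of $f$ over the interval is connected and contains $(x_0;f(x_0))$. So if $f$ left the strip $|y|\le\frac34\delta$ somewhere over $[x_0-r\delta,x_0+r\delta]$, then the central component would contain a point of the graph near $y=\pm\frac34\delta$ above some $|t|<r$ (or it would be cut before reaching $|t|=r$). Either case contradicts Hausdorff closeness to the graph of $g$, whose values over $[-r,r]$ are at most $1/2$, once $n$ is large. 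Measurability of the $A_k$ is not needed, since $\mathcal H^1$ is an outer measure.
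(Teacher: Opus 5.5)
Your proof is correct and follows essentially the same route as the standard argument for this result (the survey itself only cites \cite{Buirra}). Central graph-likeness forces, along scales $\delta_n\to 0$, the graph of $f$ over an interval of length comparable to $\delta_n/k$ into a box of height comparable to $\delta_n$; equivalently, the lift of Lebesgue measure to the graph has upper $1$-density at least $c/k$ there. A $5r$-covering/density argument then gives $\mathcal H^1(A_k)<\infty$, and your device of imposing the bound on the $5$-fold enlarged interval correctly handles the enlargement in the covering lemma. Only bookkeeping needs tidying: $\sum_j\mathcal L(I_j)\le 1+2\eta$ rather than $\le 1$, and your exit-point argument needs a small margin near $|t|=r$, which continuity of $g$ provides.
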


Here is one more theorem valid for any continuous function.

\begin{theorem}[Theorem 11 of \cite{Buirra}]\label{th2}
If $f\in C[0,1]$  then
\begin{equation}\label{**A4}
 {{\cal H} ^ {1}}(UMT(f))\leq 2 .
\end{equation}
\end{theorem}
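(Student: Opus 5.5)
The plan is to use universality only for the constant function $g\equiv 0$. This shows that at every point of $UMT(f)$ the graph is, at arbitrarily small scales, trapped in a thin horizontal strip. A Vitali-type covering argument then bounds $\mathcal H^1(UMT(f))$ by roughly the total length of $x$-intervals, which is at most about $1$. That is comfortably below the constant $2$, which I use as slack for the boundary effects at $0$ and $1$ and the approximations.

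\emph{Step 1 (flatness at small scales).} Let $(x_0;f(x_0))\in UMT(f)$ and fix $\varepsilon>0$. Taking $g\equiv0\in C[-1,1]_0$ in Definition \ref{UMTdef}, there are $\delta_n\searrow0$ with $F(f,x_0,\delta_n)\to[-1,1]\times\{0\}$ in the Hausdorff metric. So for large $n$,
\[
\graph(f)\cap Q((x_0;f(x_0)),\delta_n)\subset\{|y-f(x_0)|\le\varepsilon\delta_n\}.
\]
I then claim that $|f(x)-f(x_0)|\le\varepsilon\delta_n$ for all $x\in[x_0-\delta_n,x_0+\delta_n]\cap[0,1]$. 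If not, by the Bolzano--Darboux property there would be $x$ in this interval with $\varepsilon\delta_n<|f(x)-f(x_0)|\le\delta_n$. The point $(x;f(x))$ lies in the square but not in the strip, a contradiction. Hence the graph piece $U$ over $[x_0-\delta_n,x_0+\delta_n]$ lies in a rectangle of width $2\delta_n$ and height $2\varepsilon\delta_n$, so $|U|\le2\delta_n\sqrt{1+\varepsilon^2}$.

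\emph{Step 2 (covering).} Fix $\rho>0$. The graph pieces $U$ from Step 1 with $\delta_n<\rho$, taken over all points of $UMT(f)$, form a Vitali class of closed sets for $UMT(f)$: every point lies in such pieces of arbitrarily small diameter. I apply the Vitali covering theorem for Hausdorff measures (e.g.\ \cite[Theorem 1.10]{Fal1986}). It yields disjoint pieces $U_i$ such that either $\sum_i|U_i|=\infty$ or $\mathcal H^1(UMT(f)\setminus\bigcup_iU_i)=0$. Disjoint graph pieces lie over $x$-intervals $I_i$ with pairwise disjoint interiors, all contained in $[-\rho,1+\rho]$. Therefore
\[
\sum_i|U_i|\le\sqrt{1+\varepsilon^2}\sum_i\mathcal L(I_i)\le\sqrt{1+\varepsilon^2}\,(1+2\rho)<\infty .
\]
So the second alternative holds, and since each $|U_i|<2\rho\sqrt{1+\varepsilon^2}$ we get
\[
\mathcal H^1_{2\rho\sqrt{1+\varepsilon^2}}(UMT(f))\le\sqrt{1+\varepsilon^2}\,(1+2\rho).
\]
Letting $\rho\to0$ and then $\varepsilon\to0$ gives $\mathcal H^1(UMT(f))\le1\le2$.

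\emph{Main obstacle.} The delicate points are in Step 2. The Vitali theorem for $\mathcal H^1$ requires closed sets, and the relevant graph pieces are compact. I also need $\mathcal H^1$-measurability of $UMT(f)$; if this is unclear I will apply the covering theorem to the outer measure, which the standard proof permits. The remaining detail is checking that disjointness of the $U_i$ really forces disjoint interiors of the intervals $I_i$; this holds because $f$ is a function, so its graph has exactly one point above each $x$.
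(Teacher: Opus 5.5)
Your proof is correct. It uses the mechanism the survey sketches before Lemma~\ref{prepiy2}: universality tested only against $g_0\equiv 0$ gives arbitrarily small squares $Q((x_0;f(x_0)),\delta)$ in which the graph lies in a horizontal strip of height $2\varepsilon\delta$. Your intermediate value argument correctly upgrades this to the whole graph over $[x_0-\delta,x_0+\delta]$. Note also that $x_0\in(0,1)$ automatically, since at $x_0\in\{0,1\}$ the set $[-1,1]\times\{0\}$ cannot be a micro tangent set.

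The survey does not reproduce the proof of Theorem~\ref{th2} from \cite{Buirra}, so the comparison is with that sketch rather than with a full proof. The constant $2$ in the statement leaves room for a cruder selection of covering pieces, for instance one in which the $x$-intervals may overlap with multiplicity two. Your disjoint selection via the Vitali covering theorem for $\mathcal H^1$ instead yields the sharper bound $\mathcal H^1(UMT(f))\le\mathcal H^1(GLMT_{g_0}(f))\le 1$. Since $\Pi_x$ is $1$-Lipschitz, combining this with Theorem~\ref{umt} even gives the parenthetical claim $\mathcal H^1(UMT(f))=1$ for the typical $f$.

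Your measurability concern is unnecessary. The proof of the dichotomy in \cite[Theorem 1.10]{Fal1986} never uses measurability of the covered set. After that you only need subadditivity of $\mathcal H^1_{\delta}$ together with $\mathcal H^1_\delta\le\mathcal H^1$ on the exceptional set.
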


Given a property we say that the typical continuous function has this property if
the  set, $\caf$ of functions in $C \left [0,1 \right]$ having this property is
residual, that is its complement is of first Baire category which means that it is the countable union of nowhere dense sets. Most often one can prove that $\caf$ is a dense $G_{\ddd}$ set. 
By a result of R. D. Mauldin and S. C. Williams (\cite{MauWil} Theorem 2)
the graph of the typical continuous function is of Hausdorff dimension
one, but is not of $\sigma$-finite $ {{\mathcal H} ^ {1}}$-measure. So, Theorem \ref{cglmtsf}
says that most points in the sense of $ {{\mathcal H} ^ {1}}$-measure
on the graph of the typical continuous function
 are neither graph, nor central graph like.
The next lemma implies that despite this relative
smallness of $GLMT(f)$ and $CGLMT(f)$ the projection
of these sets onto the $x$-axis is of full measure and, as we
will see in Theorem \ref{umt} below,  the projection of $UMT(f)$
onto the $x$-axis is also of full measure.

 \begin{lemma}[Lemma 3 of \cite{Bumicro}]
\label{preumt}
For a fixed $g\in  {C[-1,1]_ {0}}$ the set of those functions $f
\in C[0,1]$ for which $ {{\mathcal {L}}}(\Pi_{x}(GLMT_{g}(f)))=
 {{\mathcal {L}}}(\Pi_{x}(CGLMT_{g}(f)))=1= {{\mathcal {L}}}([0,1])$
is a dense $G_{\delta}$ set in $C[0,1].$

\end{lemma}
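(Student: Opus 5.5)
We prove that for fixed $g\in C[-1,1]_0$ the set $\mathcal F_g$ of functions $f\in C[0,1]$ with ${\mathcal L}(\Pi_x(GLMT_g(f)))={\mathcal L}(\Pi_x(CGLMT_g(f)))=1$ is residual, and we aim for an explicit dense $G_\delta$ subset. Since $GLMT_g(f)\subset CGLMT_g(f)$, it suffices to treat $GLMT_g$: $\Pi_x(CGLMT_g(f))\supset\Pi_x(GLMT_g(f))$. Throughout, "dense $G_\delta$" will mean that $\mathcal F_g$ contains one.

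\textbf{Step 1: the open sets.} For $k,n\in\N$ let $\mathcal U_{k,n}$ be the set of $f\in C[0,1]$ for which there exist a finite family of disjoint closed intervals $I_1,\dots,I_r\subset[0,1]$ of total length $>1-1/k$ and an open set $V\supset\bigcup I_j$ with the following property: for every $x_0$ in $\bigcup_j I_j$ there is $\delta\in(0,1/n)$ such that the Hausdorff distance between $F(f,x_0,\delta)$ and $\graph(g)\cap Q^2$ is less than $1/n$.

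The definition must be arranged so that $\mathcal U_{k,n}$ is open. Two points need care. First, $\delta$ should be locally uniform in $x_0$. Second, the map $f\mapsto F(f,x_0,\delta)$ is not continuous in the Hausdorff metric, because graph pieces can enter or leave $Q$ through its top or bottom edge.

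To handle both, I would require a strict, robust inequality. The local part of the graph of $f$ should be close to the rescaled $g$, and it should stay a margin away from the horizontal boundary of the screen wherever $g$ does. Since $g$ itself may touch $|y|=1$, I would first replace $g$ by $g_\varepsilon=(1-\varepsilon)g$. The Hausdorff distance from $\graph(g_\varepsilon)\cap Q^2$ to $\graph(g)\cap Q^2$ is small, and $g_\varepsilon$ lies strictly inside $Q^2$. Closeness to $g_\varepsilon$ in sup norm with a margin is then an open condition, and by continuity in $x_0$ the same $\delta$ works on a small interval around each $x_0$. Compactness then yields finitely many intervals. I expect this openness/robustness bookkeeping to be the main technical obstacle.

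\textbf{Step 2: density.} Let $h\in C[0,1]$ and $\eta>0$. Approximate $h$ within $\eta/2$ by a piecewise linear function $p$. On most of $[0,1]$, namely away from the breakpoints of $p$, add tiny scaled copies of $g_\varepsilon$. Concretely, partition $[0,1]$ into many small intervals $J$ of length $\lambda$. On the middle part of each $J$, around a center $c_J$, add a bump $\delta\, g_\varepsilon((x-c_J)/\delta)$ minus the linear part of $p$. This makes the graph near $c_J$ look, after blow-up by $1/\delta$, like $\graph(g_\varepsilon)$, since a linear term of slope $s$ contributes only $s\delta$ after rescaling...

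This alone is insufficient, because it gives good behaviour only at the single point $c_J$, not on a set of large measure. The fix is to iterate across scales so that a positive proportion of the points of each $J$ becomes good. Within each $J$, take the function to be (linear) $+$ a sum over many disjoint subintervals $J'\subset J$ of scaled copies $\delta' G((x-c_{J'})/\delta')$. Here $G$ is a continuous function on $[-1,1]$ that equals $g_\varepsilon$ "around every point of a large-measure set" approximately. Such a $G$ exists: its graph should look like $g_\varepsilon$ at a fixed scale $\delta''$ around most of its points. Take $G$ to be periodic-like, built from tiny copies of $g_\varepsilon$ placed densely side by side, say $x\mapsto\rho\,\tilde g((x \bmod 2\rho)/\rho-1)$, where $\tilde g$ is $g_\varepsilon$ adjusted to vanish at $\pm1$.

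Blowing up at $x_0=c+\rho t$ with scale $\rho$ does not give $g_\varepsilon$ exactly. It gives a shifted window of the periodic pattern, whose graph is near $\graph(g)$ only if $t\approx 0$. Hence the good points form only a set of measure about $\theta$ proportional to the allowed error, not a set of full measure. I would accept this and make a single $\mathcal U_{k,n}$ require only total length $\ge\theta_n>0$. Full measure will then come from Step 3 rather than from density.

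\textbf{Step 3: from positive proportion to full measure.} Replace the condition in Step 1 by a local version: for every dyadic interval $D$ of length $\ge 1/k$, the good set inside $D$ (at scales $<1/n$) has measure $\ge\theta|D|$ for a fixed $\theta>0$. The construction in Step 2 satisfies this uniformly, since it is applied inside every small interval. Let $f\in\bigcap_{k,n}\mathcal U_{k,n}$, and let $A_n$ be the good set at level $n$. Set $A=\limsup_n A_n$; every $x_0\in A$ is in $GLMT_g(f)$ after the $\varepsilon\to0$ diagonalization, with $\varepsilon=1/n$. The complement $[0,1]\setminus\bigcup_{n\ge m}A_n$ has density at most $1-\theta$ in every small dyadic interval. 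By the Lebesgue density theorem it therefore has measure zero, which gives ${\mathcal L}(\Pi_x(GLMT_g(f)))=1$. The residual set $\bigcap\mathcal U_{k,n}$ is a dense $G_\delta$.
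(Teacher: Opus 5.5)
The survey only cites this lemma from \cite{Bumicro} and gives no proof, so I am judging your argument on its own. The architecture is good and worth keeping. Goodness defined by strict sup-norm closeness of $u\mapsto (f(x_0+\delta u)-f(x_0))/\delta$ to $g_\varepsilon=(1-\varepsilon)g$ on $[-1,1]$ is an open condition in $(f,x_0)$, so $\mathcal U_{k,n}$ is open. Requiring a lower density $\theta_n$ of good points in every dyadic interval then lets Baire plus the Lebesgue density theorem upgrade a positive proportion to full measure. Strictly, you get that the set contains a dense $G_\delta$, which is the natural reading of the statement.

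One inaccuracy is harmless. $g_\varepsilon$ need not lie inside $Q^2$, because $g\in C[-1,1]_0$ may leave the square (e.g.\ $g(u)=10u$). Sup-norm closeness still gives Hausdorff closeness of the clipped graphs, since $\{|g|\le 1+m\}$ shrinks to $\{|g|\le 1\}$ as $m\to 0$.

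The genuine gap is Step 2, which is also where the $\theta_n$ of Step 3 has to come from.

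\begin{itemize}
\item Blow-ups do not kill linear terms: $(p(x_0+\delta u)-p(x_0))/\delta=su$ for every $\delta$. So ``a linear term of slope $s$ contributes only $s\delta$ after rescaling'' is false. Your refined function ``(linear) $+\sum\delta'G(\cdot)$'' has blow-ups near $\mathrm{graph}(g_\varepsilon+s\,\mathrm{id})\cap Q^2$, not near $\mathrm{graph}(g)\cap Q^2$.
\item Compensating the slope cell by cell forces cells at different base levels, joined by ramps of rescaled slope at least $|s|$. Because you blow up at the full cell scale $\rho$, the window of $c+\rho t$ reaches into the neighbouring ramp. It also reaches the near-vertical pieces created by forcing $\tilde g(\pm1)=0$. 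So the admissible $|t|$ shrinks as $|s|$ grows.
\item $\theta_n$ must be independent of $h$. With ``for some $\theta>0$'' in the definition, any $f$ whose good set contains a dense open set would lie in $\mathcal U_{k,n}$ for all $k$, and the density argument collapses. Hence your sentence ``the construction in Step 2 satisfies this uniformly'' is exactly the unproved point.
\end{itemize}

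Here is a repair.
\begin{itemize}
\item Define $\psi$ on $[-1,1]$ by $\psi(u)=(1-2\kappa)g_\varepsilon(u/(1-2\kappa))$ for $|u|\le 1-2\kappa$. Keep it constant up to $|u|=1-\kappa$, then make it linear down to $0$ at $\pm1$.
\item On each cell $[c_j-\rho,c_j+\rho]$ put $f=p(c_j)+\rho\psi((x-c_j)/\rho)$.
\item Join consecutive cells by linear ramps on gaps of length $\rho$.
\item Blow up $x_0=c_j+\rho t$ at scale $(1-2\kappa)\rho$ rather than $\rho$.
\end{itemize}

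For $|t|\le\kappa$ the window stays inside the cell, so no ramp or edge piece is ever seen. The rescaled function differs from $g_\varepsilon$ by at most $2\omega_\psi(|t|)/(1-2\kappa)$, where $\omega_\psi$ is the modulus of continuity of $\psi$. The good proportion is therefore about $\frac23\min(\kappa,\tau_n)$, with $\tau_n$ depending only on $g$ and $n$ and not on the slopes of $p$. Meanwhile $\|f-p\|_\infty=O(\rho)$, with $\rho$ chosen after $p$. With this version of Step 2, your Steps 1 and 3 go through.
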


Using the fact that $C[-1,1]_ {0}$ is separable and considering a countable dense set $g_{n}$ 
the next theorem follows from Lemma \ref{preumt}. 

 \begin{theorem}[Theorem 5 of \cite{Bumicro}]\label{umt}
There is a dense $G_{\delta}$ set $ {{\mathcal G}}$ of $C[0,1]$ such that
$ {{\mathcal {L}}}(\Pi_{x}(UMT(f)))=1$ for all $f\in  {{\mathcal G}}$.
Furthermore, $UMT(f)$ is a dense $G_{\delta}$ subset in the relative
topology of $\graph(f)$.
Hence, for the typical
continuous function in $C[0,1]$ almost every $x\in [0,1]$ is a universal
micro tangent point and a typical point on the graph of $f$ is in $UMT(f)$.
\end{theorem}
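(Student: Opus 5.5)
The plan is to derive the theorem from Lemma \ref{preumt} via a countable dense family and a closure argument for Hausdorff limits. Since $C[-1,1]_0$ is separable, fix a countable sup-norm dense set $\{g_n\}_{n\in\N}\subset C[-1,1]_0$. By Lemma \ref{preumt}, for each $n$ the set $\mathcal G_n$ of those $f$ with $\mathcal L(\Pi_x(GLMT_{g_n}(f)))=1$ is a dense $G_\delta$ in $C[0,1]$. By Baire's theorem $\mathcal G_0:=\bigcap_n\mathcal G_n$ is again a dense $G_\delta$. For $f\in\mathcal G_0$ the set $A(f):=\bigcap_n\Pi_x(GLMT_{g_n}(f))$ has full measure, since it is a countable intersection of full measure sets.

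Next I show that $x_0\in A(f)$ implies that $x_0$ is a universal $MT$-point. The set $f_{MT}(x_0)$ is closed in the Hausdorff metric, by a diagonal argument: if $F_k\in f_{MT}(x_0)$ and $F_k\to F$, pick $\delta^{(k)}$ with $\delta^{(k)}<\delta^{(k-1)}/2$ and $d_H(F(f,x_0,\delta^{(k)}),F_k)<1/k$; then $F(f,x_0,\delta^{(k)})\to F$. Given any $g\in C[-1,1]_0$, choose $g_{n_k}\to g$ uniformly. Then $\graph(g_{n_k})\cap Q^2\to\graph(g)\cap Q^2$ in the Hausdorff metric. Care is needed here because of truncation at $|y|=1$, where parts of the graph may leave the screen. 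I would handle this by noting that $\{g_n\}$ may be chosen dense among functions with graphs truncated in $Q^2$. Concretely, the relevant objects are the compact sets $\graph(g)\cap Q^2$, and for $g$ with $|g|\le1$ the convergence is immediate. For general $g$, approximate from within, taking $g_n$ close to $g$ with $g_n$ crossing the levels $\pm1$ transversally near where $g$ does. This step is the main technical obstacle: Hausdorff continuity of $g\mapsto\graph(g)\cap Q^2$ fails at functions that touch $\pm1$ tangentially. I would first try replacing $\{g_n\}$ by a countable family that is dense in the Hausdorff sense among the sets $\{\graph(g)\cap Q^2\}$, which exists because the hyperspace of compact subsets of $Q^2$ is separable. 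Lemma \ref{preumt} applies to each member, since each such set is itself of the form $\graph(g_n)\cap Q^2$. Then $\Pi_x(UMT(f))\supset A(f)$, so $\mathcal L(\Pi_x(UMT(f)))=1$.

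For the topological statement, write $UMT(f)$ as a countable intersection over $n$ and $j$ of sets
\[
U_{n,j}=\bigl\{(x_0;f(x_0)) : \exists\,\delta<1/j,\ d_H\bigl(F(f,x_0,\delta),\graph(g_n)\cap Q^2\bigr)<1/j\bigr\}.
\]
Here $U_{n,j}$ is relatively open in $\graph(f)$ provided we use a strict inequality and the map $x_0\mapsto F(f,x_0,\delta)$ is Hausdorff continuous. That map can fail to be continuous at boundary tangencies. I would get around this by letting $\delta$ range over an open interval and using a slightly enlarged tolerance, or by showing $\{x_0: \ldots\}$ contains an open set around each such point. By the closure argument above, this intersection equals $UMT(f)$, so $UMT(f)$ is a relative $G_\delta$. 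Density follows because $\Pi_x(UMT(f))$ has full measure and is therefore dense in $[0,1]$, and $f$ is continuous, so $UMT(f)$ is dense in $\graph(f)$. Setting $\mathcal G:=\mathcal G_0$ then gives all claims, including that a typical point of $\graph(f)$ lies in $UMT(f)$ in the Baire sense.
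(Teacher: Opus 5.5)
Your route is the paper's. The paper's proof consists only of the reduction you give: by separability, intersect the dense $G_\delta$ sets of Lemma \ref{preumt} over a countable family $\{g_n\}$, and use that $f_{MT}(x_0)$ is closed.

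You are right that sup-norm density is not enough. If $g\equiv 1$ on an interval, then $g+1/n\to g$ uniformly, but the truncated graphs do not converge. Your final choice settles this: take $\{g_n\}$ so that $\{\mathrm{graph}(g_n)\cap Q^2\}$ is $d_H$-dense in $\{\mathrm{graph}(g)\cap Q^2: g\in C[-1,1]_0\}$, which is possible because this is a subset of the separable hyperspace of compact subsets of $Q^2$. The ``transversal crossing'' idea can be dropped.

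A minor point: Lemma \ref{preumt} gives a dense $G_\delta$ set contained in your $\mathcal G_n$, not $\mathcal G_n$ itself. Take $\mathcal G_n$ to be that set.

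The one step you leave unproved is that $U_{n,j}$ is relatively open. Neither ``enlarged tolerance'' nor ``contains an open set around each point'' supplies it. What works is enlarging the square, not the tolerance.

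Suppose $\delta<1/j$ witnesses $(x_0;f(x_0))\in U_{n,j}$. Put $r=\|(x;f(x))-(x_0;f(x_0))\|_\infty$ and $\delta'=\delta+r$. Then
\[
\mathrm{graph}(f)\cap Q((x_0;f(x_0)),\delta)\subset \mathrm{graph}(f)\cap Q((x;f(x)),\delta')\subset \mathrm{graph}(f)\cap Q((x_0;f(x_0)),\delta+2r).
\]
The right-hand compact sets decrease to the left-hand one as $r\to 0$, so they converge to it in $d_H$, and the middle set is squeezed to it as well. After rescaling, and using continuity of $f$, this gives $d_H\bigl(F(f,x,\delta'),F(f,x_0,\delta)\bigr)\to 0$ as $x\to x_0$. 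Hence $\delta'<1/j$ witnesses $(x;f(x))\in U_{n,j}$ for all $x$ near $x_0$.

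So you only need right-continuity of $\delta\mapsto \mathrm{graph}(f)\cap Q(c,\delta)$, never continuity in $x_0$ at fixed $\delta$. With this, $UMT(f)=\bigcap_{n,j}U_{n,j}$ is a relative $G_\delta$ for every $f\in C[0,1]$, and your density argument completes the proof.
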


Assume that $g_{0}$ denotes the identically zero function in $[-1,1].$
Then $CGLMT_{g_{0}}(f)=GLMT_{g_{0}}(f)$. It is clear that for a small $\ddd>0$ if a function $g$ is $\delta$ close to $g_{0}$ in the Hausdorff metric then  $\lll(\Pi_y(\graph g))<2\ddd$.
Using the definition of micro tangent sets  one can consider a cover of $UMT(f)$
by squares of the form $ Q((x_{0};f(x_{0})),
\delta)$
such that
$\frac1{\eta } \bigg(
\big (\graph(f)\cap Q((x_{0};f(x_{0})),
\eta )\big)
-(x_{0};f(x_{0}))\bigg),$ equals the graph of a function $g\in C[-1,1]_ {0}$ which is  $\delta$ close to $g_{0}$, using this one can see that $\lll(\Pi_{y}(UMT(f)))<2\ddd$. This argument is behind the next lemma:

 \begin{lemma}[Lemma 6 of \cite{Bumicro}]\label{prepiy2}
We have $ {{\mathcal {L}}}(\Pi_{y}(GLMT_{g_{0}}(f)))=0$ for any $f\in C[0,1].$
\end{lemma}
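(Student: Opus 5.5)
Fix $f\in C[0,1]$ and $\delta\in(0,1/4)$. The plan is to cover $\Pi_y(GLMT_{g_0}(f))$ by countably many intervals of total length at most a constant times $\delta$. Then let $\delta\to 0$.

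First, the local structure. Let $(x_0;f(x_0))\in GLMT_{g_0}(f)$. By definition there are $\eta_n\searrow0$ such that $F(f,x_0,\eta_n)$ converges in the Hausdorff metric to $\graph(g_0)\cap Q^2=[-1,1]\times\{0\}$. So there is arbitrarily small $\eta=\eta(x_0)>0$ with
\[
d_H\big(F(f,x_0,\eta),[-1,1]\times\{0\}\big)<\delta .
\]
Unscaled, this says two things. Every point of $\graph(f)$ in $Q((x_0;f(x_0)),\eta)$ has $|f(x)-f(x_0)|<\delta\eta$. The graph also comes $\delta\eta$-close to the points $(x_0\pm\eta;f(x_0))$.

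The key step is to show that the graph of $f$ over the whole interval $[x_0-(1-\delta)\eta,\,x_0+(1-\delta)\eta]$ stays in the horizontal strip $|y-f(x_0)|<\delta\eta$. Continuity does this. If $f$ left the strip on that interval, then by the Bolzano--Darboux property some point of the graph would have $|f(x)-f(x_0)|\in[\delta\eta,\eta]$ with $|x-x_0|\le\eta$. That point lies in the square and is $\ge\delta$-far (after rescaling) from the segment, a contradiction. Consequently
\[
f\big([x_0-\tfrac{\eta}{2},x_0+\tfrac{\eta}{2}]\big)\subset J(x_0):=\big(f(x_0)-\delta\eta,\,f(x_0)+\delta\eta\big),
\]
so $\mathcal L(J(x_0))=2\delta\eta=4\delta\cdot\mathcal L(I(x_0))$, where $I(x_0)=(x_0-\eta/2,x_0+\eta/2)$.

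Next comes the covering step, which I expect to be the only real obstacle. The intervals $I(x_0)$ overlap, and we must control $\sum\mathcal L(J)$ by $\mathcal L([0,1])$. I would use the Vitali covering lemma in its simple form for intervals. Restrict to radii $\eta\le1$, and first to a bounded sub-collection; alternatively use the Besicovitch covering theorem on the line. This lets us extract a countable disjoint subfamily $I(x_k)$ such that the enlarged intervals $5I(x_k)$ cover $\Pi_x(GLMT_{g_0}(f))$. Here the enlargement is harmless, because the strip property holds on $[x_0-(1-\delta)\eta,x_0+(1-\delta)\eta]$. So I would set up the radii from the start with a safety factor: take $I(x_0)$ of length $\eta/5$ and use $5I(x_0)\subset[x_0-\eta,x_0+\eta]$. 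The only adjustment is that on the part of $5I$ beyond $(1-\delta)\eta$ the strip argument needs $\delta$ small. To avoid it, I would simply shrink again so that $5I(x_0)\subset[x_0-\eta/2,x_0+\eta/2]$.

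Then
\[
\Pi_y(GLMT_{g_0}(f))\subset\bigcup_k f(5I(x_k))\subset\bigcup_k J(x_k),
\]
and
\[
\sum_k\mathcal L(J(x_k))=\sum_k 2\delta\eta(x_k)\le C\delta\sum_k\mathcal L(I(x_k))\le C\delta(1+1),
\]
by disjointness of the $I(x_k)$ inside $[-1,2]$. Since $\delta$ is arbitrary, $\mathcal L(\Pi_y(GLMT_{g_0}(f)))=0$. If Vitali needs bounded radii, the restriction $\eta\le1$ already ensures this.
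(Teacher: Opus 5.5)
Your argument is correct and is essentially the paper's: at a point of $GLMT_{g_0}(f)$ you find a scale $\eta$ at which, by continuity, $f$ maps an $x$-interval of length comparable to $\eta$ into a $y$-interval of length $2\delta\eta$, and a covering argument then bounds $\mathcal{L}(\Pi_y(GLMT_{g_0}(f)))$ by a constant times $\delta$. The paper leaves the covering step implicit, and your Vitali $5r$-argument supplies it correctly, including the shrinking of the intervals so that each $5I(x_k)$ stays inside the flat window; the small constant slips ($4\delta$ instead of $2\delta$, and the length of $[-1,2]$) are harmless.
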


Since $(x_{0};f(x_{0}))\in UMT(f)$ implies that $(x_{0};f(x_{0}))\in GLMT_{g_{0}}(f))$
Lemma \ref{prepiy2} implies that
$ UMT(f)$ has  small, $\lll$-measure $0$, $y$-projection, but by Theorem  \ref{umt} its $x$-projection is of full measure.

 \begin{theorem}[Theorem 7 of \cite{Bumicro}]\label{piy}
There is a dense $G_{\delta}$ set $ {{\mathcal G}}$ of $C[0,1]$ such
that $ {{\mathcal {L}}}(\Pi_{y}(UMT(f)))=0$ for all $f\in {{\mathcal G}}.$
Hence any preimage of almost every $y$ in the range of
the typical continuous function is not a $UMT$-point.
\end{theorem}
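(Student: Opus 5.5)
The statement follows from Lemma \ref{prepiy2} together with the inclusion $UMT(f)\subset GLMT_{g_0}(f)$, where $g_0\equiv 0$. No genericity is needed for the measure-zero claim; in fact it holds for every $f\in C[0,1]$. We may therefore take $\mathcal G$ to be any dense $G_\delta$ set, for instance the one from Theorem \ref{umt}, or all of $C[0,1]$.

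First, the inclusion. If $(x_0;f(x_0))\in UMT(f)$, then by Definition \ref{UMTdef} the point $x_0$ lies in $GLMT_g(f)$ for every $g\in C[-1,1]_0$, in particular for $g=g_0$. Monotonicity of Lebesgue (outer) measure and Lemma \ref{prepiy2} then give $\mathcal L(\Pi_y(UMT(f)))\le \mathcal L(\Pi_y(GLMT_{g_0}(f)))=0$.

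If Lemma \ref{prepiy2} is to be justified rather than quoted, I would argue as follows. Fix $\delta>0$. For each point $P=(x_0;f(x_0))\in GLMT_{g_0}(f)$ there are arbitrarily small $\eta>0$ such that the rescaled picture $F(f,x_0,\eta)$ lies within Hausdorff distance $\delta$ of the segment $[-1,1]\times\{0\}$. By continuity of $f$ on the interval $[x_0-\eta,x_0+\eta]$, for such $\eta$ the whole graph over that interval stays in the horizontal strip of half-width $\delta\eta$ around $f(x_0)$. Hence the $y$-projection of the graph over $[x_0-\eta,x_0+\eta]$ is an interval $J_P$ of length at most $2\delta\eta$.

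The family of intervals $[x_0-\eta,x_0+\eta]$ is a Vitali cover of $\Pi_x(GLMT_{g_0}(f))\subset[0,1]$. Using either the Vitali covering lemma or the $5r$-covering lemma, we pick a disjoint subfamily whose $5$-fold enlargements cover this set. The enlarged intervals have total length at most $5\cdot 2\cdot(1+\text{const})$. This is the one delicate point: the flatness estimate was obtained at scale $\eta$, not at scale $5\eta$. I would avoid the enlargement altogether by using the Besicovitch covering theorem on the line, which gives bounded overlap without enlarging. Then $\sum 2\eta\le C$, so $\Pi_y(GLMT_{g_0}(f))\subset\bigcup J_P$ has measure at most $C\delta$. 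Letting $\delta\to0$ gives $0$.

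Finally, for the "hence" clause: the set $\Pi_y(UMT(f))$ is null, so for almost every $y$ no preimage $x\in f^{-1}(y)$ has $(x;f(x))\in UMT(f)$.
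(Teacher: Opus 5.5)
Your proposal is correct and follows the paper's route. You combine the inclusion $UMT(f)\subset GLMT_{g_0}(f)$ with Lemma~\ref{prepiy2}, which holds for every $f\in C[0,1]$, so any dense $G_\delta$ set can serve as $\mathcal G$. Your flat-square justification of Lemma~\ref{prepiy2} is the paper's sketch made precise, and the one-dimensional Besicovitch covering theorem is the right tool for the covering step the paper leaves implicit: a $5r$-argument loses the flatness estimate, and Vitali leaves an $x$-null remainder whose image under a merely continuous $f$ need not be null.
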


The $UMT(f)$ set of a typical continuous function
is quite stable. 
\begin{theorem}[Theorem 10 of \cite{Buirra}]\label{th1}
Suppose $f\in C[0,1]$ and $h\in C[0,1]$ is differentiable
at $x_{0}\in (0,1)$.
If $(x_{0};f(x_{0}))\in UMT(f)$ then $(x_{0};f(x_{0})+h(x_{0}))
\in UMT(f+h)$.
\end{theorem}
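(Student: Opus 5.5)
The plan is to show directly from Definition \ref{defMT} that every micro tangent set of $f+h$ at $x_0$ is the image of a micro tangent set of $f$ at $x_0$ under a fixed shear, up to the boundary effects of the window $Q^{2}$. Put $a=h'(x_0)$ and write $h(x)=h(x_0)+a(x-x_0)+\varepsilon(x)(x-x_0)$ with $\varepsilon(x)\to 0$ as $x\to x_0$. We must show that for every $g\in C[-1,1]_0$ there are $\delta_n\searrow 0$ with $F(f+h,x_0,\delta_n)\to \graph(g)\cap Q^{2}$ in the Hausdorff metric. The natural candidate is to apply the universality of $x_0$ for $f$ to the sheared target $\tilde g(t)=g(t)-at$. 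This needs one care point, because $\tilde g$ may leave $Q^{2}$ while $g$ does not, and conversely. So we choose $\lambda\ge 1+|a|$ (for instance) and use the scaling freedom: the windows will be $Q((x_0;f(x_0)),\lambda\delta_n)$ for $f$ and $Q((x_0;f(x_0)+h(x_0)),\delta_n)$ for $f+h$.

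Concretely, define $\tilde g\in C[-1,1]_0$ by $\tilde g(s)=\lambda^{-1}\bigl(g(\lambda s)-a\lambda s\bigr)$ for $|s|\le 1/\lambda$, extended to $[-1,1]$ by a constant so that it stays continuous. We also need $\tilde g$ to take only values in $[-1,1]$ on $|s|\le 1/\lambda$, which holds because $|g|\le1$ there, and to have its graph inside $Q^{2}$ elsewhere; the constant extension guarantees this. Universality gives $\eta_n\searrow0$ with $F(f,x_0,\eta_n)\to \graph(\tilde g)\cap Q^{2}=\graph(\tilde g)$. Set $\delta_n=\eta_n/\lambda$. Hausdorff convergence to a graph of a continuous function, combined with the fact that $F(f,x_0,\eta_n)$ is itself a piece of a graph of a function, gives uniform control. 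For $|s|\le 1$, every point of $\graph(f)$ in the big window over the abscissa $x_0+\eta_n s$ lies within $o(1)$ of $(s;\tilde g(s))$. Moreover, for $|s|\le 1/\lambda$ the point $(x_0+\eta_n s; f(x_0+\eta_n s))$ lies in the big window, because $|\tilde g(s)|\le 1/\lambda<1$ when $|g|<1$. The cases $|g|=1$ we postpone to the step below.

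Then we apply the affine map $\Phi_n(s;u)=(\lambda s;\lambda u+a\lambda s+\varepsilon(x_0+\eta_n s)\lambda s)$, which carries the rescaled graph of $f$ to the rescaled graph of $f+h$ at scale $\delta_n$. Since $\varepsilon\to0$ uniformly on the shrinking window, $\Phi_n$ converges uniformly on $Q^{2}$ to the linear shear $\Phi(s;u)=(\lambda s;\lambda u+a\lambda s)$. This gives $\Phi(\graph\tilde g|_{[-1/\lambda,1/\lambda]})=\graph(g)$. Intersecting with $Q^{2}$ then yields $F(f+h,x_0,\delta_n)\to\graph(g)\cap Q^{2}$.

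The main obstacle is the interaction between Hausdorff convergence and the window cut-off $\cap\,Q^{2}$. Points of the graph of $f+h$ near the top or bottom edge of the small window may appear or disappear, and intersection with a closed square is not Hausdorff continuous. To deal with this, we would first prove the statement for $g$ with $\|g\|_\infty<1$. Then the limit graph stays strictly inside the horizontal edges, the vertical edges match exactly via $s=\pm1/\lambda$, and all nearby points are captured. The general $g$ then follows by approximation: take $g_k=(1-1/k)g$, obtain $\delta^{(k)}_n$ for each $k$, and use a diagonal argument, since $\graph(g_k)\to\graph(g)$ in the Hausdorff metric. Finally, to ensure that no extra pieces of the graph of $f+h$ enter the small window from abscissas with $1/\lambda<|s|\le1$, we use the fact that the $x$-extent of the small window is exactly $|s|\le 1/\lambda$ after rescaling. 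Hence no such points exist.
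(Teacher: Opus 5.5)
Your shear-and-enlarge mechanism is sound, and for targets with $\|g\|_\infty<1$ (and, via your diagonal argument, $\|g\|_\infty\le 1$) it proves the claim. However, there is a genuine gap. You assume $|g|\le 1$ (``which holds because $|g|\le1$ there''), but universality requires $\mathrm{graph}(g)\cap Q^{2}\in(f+h)_{MT}(x_0)$ for \emph{every} $g\in C[-1,1]_0$, and such $g$ may leave $[-1,1]$. For these $g$ the target lies over the proper closed set $\{|g|\le1\}$; for $g(s)=3s$ it is the segment over $[-1/3,1/3]$. Such a set is not a Hausdorff limit of graphs of functions with $\|g_k\|_\infty\le1$: those graphs all project onto $[-1,1]$ on the $x$-axis, and this property survives Hausdorff limits. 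So your approximation step cannot reach these targets. This is exactly where the cut-off problem you identified is real. Put $\psi_n(t)=((f+h)(x_0+\delta_nt)-(f+h)(x_0))/\delta_n$. Uniform convergence $\psi_n\to g$ does not by itself give $\mathrm{graph}(\psi_n)\cap Q^{2}\to\mathrm{graph}(g)\cap Q^{2}$ when, for example, $g\equiv1$ on an interval (a $\psi_n$ slightly above $1$ erases that whole piece), or when $|g|\ge1$ near a point where $|g|=1$ (an isolated point of the target can be lost).

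Here is one way to close the gap.
\begin{itemize}
\item First replace $g$ by $\max(-2,\min(g,2))$, which does not change $\mathrm{graph}(g)\cap Q^{2}$, and take $\lambda>2+|a|$. Your construction then gives $\|\tilde g\|_\infty<1$, hence $\psi_n\to g$ uniformly on $[-1,1]$.
\item Two small repairs are needed in that step. On $|s|\le 1/\lambda$ you only have $|\tilde g(s)|\le(\|g\|_\infty+|a|)/\lambda$, not $1/\lambda$. Also, Hausdorff convergence of the cut-off set $F(f,x_0,\eta_n)$ does not by itself keep the rescaled $f$ inside the big window; you need the intermediate value argument. On the part of the graph inside the window the rescaled $f$ stays within $\|\tilde g\|_\infty+o(1)<1$, and since it vanishes at $0$ it can never reach $\pm1$.
\item Uniform convergence does give Hausdorff convergence of the cut-off graphs whenever $\{|g|\le1\}=\overline{\{|g|<1\}}$. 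By compactness, every point of $\{|g|\le1\}$ is $\epsilon$-close to a point where $|g|\le 1-\gamma$, with $\gamma=\gamma(\epsilon)>0$ uniform. The reverse inclusion follows from $\{|g|\le 1+\beta\}\searrow\{|g|\le1\}$ as $\beta\searrow0$.
\item A level $c$ with $\{|g|\le c\}\ne\overline{\{|g|<c\}}$ is a local minimum value of $|g|$. There are only countably many such values, since each equals $\min_{[p,q]}|g|$ for some rationals $p<q$.
\item So choose $c_k\searrow 1$ outside this countable set and apply the above to $g/c_k$. Then $\mathrm{graph}(g/c_k)\cap Q^{2}\to\mathrm{graph}(g)\cap Q^{2}$, because the compact sets $\{|g|\le c_k\}$ decrease to $\{|g|\le1\}$. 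Finish with your diagonal argument, i.e.\ closedness of $(f+h)_{MT}(x_0)$ in the Hausdorff metric.
\end{itemize}
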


Theorem \ref{th2}
implies that $ {{\cal H} ^ {1}}(UMT(f))\leq 2< {\infty}$,
(in fact, one can verify that $ {{\cal H} ^ {1}}(UMT(f))=1$). Therefore $UMT(f)$ is a $1$-set.

Taking a typical continuous
function $f(x)$ one can apply Theorem \ref{th1}
with $h_{c}(x)=cx$ where $c\in  {\ensuremath {\mathbb R}}$ is an arbitrary constant.
Since $UMT(f+h_{c}) {\subset} GLMT_{g_{0}}(f+h_{c})$
by Lemma \ref{prepiy2}, $ {{\mathcal {L}}}(\pi_{y}(UMT(f+h_{c})))=0$ holds
for any $c\in  {\ensuremath {\mathbb R}}$. 
hence if we denote by $\Pi'_{c}$ the projection onto the line $y=c \cdot x$ then for the typical continuous function $\lll(\Pi'_{c}(UMT(f)))=0$ for all $c\not=0$. Hence on the graph of the typical continuous function $UMT(f)$ is a very interesting irregular $1$-set. Its projection is of zero Lebesgue measure in all directions with the only exception of the projection onto the $x$-axis.
 Hence, $UMT(f)$ for the
typical continuous function is an irregular $1$-set.


Recall the definition of occupation measures in \eqref{defocc}. The argument about the Takagi function following this definition  can be used for typical continuous functions in a very strong way. 
By Theorem \ref{th1}
 if $h\in C[0,1]$ is almost everywhere
differentiable then for almost every $x\in [0,1]$
from $(x;f(x))\in UMT(f)$ it follows that
$(x;f(x)+h(x))\in UMT(f+h)$, which implies
that the occupation measure of $f+h$ is also singular.

In (\cite{HuPe})  P. Humke and G. Petruska
proved that the packing dimension of the typical
continuous function equals two. 
In this respect for the typical continuous function  $UMT(f)$ is sufficiently large.

 \begin{theorem}[Theorem 8 of \cite{Bumicro}]\label{packing}
For the typical continuous function $f\in C[0,1]$ the packing
dimension of $UMT(f)$ equals two.
\end{theorem}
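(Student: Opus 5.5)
Since the packing dimension of any planar set is at most two, only the lower bound $\dim_P UMT(f)\ge 2$ for the typical $f$ needs proof. I will use the standard criterion for the packing dimension of subsets of complete metric spaces. If $A\subset\R^2$ is a nonempty set with $\overline{\dim}_B(A\cap V)\ge s$ for every open $V$ meeting $A$, and $A$ is a $G_\delta$ set, then $\dim_P A\ge s$. This follows from the Baire category theorem applied to the countable cover in the definition $\dim_P A=\inf\{\sup_i\overline{\dim}_B A_i: A\subset\bigcup A_i\}$: one may take the $A_i$ closed, and some $A_i$ then contains a relatively open piece of $A$. By Theorem \ref{umt}, for typical $f$ the set $UMT(f)$ is a dense $G_\delta$ subset of $\graph(f)$. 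Hence it suffices to show that for typical $f$ the set $\graph(f)\cap V$ has upper box dimension $2$ for every open $V$ meeting the graph. Since $UMT(f)$ is dense in $\graph(f)$, the closure of $UMT(f)\cap V$ contains $\graph(f)\cap V'$ for a slightly smaller open $V'$, and box dimension is unchanged by taking closures.

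The main step is therefore a Baire category statement: the typical $f$ has $\overline{\dim}_B(\graph(f)|_{I})=2$ for every rational subinterval $I\subset[0,1]$. This is essentially the Humke--Petruska result quoted before the theorem, localized to subintervals. I plan to prove it directly. For rational $I$ and $k,m\in\N$, let $\mathcal U_{I,k,m}$ be the set of $f$ for which there is some $r<1/m$ such that the graph of $f$ over $I$ meets more than $r^{-2+1/k}$ grid squares of side $r$. This set is open. Indeed, if $f$ meets a given collection of grid squares of side $r$ in their interiors, then so does every $h$ with $\|h-f\|_\infty$ small; to get interior hits one counts slightly shrunken squares, which costs only a constant factor.

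Next I show $\mathcal U_{I,k,m}$ is dense. Given $f$ and $\varepsilon>0$, first approximate $f$ by a piecewise linear function. Then add a zigzag of amplitude $\varepsilon/2$ and period about $r^{2}$ on $I$, with $r$ tiny. Each steep segment sweeps vertically through about $\varepsilon/r$ squares of side $r$, and the roughly $r^{-1}$ columns of width $r$ in $I$ each contain such segments. This gives about $\varepsilon r^{-2}$ squares, which exceeds $r^{-2+1/k}$ for $r$ small. Intersecting the sets $\mathcal U_{I,k,m}$ over all $I,k,m$ gives a dense $G_\delta$ set. Intersecting this further with the set from Theorem \ref{umt} yields a residual set on which both properties hold.

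I expect the main obstacle to be the transfer from $\graph(f)$ to $UMT(f)$. Box dimension sees only closures, but the packing lower bound must hold for every piece of a countable cover. This is exactly why the $G_\delta$ and density part of Theorem \ref{umt}, together with the Baire argument in the criterion, is essential. A single box-dimension estimate of $UMT(f)$ would not suffice, since $UMT(f)$ has zero Lebesgue projection onto the $y$-axis and is very sparse in measure. I will carefully check the criterion in the form: a closed cover $\{F_i\}$ of a complete metric space $A$ has some $F_i$ containing $A\cap V\ne\emptyset$ with $V$ open. Here a $G_\delta$ subset of $\R^2$ is completely metrizable, so the Baire category theorem applies.
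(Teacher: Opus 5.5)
Your proof is correct, and it is the natural argument for this statement. The survey gives no proof of its own, only the pointer to the Humke--Petruska theorem \cite{HuPe} and to Theorem \ref{umt}. You combine exactly these two ingredients through the Baire-category characterization of packing dimension as modified upper box dimension, applied to the completely metrizable set $UMT(f)$. The only difference is that you prove the localized fact directly (upper box dimension $2$ for the graph over every rational subinterval), using interior-hit counting and a zigzag perturbation, instead of quoting \cite{HuPe}; this is a valid, self-contained substitute.
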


On the other hand, by Theorem \ref{piy} for the typical continuous function\\  $ {{\mathcal {L}}}(\Pi_{y}(UMT(f)))=0$. Hence, in this respect $UMT(f)$
is small. This leads to the natural question about the micro tangent behavior at other points.

\subsection{Micro tangent sets of typical continuous functions   at other, non-$UMT$ points. Vertical universality. } 
\label{subsecmicrocontvert}
In this subsection we discuss some results from  \cite{BuRa}.
Theorems \ref{cglmtsf} and \ref{piy} show that there are not that many universal micro tangent points on the graph of a typical continuous function.  
In fact   for any continuous function the
graph-like, or central graph-like micro tangent points are of $\sigma$-finite
$ {{\mathcal H}}^1$-measure, while as we mentioned after Theorem \ref{cglmtsf} the graph of the typical continuous function is not of $\sigma$-finite
$ {{\mathcal H}}^1$-measure. 

This means that at many points the central part of the micro tangent set system cannot contain the graph of a function. That is  we are interested in points $(x_0;f(x_0))$ where
$f_{MT}(x_0)$ contains sets which consist of vertical line segments, that is,
sets which are of the form $F \times [-1,1]$, where $F {\subset}  [-1,1]$ is an arbitrary
closed set with $0\in F$.

For an arbitrary real function $f$ denote by $L_{\alpha,f}$ 
the horizontal level set at height $ {\alpha}$, that is the set
$\{(x;f(x)):f(x)= {\alpha}\}$.

The accumulation points of $L_{{\alpha},f}$ are denoted by $P_{{\alpha},f}$. Those points which are only right, respectively left accumulation points of $P_{{\alpha},f}$ are denoted by $P^+_{{\alpha},f}$ and $P^-_{{\alpha},f}$, respectively.
Finally, set $P^{\pm}_{{\alpha},f}=P^-_{{\alpha},f}\cup P^+_{{\alpha},f}$.

Next we need some more definitions in order to define vertical universality.

We still  assume that
$[a,b]$ denotes the closed interval with endpoints
$a$ and $b$ (even when $a>b$).

Assume $f\in C{\left[0,1\right]},$ and $r\in \left[0,1\right]$.

Put $P^+_{f}=\bigcup_{\alpha\in {\ensuremath {\mathbb R}}}\Pi_x(P^+_{\alpha,f}),P^-_{f}=\bigcup_{\alpha\in {\ensuremath {\mathbb R}}}\Pi_x(P^-_{\alpha,f}),
P^{\pm}_{f}=\bigcup_{{\alpha}\in {\ensuremath {\mathbb R}}}\Pi_x(P^{\pm}_{{\alpha},f})$ and
$P_{f}=\Pi_x(\bigcup_{{\alpha}\in {\ensuremath {\mathbb R}}}P_{{\alpha},f}\setminus P^{\pm}_{{\alpha},f})$.

We also put
$P^+_{f,r,\leq}=\{x\in\left[r,1\right]:x\in P^+_{f},f(x')\leq f(x),$ $\forall x'\in (r,x)\}$,
and $P^-_{f,r,\leq}=\{x\in\left[0,r\right]:x\in P^-_{f},f(x')\leq f(x),$
$\forall x'\in (x,r)\}$, one can define similarly the sets $P^+_{f,r,\geq},P^-_{f,r,\geq}$.\\
Next observe that
$$P^+_f=\bigcup_{r\in {\ensuremath {\mathbb Q}}\cap\left[0,1\right]}
(P^+_{f,r,\leq}\cup P^+_{f,r,\geq})\text{    and    }
P^-_f=\bigcup_{r\in {\ensuremath {\mathbb Q}}\cap\left[0,1\right]}(P^-_{f,r,\leq}\cup P^-_{f,r,\geq}).$$
It is not difficult to see that $f\mid_{P^+_{f,r,\leq}}$ is monotone increasing.
Hence,
being the graph of a function of bounded variation,
 the $ {{\cal H}}^1$-measure of $f|_{P_{{\alpha},r,\leq}^+}$
is finite. 
 From this one can easily infer that
the set of those
$(x_0;f(x_0))$ which are only one-sided
 accumulation points of $P_{{\alpha},f}$,
 that is the set $\cup_{{\alpha}\in {\ensuremath {\mathbb R}}}P_{{\alpha},f}^{\pm}$ is of
$\sigma $-finite-$ {{\cal H}}^1$ measure.

From Lemma 15 of \cite{BuRa} it follows that $ {{\mathcal {L}}}(P^+_f)=0$ and similarly $ {{\mathcal {L}}}(P^-_f)=0$.

Given $-1\leq x_1<x_2<...<x_ {\nu}\leq 1 ,$ we can consider finite sets of vertical line segments $ V_{x_1,...,x_ {\nu}}=\{(x_j;y)\in Q^2:y\in$
$ \left[-1,1\right],j\in \{1,..., {\nu}\} \}.$

 \begin{definition}\label{D32}
Assume $f\in C\left[0,1\right],x_0\in P^+_f$, that is, letting $ {\alpha}=f(x_0)$ the point
$(x_0;f(x_0))$ is a right-sided accumulation point  of the level set $L_{{\alpha},f}$.
We have {\it right vertical universality} at $x_0\in P^+_f$ if for any choice of
$0= x_1<x_2<...<x_ {\nu}\leq 1 $ we have
\begin{equation}\label{A6}
V_{x_1,...,x_ {\nu}} \in f_{MT}(x_0).
\end{equation}
\end{definition}

One can define similarly the {\it left vertical universality} at $x_0\in P^-_f$.

We have {\it two-sided vertical universality} at $x_0\in P_f$ if for any
$-1\leq x_1<x_2<...<x_ {\nu}\leq 1 $, $0\in\{x_1,...,x_ {\nu} \}$ we have \eqref{A6}.

If we have a closed set $F {\subset} [0,1]$ which contains $0$ then $F\times [-1,1]$ can be approximated in the
Hausdorff-metric by sets of the form $V_{x_1,...,x_ {\nu}} $ with $0=x_1$. Therefore right vertical universality is equivalent to the fact that
 at $x_0$ for any closed set $F {\subset} [0,1]$ containing $0$ we have  $F\times [-1,1]\in f_{MT}(x_0)$.
Similar statements hold for left-, or two-sided vertical universality, with $0\in F$
and $F {\subset} [-1,0]$ or $F {\subset} [-1,1]$, respectively.


 \begin{theorem}[Theorem 18 of \cite{BuRa}]\label{T2}
For the typical continuous function, $f$, for all $ {\alpha} \in  {\ensuremath {\mathbb R}}$ if $P_{{\alpha},f}\ne  {\emptyset}$
then there exists $\widehat{P}_{{\alpha},f} {\subset} P_{{\alpha},f}$, dense $ {{\cal G}}_{{\delta}}$ in $P_{{\alpha},f}$
such that if $x_0\in \pi_x(\widehat{P}_{{\alpha},f})$ then we have two-sided vertical
universality at $x_0$.
\end{theorem}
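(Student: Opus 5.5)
We will build the residual set as a countable intersection of dense open sets $\mathcal U_{k,j}$ in $C[0,1]$. Choose a countable dense family $\{V^{(j)}\}_{j\in\N}$ of finite vertical segment systems $V_{x_1,\dots,x_\nu}$ with rational $x_i$ and $0\in\{x_1,\dots,x_\nu\}$. For a fixed $\alpha$, the set of $x_0\in\Pi_x(P_{\alpha,f})$ with $V^{(j)}\in f_{MT}(x_0)$ for all $j$ is exactly the two-sided vertical universality set. Indeed, by the remark after Definition~\ref{D32}, $F\times[-1,1]$ with $0\in F$ is approximated by such systems, and $f_{MT}(x_0)$ is closed in the Hausdorff metric. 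For fixed $x_0$ this set can be written as
\[
\bigcap_{j}\bigcap_{k}\bigcup_{\delta<1/k}\{x_0 : d_H(F(f,x_0,\delta),V^{(j)})<1/k\}.
\]
Each inner union is open in $x_0$ (and jointly in $(f,x_0)$), because $F(f,x_0,\delta)$ depends continuously on $(f,x_0,\delta)$ in the Hausdorff metric at generic $\delta$. It is therefore enough to show density in $P_{\alpha,f}$ of each open set $G_{j,k}(f,\alpha)=\{x_0\in \Pi_x(P_{\alpha,f}) : \exists\,\delta<1/k,\ d_H(F(f,x_0,\delta),V^{(j)})<1/k\}$. The Baire category theorem in the closed set $P_{\alpha,f}$ then gives the dense $G_\delta$ set $\widehat P_{\alpha,f}$.

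The key step is to find a single residual set of $f$ that works for all $\alpha$ simultaneously. Fix $j,k$ and a rational box $U=(a,b)$. Define $\mathcal A_{j,k,U}$ to be the set of those $f$ for which the following holds: whenever some level $\alpha$ has an accumulation point of $L_{\alpha,f}$ in $U$ that is two-sided in a quantitative sense, there is a point $x_0\in U$ on that same level which is an accumulation point of $L_{\alpha,f}$ and satisfies $d_H(F(f,x_0,\delta),V^{(j)})<1/k$ for some small $\delta$. To make this open, I would replace it by a stable version. Say $f$ is \emph{$(j,k,U,\eta)$-good} if, for every $y$ in the range of $f|_U$ that is attained at least $\eta^{-1}$ times with spacing at most $\eta$, there is a small square $Q((x_0;y),\delta)$, $x_0\in U$, in which $\graph f$ is $1/(2k)$-close after rescaling to $V^{(j)}$. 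Moreover $f-y$ should change sign strictly across each of the vertical strands, so that the property survives small perturbations and the level $y$ still passes through the middle strand.

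Density is proved by perturbation. Given $g$, first approximate it by a piecewise linear $g_1$. At every local-extremum height plateau region, insert tiny ``comb'' gadgets. A comb is a piecewise linear function which, on a window of width $\delta$ around a point, oscillates with amplitude $\ge\delta$ at exactly the abscissas $x_0+\delta x_i$ and is almost flat elsewhere at height $y$. Since a piecewise linear $g_1$ has only finitely many monotone pieces, a countable (in fact finite, up to $\eta$) family of levels needs handling. Cover the range by finitely many heights $y_1,\dots,y_M$ with mesh much smaller than $\delta/k$, and place a comb on each monotone piece at height $y_i$. Then every level $y$ crossing $U$ passes within $\delta/(4k)$ of a comb whose central strand it crosses, which gives the required point $x_0$. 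Openness follows from the strict sign changes.

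Finally, take $f$ in $\bigcap_{j,k,U}\mathcal A_{j,k,U}$ (with $\eta\to0$), fix $\alpha$ with $P_{\alpha,f}\neq\emptyset$, and fix an accumulation point $p\in P_{\alpha,f}$ together with a neighbourhood $U\ni\Pi_x p$. Because $L_{\alpha,f}$ accumulates at $p$, the hypothesis of goodness is met for small $\eta$. This produces $x_0\in U$ on level $\alpha$ with the required zoom. Taking $x_0$ to be an accumulation point of $L_{\alpha,f}$, which holds since each strand crosses level $\alpha$ at infinitely many points in the gadget's iterated structure, shows that $G_{j,k}(f,\alpha)$ is dense, and we conclude as above. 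The main obstacle I expect is this last uniformity-in-$\alpha$ step. The quantifier ``for all $\alpha$'' is uncountable, so the gadget condition must be phrased through open conditions on finitely many heights. One must also ensure that the point found lies in $P_{\alpha,f}$ rather than merely on $L_{\alpha,f}$; I would handle this by nesting combs at the next scale inside each strand, obtained from the intersection over $k$.
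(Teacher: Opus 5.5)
\textbf{There is a genuine gap in the step that must work for all $\alpha$ at once, and the comb gadget is also misdescribed.}

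Your reduction is sound. Since $f_{MT}(x_0)$ is closed in the Hausdorff metric, it suffices to make each $G_{j,k}(f,\alpha)$ relatively open and dense in $P_{\alpha,f}$ and then apply Baire inside $P_{\alpha,f}$. For openness, do not rely on ``continuity at generic $\delta$'', because the graph intersected with $Q$ is only upper semicontinuous. Use an explicitly open window condition instead: the graph of $f$ over a slightly enlarged window lies in an open $\delta/(2k)$-neighbourhood of $(x_0;f(x_0))+\delta V^{(j)}$, and $f$ passes strictly from below $f(x_0)-\delta$ to above $f(x_0)+\delta$ within $\delta/(2k)$ of each $x_0+\delta x_i$.

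\textbf{The uniform-in-$\alpha$ step.} Your hypothesis ``$y$ is attained at least $\eta^{-1}$ times with spacing at most $\eta$'' measures multiplicity, not two-sidedness, so it does not keep $y$ away from the top of the range. By the BG-property, a typical $f$ has perfect level sets for all but countably many levels, in particular for levels $y$ just below $M=\max f$. If $U$ contains the maximum point, such levels satisfy your hypothesis in $U$. Yet at a level within $\delta/2$ of $M$ no window of scale $\delta$ exists anywhere, because the strands would have to reach above $M$. Hence any version of $(j,k,U,\eta)$-goodness with a uniform scale or robustness excludes typical functions. A version with $\delta=\delta(y)$ is not open, since an arbitrarily small perturbation can add many wiggles without comb structure near the top.

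Three ingredients are missing:
\begin{enumerate}
\item[(i)] Require the conclusion only for levels that are interior with a margin, $\min_{\bar U}f<y-\eta$ and $y+\eta<\max_{\bar U}f$. Obtain the dense open sets as unions of balls $B(\phi,r)$ around modified piecewise linear $\phi$, so that openness never has to be checked against newly created levels.
\item[(ii)] In the final step, use the typical fact that all local extrema are strict, and hence isolated in their level sets, rather than accumulation. Then $p\in P_{\alpha,f}$ is not a local extremum, so $\alpha$ is interior to $f(U)$ for every neighbourhood $U$ of $\Pi_x p$. This supplies the margin $\eta$.
\item[(iii)] Your plan to get $x_0\in P_{\alpha,f}$ by nesting combs ``via the intersection over $k$'' does not work, because the approximants at different stages are unrelated. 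Use instead that a typical $f$ has no points of increase or decrease. Then a sign change of $f-\alpha$ across the central strand forces infinitely many zeros there, hence a point of $P_{\alpha,f}$. The robust window does not depend on where along the strand $x_0$ sits.
\end{enumerate}

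\textbf{The gadget.} As described it gives the wrong picture. If the comb is ``almost flat at height $y$'' away from the abscissas $x_0+\delta x_i$, then $F(f,x_0,\delta)$ essentially contains the horizontal segment $[-1,1]\times\{0\}$. Its Hausdorff distance from $V^{(j)}$ is bounded below independently of $k$, at least half the largest gap between consecutive $x_i$. Between strands the graph must leave $Q((x_0;y),\delta)$, for instance by sitting at $y-A\delta$ with thin spikes up to $y+A\delta$.

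The counting also fails. With amplitude comparable to $\delta$ and heights of mesh $h\ll\delta/k$, combs placed where $g_1=y_i$ on a monotone piece of slope $s$ are only $h/s$ apart, while each needs width about $2\delta$. So they overlap unless $s\ll 1/k$. Use tall thin combs instead, with $A\gg 1$ and amplitude up to the approximation tolerance. One such comb serves every level within $(A-1)\delta$ of its base height at the same scale $\delta$, so finitely many disjoint combs per interval suffice. The levels they leave uncovered, those near the extremes of the modified range, are exactly the ones excluded by the margin in (i).
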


Denote by $VMT(f)$ the set of those $(x_0;f(x_0))$ for which we have two-sided vertical
universality. The next theorem and Theorem \ref{cglmtsf}  show that for the typical continuous function 
$VMT(f)$ is indeed much larger than $UMT(f)$ or $CGLMT(f)\supset UMT(f)$.

 \begin{theorem}[Theorem 19  of \cite{BuRa}] \label{thvmt} 
For the typical continuous function $VMT(f)$ is of non-$\sigma$-finite $ {{\cal H}}^1$-measure.
\end{theorem}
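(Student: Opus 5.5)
Our plan is to combine Theorem \ref{T2} with the Mauldin--Williams fact that the graph of the typical continuous function is not of $\sigma$-finite $\cah^1$-measure, and to show that the part of the graph not covered by $VMT(f)$ is $\sigma$-finite. Fix $f$ in the residual set where Theorem \ref{T2} and the Mauldin--Williams property hold, and split $\graph(f)$ into four parts. The first part consists of points $(x;f(x))$ that are isolated in their level set $L_{f(x),f}$. The second is $\bigcup_\alpha P^{\pm}_{\alpha,f}$, the one-sided accumulation points. The third is the set of two-sided accumulation points lying in $\widehat P_{\alpha,f}$, and this third set is contained in $VMT(f)$. The fourth is the set of two-sided accumulation points in $P_{\alpha,f}\setminus\widehat P_{\alpha,f}$.

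The second part is $\sigma$-finite, as shown before Definition \ref{D32}: it is covered by countably many graphs of monotone restrictions $f|_{P^{\pm}_{f,r,\le}}$ and $f|_{P^{\pm}_{f,r,\ge}}$ with $r\in\mathbb Q$. For the first part, a point $x$ isolated in its level set has a rational interval $(r,s)\ni x$ on which $f(x')\neq f(x)$ for $x'\ne x$. We then cut further by the sign pattern of $f-f(x)$ on $(r,x)$ and on $(x,s)$. For example, $x$ may be a strict maximum on $(r,s)$, or $f-f(x)$ may have constant, opposite signs on the two sides. In each of these countably many classes, the map $x\mapsto f(x)$ restricted to the class is monotone, or is at most two-to-one into strict local extrema. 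We expect to show that each class is a monotone graph or a countable union of such graphs. The strict-extremum case reduces, via two points of the class in $(r,s)$, to the observation that their values coincide only countably often, and this gives injectivity up to countable sets and monotonicity. Hence the first part is $\sigma$-finite as well.

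It would then remain to show that the fourth part, the points of $P_{\alpha,f}\setminus\widehat P_{\alpha,f}$ taken over all $\alpha$, cannot carry non-$\sigma$-finite measure. This is the main obstacle. Theorem \ref{T2} only gives genericity inside each level set, and a meagre subset of each level set may still be large uniformly in $\alpha$. Our first attempt is to strengthen the Baire argument behind Theorem \ref{T2} directly. For fixed $\nu$, rational points $x_1<\dots<x_\nu$ with $0$ among them, and $k\in\N$, let $A_{k}(f)$ be the set of two-sided level-accumulation points at which no $\delta<1/k$ makes $F(f,x_0,\delta)$ lie within $1/k$ of $V_{x_1,\dots,x_\nu}$. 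We would show that for typical $f$ each $A_k(f)$ is $\sigma$-finite, and in fact small under $\cah^1$. The perturbation would replace $f$ near each small oscillation by steep, nearly vertical sawtooth pieces realizing the prescribed vertical segments. A closed set of bad points then has its level structure controlled by a covering of size $O(\varepsilon)$ in $\cah^1_\delta$. With only countably many such sets, the complement of $VMT(f)$ becomes $\sigma$-finite, and $VMT(f)$ inherits non-$\sigma$-finiteness from the graph.

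If this direct estimate fails, the fallback is to build $VMT(f)$ inside explicitly. We would construct, for a dense $G_\delta$ set of $f$, a Cantor-type subset of the graph made of two-sided accumulation points with vertical universality, arranged along nested level-set families. Its $\cah^1$ measure would be shown to be non-$\sigma$-finite by a mass distribution argument adapted to a gauge slightly larger than $t$, following the Mauldin--Williams construction.
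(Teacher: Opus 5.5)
Your reduction breaks down at the fourth part, and better estimates will not repair it: for the typical $f$ the set you want to prove $\sigma$-finite is not $\sigma$-finite.

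\textbf{Why the fourth part is not $\sigma$-finite.} By Theorem \ref{T3}, for every $\alpha\in(\min f,\max f)$ the nonempty perfect set $P_{\alpha,f}$ contains a c-dense set of points without two-sided vertical universality. The only one-sided accumulation points of $L_{\alpha,f}$ are the countably many endpoints of its complementary intervals. So continuum many of these bad points are two-sided accumulation points. Hence your fourth part meets every horizontal line $y=\alpha$, $\alpha\in(\min f,\max f)$, in an uncountable set.

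On the other hand, every $A\subset\mathbb R^2$ satisfies the slicing (Eilenberg) inequality $\int^{*}\#\bigl(A\cap\{y=\alpha\}\bigr)\,d\alpha\le C\,\mathcal H^1(A)$. Consequently a set of finite $\mathcal H^1$-measure meets almost every horizontal line in finitely many points, and a $\sigma$-finite set meets almost every horizontal line in a countable set. So the fourth part is not $\sigma$-finite. It is the countable union of your sets $A_k(f)$ over rational configurations, so some $A_k(f)$ is non-$\sigma$-finite for the typical $f$. No strengthening of the Baire argument can give the estimate you claim. The danger you yourself identified, meagre pieces of level sets being large uniformly in $\alpha$, actually occurs. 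Your treatment of the first two parts is fine; for typical $f$ the first part is even countable by the BG-property.

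\textbf{The missing idea.} Apply the same slicing inequality to $VMT(f)$ itself rather than to its complement. Then neither the Mauldin--Williams theorem nor any control of the complement is needed.
For typical $f$, the BG-property makes $P_{\alpha,f}$ a nonempty perfect compact set for every $\alpha\in(\min f,\max f)$. By Theorem \ref{T2}, $\widehat P_{\alpha,f}$ is a dense $G_\delta$ subset of it, and it is contained in $VMT(f)\cap\{y=\alpha\}$. It is therefore uncountable, since a countable subset of a perfect compact space is meagre.
Now suppose $VMT(f)\subset\bigcup_n S_n$ with $\mathcal H^1(S_n)<\infty$. Then for almost every $\alpha$ all slices $S_n\cap\{y=\alpha\}$ are finite. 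So $VMT(f)\cap\{y=\alpha\}$ would be countable for almost every $\alpha$, which contradicts $\min f<\max f$.

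\textbf{The fallback.} It is unnecessary, and as formulated it would not work either. Positivity of $\mathcal H^h$ for a gauge with $h(t)/t\to\infty$ says nothing about $\sigma$-finiteness of $\mathcal H^1$. For any fixed gauge with $h(t)/t\to 0$, the functions with $\mathcal H^h(\mathrm{graph}(f))=0$ form a residual set. Indeed, the condition $\mathcal H^h_\infty(\mathrm{graph}(f))<1/n$ is open, and piecewise linear functions satisfy it. So a fixed-gauge mass distribution argument cannot succeed on a residual set of $f$.
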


However, there are still  points on the graph of the typical continuous function which were not discussed so far.
The next theorem is about a $c$-dense set
in the level set where we do not have two-sided universality.

 \begin{theorem}[Theorem 20  of \cite{BuRa}]\label{T3}
For the typical continuous function $f$, for all $ {\alpha}\in  {\ensuremath {\mathbb R}}$,
 if $P_{{\alpha},f}\ne  {\emptyset}$ then
there exists $ {{\widehat {P}}}_{{\alpha},f}' {\subset} P_{{\alpha},f}$, c-dense in $P_{{\alpha},f}$ such that if
$x_0 \in \pi_x( {{\widehat {P}}}_{{\alpha},f}')$ then we do not have two-sided vertical universality at $x_0$.
\end{theorem}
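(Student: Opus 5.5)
We will prove \cref{T3} by a Baire category argument carried out over a countable family of ``witness'' configurations. The idea is to build into the typical function, at a dense set of locations of every nondegenerate level set, a local shape that cannot be zoomed into a two-sided vertical configuration.

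The obstruction we plan to use is a \emph{one-sided blocking} property. Suppose that at $x_0\in P_{\alpha,f}$ there are $\eta>0$ and a sequence of scales $\delta\searrow 0$ along which $f(x)>\alpha$ for all $x\in(x_0-\eta,x_0)$, or more weakly that $|f(x)-\alpha|$ stays comparable to a fixed fraction of the oscillation. Then the left part of every micro tangent set degenerates. For instance, if $f>\alpha$ on a left neighbourhood of $x_0$, then every $F\in f_{MT}(x_0)$ meets the lower half $[-1,0)\times[-1,0)$ only in $\{0\}\times[-1,0)$. Consequently $V_{-1/2,0}$, which contains $(-1/2;-1/2)$, is not in $f_{MT}(x_0)$, and two-sided vertical universality fails at $x_0$. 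The goal is therefore to produce, for the typical $f$ and every $\alpha$, a $\mathfrak c$-dense set of points $x_0\in\Pi_x(P_{\alpha,f})$ that are accumulation points of $L_{\alpha,f}$ from one side only and local extrema from the other side. More precisely, we want $x_0$ to be a right accumulation point of $L_{\alpha,f}$ with $f\ge\alpha$ on some $(x_0-\eta,x_0]$. Note that such points need not be in $P^{\pm}_{\alpha,f}$, so they genuinely lie in $P_{\alpha,f}$ only when two-sidedness also holds. To keep this safe we will instead take points where $L_{\alpha,f}$ accumulates from both sides but $f-\alpha$ does not change sign on the left.

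The construction has three steps.

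\emph{Step 1.} Fix rational data: an interval $I$ with rational endpoints and $n\in\N$. Define the open dense set $\mathcal U_{I,n}$ of those $f$ for which, roughly, every level $\alpha$ whose level set has an accumulation point in $I$ also contains, within $1/n$ of it, a nested Cantor-type configuration of level crossings on which the left sides are ``tents'' above $\alpha$. Density comes from the standard perturbation: approximate $f$ by a piecewise linear function and insert, near each level, small sawtooth gadgets whose left flanks stay on one side of the level.

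\emph{Step 2.} Use openness with tolerance margins. Gadgets built with a strict margin survive small uniform perturbations, so the conditions can be arranged to be open.

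\emph{Step 3.} Intersect over all $(I,n)$ and pass to the limit. In each level set this yields a perfect set of points where the blocking holds at infinitely many scales, and blocking at infinitely many scales is already enough to exclude $V_{-1/2,0}$ along every scale sequence. Finally, $\mathfrak c$-density follows because the nested gadget choices branch binarily at each stage.

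The main obstacle is uniformity in $\alpha$. Every level must be handled simultaneously, and the set of levels is uncountable, while a $G_\delta$ construction only handles countably many conditions. We will attack this by a compactness argument: a finite grid of levels at spacing $\varepsilon_n$ suffices, provided each gadget covers a whole band of levels of width greater than $\varepsilon_n$. Covering a band means the tent's left flank stays above the band's top while the right side oscillates through the entire band. We also need that, for the typical $f$, every $\alpha$ with $P_{\alpha,f}\ne\emptyset$ has accumulation points close to the gadgets. For this we expect to use the same density of level-set structure that underlies \cref{T2}, so the new gadgets must be made compatible with that construction.
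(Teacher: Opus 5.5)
There is a genuine gap: your obstruction cannot produce $\mathfrak c$-density, and the fallback in Step 3 rests on a logical error.

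\textbf{The sign condition only gives countably many points per level.} Your blocking observation is correct, but if $f>\alpha$ on $(x_0-\eta,x_0)$, then $x_0$ is not a left accumulation point of $L_{\alpha,f}$. So $x_0$ is the right endpoint of a component of $[0,1]\setminus\Pi_x(L_{\alpha,f})$, or the smallest point of the level set. Each level therefore has only countably many such points, and a countable set cannot be $\mathfrak c$-dense in $P_{\alpha,f}$. At these points two-sided universality fails for a trivial reason. If $V_{-s,0}\in f_{MT}(x_0)$ along $\delta_n\searrow 0$, the rescaled graphs come close to both $(-s;1)$ and $(-s;-1)$. By the intermediate value theorem, $f=\alpha$ somewhere within $o(\delta_n)$ of $x_0-s\delta_n$. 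So two-sided universality already forces two-sided accumulation.

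\textbf{The ``safe'' variant is empty for the typical $f$.} Suppose $L_{\alpha,f}$ accumulates at $x_0$ from the left while $f-\alpha\ge 0$ on $(x_0-\eta,x_0)$. Then the infinitely many level points in $(x_0-\eta,x_0)$ are all local minima of $f$ with the common value $\alpha$. But the typical continuous function has pairwise distinct local extremum values: for disjoint rational intervals $I,J$, the set $\{f:\min_I f=\min_J f\}$ is closed and nowhere dense in $C[0,1]$. Imposing a sign condition for all small $\delta$ on fixed proportional windows $[x_0-a\delta,x_0-b\delta]$ changes nothing, since their union is a full left neighbourhood of $x_0$.

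\textbf{Blocking at infinitely many scales is not enough.} You claim in Step 3 that blocking at infinitely many scales excludes $V_{-1/2,0}$ along every scale sequence; this is false. Membership $V\in f_{MT}(x_0)$ needs only one sequence $\delta_n\searrow0$ with $F(f,x_0,\delta_n)\to V$. Tents at a sparse sequence of scales say nothing about the intermediate scales, where $V_{-1/2,0}$ may well be realised; by Theorem~\ref{T2} this is what happens at generic points of $P_{\alpha,f}$. To exclude $V$ you need the Hausdorff distance between $F(f,x_0,\delta)$ and $V$ to stay bounded below for all sufficiently small $\delta$.

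\textbf{What is missing.} You need an obstruction that is uniform in the scale yet compatible with $f-\alpha$ changing sign arbitrarily close to $x_0$ on both sides. Two possible kinds:
\begin{itemize}
\item A cone bound $f(x)\ge\alpha-\gamma(x_0-x)$ on $(x_0-\eta,x_0)$. Every rescaled picture then lies in $\{v\ge\gamma u\}$ over $[-1,0]$, so $(-s;-1)$ is never approached and $V_{-s,0}\notin f_{MT}(x_0)$ when $\gamma s<1$.
\item A condition that puts a point far from $V$ into every rescaled picture, such as $\Pi_x(L_{\alpha,f})\cap[x_0-\delta,x_0-\tfrac34\delta]\ne\emptyset$ for all small $\delta$. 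This places some $(u;0)$ with $u\in[-1,-\tfrac34]$ into every $F(f,x_0,\delta)$, so $V_{-1/2,0}$ is excluded.
\end{itemize}
The real content of the theorem is then to show that the typical $f$ has $\mathfrak c$ many such points in every portion of every nondegenerate level set, simultaneously for all $\alpha$. Your gadgets are built around one-sided tents, and neither they nor the band-compactness sketch address this.
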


Although Theorem \ref{T3} is about some exceptional points next we see that there are not that many such points. They are not only of zero Hausdorff dimension but they are of measure zero for any given fixed gauge function. 
Recall that  $ {\phi}:[0,+ {\infty})\rightarrow [0,+ {\infty}]$ is called a
Hausdorff gauge function if it is monotone
increasing, continuous from the right and $ {\phi}(x)>0$ for $x>0$. (See
for example \cite{Rog}.)
By results of Kirchheim \cite{Kir}
 for the typical continuous function $L_{{\alpha},f}$ is of zero Hausdorff
dimension. In fact, if $ {\phi}$ is an arbitrary fixed gauge function satisfying
$  \displaystyle  \lim_{x\rightarrow 0+} {\phi}(x)=0$ then $ {{\cal H}}^{{\phi}}(L_{{\alpha},f})=0$. It is worth remarking that it is important to observe that Kirchheim's result and our Theorem \ref{T4} is about one arbitrary, but fixed gauge function $\phi$. A set $S$ is called of strong dimension zero if ${{\cal H}}^{{\phi}}(S)=0$ for all gauge functions $\phi$. It is obvious that Kirchheim's result cannot be strengthened to have strong Hausdorff dimension zero since there are models of ZFC (see Laver \cite{Lav76}) in which Borel's conjecture is true, that is each strong measure zero set is countable, while level sets in the interior of the range of a typical continuous function are uncountable (of cardinality continuum). It is also worth mentioning that Borel's conjecture is independent under ZFC. Using the continuum hypothesis  Sierp\'niski
 \cite{Sier28} proved that there exist uncountable strong measure zero sets. 
 
  Next is the theorem about vertical universality and small exceptional sets.

 \begin{theorem}[Theorem 21  of \cite{BuRa}]\label{T4}
Assume $ {\phi}$ is a Hausdorff gauge function $ {\phi}$ with $  \displaystyle  \lim_{x\rightarrow 0+} {\phi}(x)=0$. For the
typical continuous function, $f\in C[0,1]$ there exists $E_f$ with $ {{\cal H}}^{{\phi}}(E_f)=0$
such that\\
(i) if $ {\alpha} \not \in E_f$ then $f$ has right, or left vertical universality at $(x_0; {\alpha})$ for all
$(x_0; {\alpha})\in P_{{\alpha},f}^+$, or $P_{{\alpha},f}^-$, respectively;\\
(ii) if $ {\alpha}\in E_f$ then there is one exceptional point $(x_0^*; {\alpha})\in P_{{\alpha},f}^+\cup P_{{\alpha},f}^-$
where $f$ does not have right, or left vertical universality and for other
$(x_0; {\alpha})\in P_{{\alpha},f}^+\cup P_{{\alpha},f}^-$ (i) holds.
\end{theorem}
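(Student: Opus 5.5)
The plan is a Baire category argument. Fix the gauge function $\phi$ with $\phi(x)\to0$, and split right (or left) vertical universality into countably many quantitative conditions. Fix a finite configuration $0=x_1<\dots<x_\nu\le 1$ with rational $x_j$, a tolerance $1/k$ and a scale bound $1/m$. Say $(x_0;\alpha)\in P^+_{\alpha,f}$ is \emph{$(x_1,\dots,x_\nu,k,m)$-good} if there is $\delta<1/m$ with
\[
d_H\bigl(F(f,x_0,\delta),V_{x_1,\dots,x_\nu}\bigr)<1/k .
\]
Right vertical universality at $x_0$ is equivalent to goodness for all rational configurations and all $k,m$. This follows from the Hausdorff approximation remark after Definition \ref{D32} and from the continuity of the target $V_{x_1,\dots,x_\nu}$ in the parameters $x_j$.

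For each such countable index, let $E_f(x_1,\dots,x_\nu,k,m)$ be the set of levels $\alpha$ for which $P^+_{\alpha,f}$ contains at least two non-good points. The aim is then to prove the following. For each index and each $\varepsilon>0$, the set of $f$ for which this exceptional level set admits a cover $\{U_i\}$ with $\sum\phi(|U_i|)<\varepsilon$ contains a dense open set. Intersecting over all indices and all $\varepsilon=1/n$ gives a residual class. For $f$ in this class, the union $E_f$ of the exceptional sets has $\mathcal H^\phi(E_f)=0$. Outside $E_f$ every point of $P^+_{\alpha,f}$ is good at every index, which gives (i). Inside $E_f$ at most one point can fail some index, and this gives (ii). One must still make sure the single exceptional point is the same across all indices. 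For this I would define the exceptional set as the levels carrying at least two points that fail \emph{some} index, and use the union bound, pairing failures of different indices.

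Density goes as in Kirchheim's argument \cite{Kir}. Start from a piecewise linear $g$ with finitely many nondegenerate monotone pieces, and modify it at a tiny scale. Near each point I insert a rigid oscillating ``comb'' of steep zigzags. At ratio $x_j$ of some small scale these zigzags make near-vertical excursions of full relative height, and elsewhere the comb stays nearly constant. Then any $(x_0;\alpha)$ that is a one-sided accumulation point of a level set lands in a comb pattern, and the picture seen there is close to $V_{x_1,\dots,x_\nu}$. Openness needs care, because goodness at one scale must be stable under uniform perturbation. I would therefore use an open condition in which the approximation has strict inequality and $\delta$ is in a fixed compact range, and I would apply it only to points whose level is not among finitely many critical values of the piecewise linear skeleton.

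The main obstacle is the \emph{one exceptional point} structure. Along a level, the failure of right universality at $x_0$ forces $f\le\alpha$ (or $\ge\alpha$) on a right neighbourhood, in the sense of the sets $P^+_{f,r,\le}$. These sets are graphs of monotone functions, and only the levels where two such one-sided maximal points coexist are bad. I would control these levels by the gauge-small cover. The cover comes from the finitely many local extrema of the perturbed skeleton, each surrounded by an interval of length $\eta$ with $\phi(\eta)\cdot\#\{\text{extrema}\}<\varepsilon$. Since the number of extrema is fixed before $\eta$ is chosen, such an $\eta$ exists because $\phi(x)\to0$. I expect most of the technical difficulty to be in showing that, away from these intervals, each level has at most one one-sided accumulation point that escapes the combs.
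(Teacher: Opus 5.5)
Since the survey only quotes this theorem from \cite{BuRa} without proof, I am judging your plan against the statement itself. There it has a genuine gap in how you obtain (i) and (ii).

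You define $E_f(x_1,\dots,x_\nu,k,m)$ as the set of levels carrying \emph{at least two} non-good points. You then conclude that outside the union $E_f$ every point of $P^+_{\alpha,f}$ is good, and that inside $E_f$ at most one point fails. Neither conclusion follows. Outside your $E_f$ a level may still carry one non-good point, so (i) can fail there. Inside your $E_f$ there are, by definition, at least two non-good points, which is the opposite of (ii).

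The theorem actually needs two different statements:
\begin{itemize}
\item[(a)] the set of levels carrying \emph{at least one} point without right/left vertical universality is $\mathcal H^\phi$-null;
\item[(b)] \emph{no} level at all carries two such points, counting all indices simultaneously.
\end{itemize}
Your scheme of covers with $\sum\phi(|U_i|)<\varepsilon$ could at best give (a), after you redefine the exceptional set. The density step must then show that \emph{no} quantitatively one-sided point with a non-excluded level escapes the combs, not ``at most one''. Statement (b) is an emptiness statement for every $\alpha$, and no $\phi$-small cover can produce it. Showing that the levels with two bad points are $\mathcal H^\phi$-null does not exclude them.

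For (b) you need a separate residual-set argument of Bruckner--Garg type, analogous to the proof that a typical function takes distinct values at distinct local extrema. First, identify a closed, quantitative condition implied by failure. The remark after the theorem gives the model. At a local maximum $x_0$ of $f+\gamma x$ with $\gamma\neq0$, the graph near $x_0$ lies in the wedge $\{y\le\alpha+|\gamma|\,|x-x_0|\}$ (reflected for minima). This wedge prevents full vertical segments at relative positions $0<|x_j|<1/|\gamma|$. Second, show that for disjoint rational intervals $I,J$, the set of $f$ having $x\in I$, $x'\in J$ with $f(x)=f(x')$, both satisfying this condition, is closed and nowhere dense. For a steep piecewise linear approximant such points sit only near local-extremum vertices, whose values can be made distinct.

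Your heuristic for the one-point structure does not supply this. Every point of $P^+_{\alpha,f}$ has $f-\alpha$ of constant sign on a \emph{left} gap, whether or not universality holds there. That is exactly what the sets $P^+_{f,r,\le}$ and $P^+_{f,r,\ge}$ record; failure does not force a sign condition on the right. Moreover, for typical $f$ and all but countably many $\alpha$, the level set is perfect and nowhere dense. So $P^+_{\alpha,f}$ contains the right endpoints of all its infinitely many gaps. ``Levels where two such one-sided points coexist'' is therefore essentially every level, not a small set. The failure points must be singled out by a different, wedge-type property.
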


If one considers level sets of typical continuous functions it is worth to say a few words about isolated points on the level sets.
We recall a theorem of  Bruckner and Garg  \cite{BruGa} which states that a  typical
continuous function $f$ in $C \left [0,1 \right]$ satisfies the BG-property.
 Denoting by $min_f$ and $Max_f$ the
minimum and maximum of an $f\in C[0,1]$ 
we have the following for the typical continuous function:
{\it
 \begin{itemize}
\item there exists a denumerable dense set $S_f$ in $(min_f,Max_f)$ such that if
$ {\alpha} \not\in S_f \cup  \{min_f,Max_f\}$ then $L_{{\alpha},f}=P_{{\alpha},f}$ which is nowhere
dense and perfect,
\item $L_{{\alpha},f}$ is a single point if
$ {\alpha}\in \{min_f, Max_f\}$,
\item if $ {\alpha} \in S_f$ then $L_{{\alpha},f}$ is the union of a single point and
$P_{{\alpha},f} \ne \emptyset$, a nowhere dense and perfect set. This isolated point on the level set corresponds to a local extremum.
\end{itemize}
}

The set of functions satisfying the BG-property will be denoted by $BG$.

Actually the result in  \cite{BruGa} about the total level set structure of typical continuous functions is also very nice:
For $f\in C \left [0,1 \right]$ and $ {\gamma}\in  {\ensuremath {\mathbb R}}$ put $f_{{\gamma}}(x)=f(x)+ {\gamma} x$.
{\it For the typical continuous function there exists $ {\Gamma}_f$, a countable dense subset in
$ {\ensuremath {\mathbb R}}$ such that if $ {\gamma} \in  {\ensuremath {\mathbb R}} \setminus  {\Gamma}_f$, $f_{{\gamma}}$ has the BG-property and
if $ {\gamma} \in  {\Gamma}_f$ then $f_{{\gamma}}$ meets all
conditions of the BG-property, but there is one exceptional  level set which contains
two isolated points instead of one.}

Returning to Theorem \ref{T4} 
we remark that
all extrema of $f_{{\gamma}}$ with $ {\gamma}\ne 0$ belong to $E_f$ showing that
the exceptional
points are $c$-dense on the graph of $f$.

\subsection{Micro tangent sets of the Takagi function and of the Brownian motion.} 
\label{subsecmicrotak}

Recall from Section \ref{subsectak} the definition of the Takagi function. 
This is one of the well-known examples of nowhere differentiable functions,
however its H\"older spectrum is very simple, it is monofractal with Hölder exponent $1$ everywhere see \cite{Jaff97}
Section 6. This also means that although Takagi's function is nowhere differentiable, it is not too far from a differentiable function which has a Hölder exponent greater equal than $1$.

 \begin{theorem}[Theorem 10 of \cite{Bumicro}]\label{takagi}
For
almost every $x_{0}\in  {\ensuremath {\mathbb R}}$,
$(x_{0}, {{T}}(x_{0}))$ is a graph like micro tangent point of the Takagi
function, $ {{T}}(x)$. In fact, this function is ``micro self-similar"
in the sense that if we take $g= {{T}}|_{[-1,1]}$ then
$\graph(g)\in  {{T}}_{MT}(x_{0})$ for almost every $x_{0}\in  {\ensuremath {\mathbb R}}$.
\end{theorem}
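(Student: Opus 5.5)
The plan is to use the same random-walk description of the dyadic digits as in the proof sketch of Theorem \ref{thtakirrdec}, now with a window of fixed height. By the rescaling property, whenever $x_0$ lies in a dyadic interval $I$ of length $2^{-N-1}$ on which $G_N'=0$, the graph of $T$ over $I$ is a translated copy of the graph of $T$ scaled by $2^{-N-1}$. When $G_N'(x_0)=s\neq 0$, the same identity gives over $I$ a scaled copy of $T$ plus an affine function of slope $s$. So we need zoom levels with slope zero, and we also need the relative position of $x_0$ inside the dyadic interval to be suitable.

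\emph{Setup.} Let $g=T|_{[-1,1]}$, with $T$ extended $1$-periodically. This extension is the natural meaning, since $T(x)=\sum_n 2^{-n}\|2^n x\|$ is defined on all of $\R$; moreover $T(0)=0$, so $g\in C[-1,1]_0$. Write $x_0=\sum_k d_k2^{-k}$ and let $S_N=\sum_{k\le N}(2d_k-1)$. Then $G_N'$ on the level-$N$ dyadic interval containing $x_0$ is, up to sign, $S_N$. If $S_N=0$ and $I=[a,a+2^{-N}]\ni x_0$ is the corresponding dyadic interval, then for $x\in I$ we have $T(x)=T(a)+2^{-N}T(2^N(x-a))$. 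Consequently $T(x)-T(x_0)=2^{-N}\big(T(u)-T(u_0)\big)$ with $u=2^N(x-a)$ and $u_0=2^N(x_0-a)$.

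\emph{Key step: a good window.} We want a scale $\delta=2^{-N}\eta$ at which the blown-up graph in the window $Q((x_0;T(x_0)),\delta)$ is close to $\graph(g)$. First, at indices where $S_N=0$, the window of half-width $2^{-N}$ around $x_0$ need not lie inside $I$. To handle this we use a larger interval: take $M<N$ with $S_M=0$ as well, or simply use periodicity. Since $S_N=0$, the function $T-G_N$ is $2^{-N}T(2^N\cdot)$ on \emph{all} of $[0,1]$, not just on $I$, because $g_k$ for $k>N$ is $2^{-N}$-periodic and so $T(x)=G_N(x)+2^{-N}T(2^Nx)$ everywhere. Near $x_0$, $G_N$ is locally constant only on $I$; on the neighbouring dyadic intervals its slope is $\pm1$. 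We therefore want $x_0$ deep inside a flat stretch of $G_N$. This happens if the walk also had $S_{N-j}=0$ for many $j$, so that $G_N$ is flat on a larger interval. An alternative is to note that near a point where $S_N=0$, the neighbouring intervals correspond to walk values $S_N=\pm2$ or similar.

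Second, there is a cleaner route. Require $S_{N}=0$ together with the condition that the next digits of $x_0$ satisfy $u_0=2^Nx_0-\lfloor 2^Nx_0\rfloor\in(0,\epsilon)$, while the walk stays flat enough nearby; then $(u_0;T(u_0))$ is close to $(0;0)$. With $\delta=2^{-N}\cdot$(a suitable constant), the rescaled graph is then close in the Hausdorff metric to $\graph(T)$ near the origin, shifted by $O(\epsilon)$ plus an error term coming from the slope of $G_N$ on adjacent intervals, multiplied by the window size.

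\emph{Almost sure occurrence.} The digits are i.i.d.\ fair bits. The event "$S_N=0$, the next $m$ digits are $0$, and the preceding $m$ digits form a pattern making $G_N$ flat on a $2^{-N+m}$-neighbourhood" is a cylinder event of positive probability $\rho_m$. By the recurrence of the walk (P\'olya) and the strong Markov property, this event occurs at infinitely many $N$ almost surely. Taking $m\to\infty$ along a countable sequence gives the Hausdorff convergence required in Definition \ref{defMT}.

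The main obstacle is choosing the local digit pattern so that $G_N$ is genuinely constant on an interval containing $[x_0-2^{-N},x_0+2^{-N}]$ (equivalently, handling neighbouring intervals where the slope is nonzero), and controlling the vertical window cutoff. I would first try choosing the preceding digits as alternating $0101\ldots$ and the following digits to place $x_0$ near the left endpoint. If flatness on both sides fails, I would instead compare the rescaled graph with $\graph(T)$ on $[-1,1]$ directly via the identity $T(x)=G_N(x)+2^{-N}T(2^Nx)$, and bound $|G_N(x)-G_N(x_0)|$ on the window by $|S|$-values within $m$ steps of $0$ times $2^{-N-m}$. This error is $o(2^{-N})$ because those walk values are bounded by $m$ while the interval length is $2^{-N-m}$, with $m$ fixed large.
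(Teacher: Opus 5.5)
Your skeleton is the right one: the identity $T=G_N+2^{-N}T(2^N\cdot)$, recurrence of the digit walk, and a positive-probability digit pattern that recurs almost surely by the strong Markov property. But the step you call the ``main obstacle'' is the actual content of the theorem, and what you say about it is wrong.

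Since $g=T|_{[-1,1]}$ contains a full period of $T$ on \emph{each} side of $0$, you must zoom with half-width exactly $\delta=2^{-N}$ near a point $a=k2^{-N}$. So the left half of the window is the whole neighbouring interval $[a-2^{-N},a]$, of length $2^{-N}$, not $2^{-N-m}$. If $G_N$ has slope $s$ there, the rescaled left half is $\{(v;T(v)+sv):v\in[-1,0]\}$, up to $o(1)$ and the vertical cut-off. Since $s$ is an integer, $s\ne 0$ costs a Hausdorff error of order $1$. Hence your fallback bound (``$o(2^{-N})$ because the interval length is $2^{-N-m}$'') is false, and exact flatness of $G_N$ on a neighbouring interval cannot be avoided.

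Your description of the neighbours is also inaccurate: their slopes are even integers, not $\pm1$. If the $N$-digit binary string of $k$ ends with a $1$ followed by exactly $j$ zeros, then $k-1$ ends with a $0$ followed by $j$ ones. So when $S_N=0$ the slope of $G_N$ on $[a-2^{-N},a]$ is $2-2j$, which vanishes iff $j=1$. Moreover, no flat stretch of $G_N$ covers three consecutive level-$N$ intervals, because no three consecutive integers have the same number of binary ones. So your cylinder event with flatness on a $2^{-N+m}$-neighbourhood is empty for $m\ge 2$. An alternating prefix helps only if it ends in $10$, not $01$.

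The missing condition is simply $d_{N-1}=1$, $d_N=0$. Suppose $S_N=0$, $d_{N-1}=1$ and $d_N=d_{N+1}=\dots=d_{N+m}=0$. Then:
\begin{itemize}
\item $G_N$ is constant on $[a-2^{-N},a+2^{-N}]$, so $T(x)-T(a)=2^{-N}T(2^N(x-a))$ exactly there, using $1$-periodicity and $T(0)=0$.
\item $x_0-a=2^{-N}u_0$ with $0\le u_0\le 2^{-m}$ and $T(u_0)\le (m+1)2^{-m}$.
\item The overshoot $[a+2^{-N},x_0+2^{-N}]$ has length at most $2^{-N-m}$ and lies where $G_N$ has slope $-2$.
\end{itemize}
With $\delta=2^{-N}$ this gives Hausdorff distance $O(m2^{-m})$ between $F(T,x_0,\delta)$ and the graph of $g$. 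Since $0\le T\le 2/3$, the vertical cut-off of the window removes nothing.

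The event is ``$S_{N-2}=0$, followed by one digit $1$ and then $m+1$ digits $0$''. Take returns of the walk to $0$ spaced at least $m+2$ apart; the strong Markov property makes the following blocks i.i.d.\ uniform. So the event occurs for infinitely many $N$ almost surely, and intersecting over $m\in\mathbb{N}$ finishes the proof. Symmetrically, $d_{N-1}=0$, $d_N=\dots=d_{N+m}=1$, with $x_0$ just left of $a+2^{-N}$ and zooming there, also works.
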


In \cite{Bumicro} other functions were also studied.
Assume that $[W(t):t\geq 0]$ denotes the Brownian motion. Since the definition of a micro tangent set is local we can consider micro tangent points of functions in $C[0,+ {\infty}]$.
It is well-known that the Hausdorff dimension of the Brownian motion path is $3/2$  almost surely, this shows that the Brownian motion path is ``wilder'' then the graph of the typical continuous function which, as we mentioned earlier, is of Hausdorff dimension $1$. This is also reflected on the micro tangents sets:

 \begin{theorem}[Theorem 9 of \cite{Bumicro}] \label{bmo}
For almost every Brownian motion path $W(t),$ from $F\in W_{CMT}(t)$
$(t>0)$ it follows that $F {\subset} S_{0} {{\buildrel {\rm def} \over =}}  \{(0;y):|y|\leq 1  \}$.
Therefore, $CGLMT(W)= {\emptyset}$ and $UMT(W)= {\emptyset}$ with probability one.
\end{theorem}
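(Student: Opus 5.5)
The plan is to show that at every $t>0$ the central components of the rescaled graphs of $W$ become vertically thin: their horizontal width tends to $0$ along any sequence $\delta_n\searrow 0$. The tool is a uniform modulus of continuity from below, namely a uniform lower bound on oscillations. The classical result I would invoke is the Kahane--Orey type estimate. With probability one, for every $T>0$ there is $h_0(\omega)>0$ such that for all $0\le s<s+h\le T$ with $h<h_0$,
\[
\sup_{u,v\in[s,s+h]}|W(u)-W(v)|\ \ge\ c\sqrt{h/\log(1/h)} .
\]
This follows from a Borel--Cantelli argument over dyadic intervals. For a fixed interval of length $h$, $P(\mathrm{osc}<\varepsilon\sqrt h)\le e^{-c'/\varepsilon^2}$ by the small ball estimate. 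Taking $\varepsilon=c/\sqrt{\log(1/h)}$ and summing over the $\sim 2^n$ dyadic intervals of length $2^{-n}$ gives a convergent series. One then passes from dyadic intervals to arbitrary intervals by noting that every interval of length $h$ contains a dyadic one of length $\ge h/4$. The only consequence I need is the much weaker, path-wise statement that $\mathrm{osc}_{[s,s+h]}W\ge h^{1/2+\eta}\gg h$ uniformly for small $h$.

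Now fix such an $\omega$, a point $t>0$ and $\delta$ small, and write $y_0=W(t)$. The central component $CENT(F(W,t,\delta))$ is, after rescaling, the connected component of $\graph(W)\cap Q((t;y_0),\delta)$ containing $(t;y_0)$. Since $W$ is continuous, this component is the graph of $W$ over the maximal interval $[a,b]\ni t$ on which $|W(u)-y_0|\le\delta$; this holds up to truncation by $|u-t|\le\delta$, which only shrinks $[a,b]$. On $[a,b]$ the oscillation of $W$ is at most $2\delta$. If $b-t\ge h$ then $[t,t+h]\subset[a,b]$, so $h^{1/2+\eta}\le 2\delta$, i.e.\ $h\le (2\delta)^{2/(1+2\eta)}=o(\delta)$, and similarly for $t-a$. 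Hence after rescaling by $1/\delta$ the central component lies in the strip $\{(x;y):|x|\le C\delta^{(1-2\eta)/(1+2\eta)}\}\cap Q^2$, whose width tends to $0$. Any Hausdorff limit $F$ along $\delta_n\searrow0$ is therefore contained in $\{0\}\times[-1,1]=S_0$, which proves the first claim simultaneously for all $t>0$.

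The consequences follow directly. If $t$ were a central graph-like point, some $CENT(\graph(g)\cap Q^2)$ with $g\in C[-1,1]_0$ would lie in $S_0$. But this set contains a piece of the graph of a continuous $g$ through the origin, and that piece has positive horizontal extent: near $0$ we have $|g|<1$, so the component contains points $(x;g(x))$ with small $x\ne0$. This contradicts $F\subset S_0$, so $CGLMT(W)=\emptyset$. Finally, $UMT(W)\subset GLMT_{g_0}(W)\subset CGLMT(W)$, since a universal point is graph-like for every $g$, in particular for $g_0$, and graph-like points are central graph-like. Hence $UMT(W)=\emptyset$.

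The main obstacle is making the lower oscillation bound hold uniformly over all intervals and all $t$ at once, rather than for a fixed $t$, because the statement is claimed for every $t>0$ off a single null set. I would handle this with the dyadic Borel--Cantelli argument above together with a countable union over $T\in\N$. Everything else is deterministic.
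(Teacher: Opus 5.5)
Your proof is correct. The uniform lower bound $\mathrm{osc}_{[s,s+h]}W\ge h^{1/2+\eta}$, for all $s$ and all small $h$, comes from the small-ball estimate and Borel--Cantelli over dyadic intervals. It forces the central component of $F(W,t,\delta)$ into a vertical strip of width $O(\delta^{(1-2\eta)/(1+2\eta)})$ at every $t$ at once, and the conclusions for $CGLMT(W)$ and $UMT(W)$ then follow as you say, via $UMT\subset GLMT_{g_0}=CGLMT_{g_0}\subset CGLMT$.

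The survey gives no proof of its own: it quotes the result from \cite{Bumicro} with only the remark that the path leaves the squares $Q((t;W(t)),\delta)$ very quickly. Your argument is a rigorous quantitative version of that remark, so there is no other route to compare it against.
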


Since $S_{0}$ is a vertical line segment passing through the origin, the above theorem tells that as we zoom in to any point $(t;W(t))$ on the graph of the Brownian motion it is leaving very quickly the squares $Q((t;W(t)),\delta)$.

We recall that with  probability one for the Brownian motion, $W$ the occupation measure
$\mu (A) =
{\mathcal {L}} (W^{-1}(A))$
is absolutely  continuous  with respect to the Lebesgue measure,
it satisfies the local time
(LT) condition, see for example \cite[Chapter 3]{MorPer}.

In general, occupation measures are often studied in the context of stochastic processes.
Bertoin \cite{Berom,Berhdlev} analyzed the occupation measures and Hausdorff dimensions of level sets for certain self-affine functions. Depending on parameter values, these functions either satisfy the (LT) condition or have a singular occupation measure. About occupation measures see also \cite{GemHor}.

\subsection{Weierstrass's nowhere 
differentiable function.}\label{subsecweierstrass} 
This subsection is mainly based on \cite{Buirra}
and \cite{buczoccupation2010}.
The classical Weierstrass type Cellerier nowhere differentiable function,
is defined by the Fourier series
\begin{equation}\label{**I5}
 {{\mathcal W}}(x) {{\buildrel {\rm def} \over =}} f(x)=\sum_{n=0}^{{\infty}}2^{-n}\sin(2\pi 2^{n}x).
\end{equation}

We also introduce a class
of ``perturbed" Weierstrass--Cellerier-type functions.
Denote by
$ {{\mathcal F}}_{{{\mathcal W}}}$ the set of those
twice continuously differentiable
functions $f_{-1}$ on $[0,1]$
for which for any trigonometric polynomial $P$
the function $f_{-1}+P$ is piecewise strictly monotone
or constant. If one takes a function which is
analytic
on an open set $G\supset [0,1]$ then it is in $ {{\mathcal F}}_{{{\mathcal W}}}$.
Suppose that $f_{-1}(x)$ belongs to $ {{\mathcal F}}_{{{\mathcal W}}}$.
For $x\in [0,1]$ put
\begin{equation}\label{*I5}
f(x)=f_{-1}(x)+\sum_{n=0}^{{\infty}}2^{-n}\sin(2\pi 2^{n}x).
\end{equation}

\begin{figure}[ht] \includegraphics[width=1\linewidth]{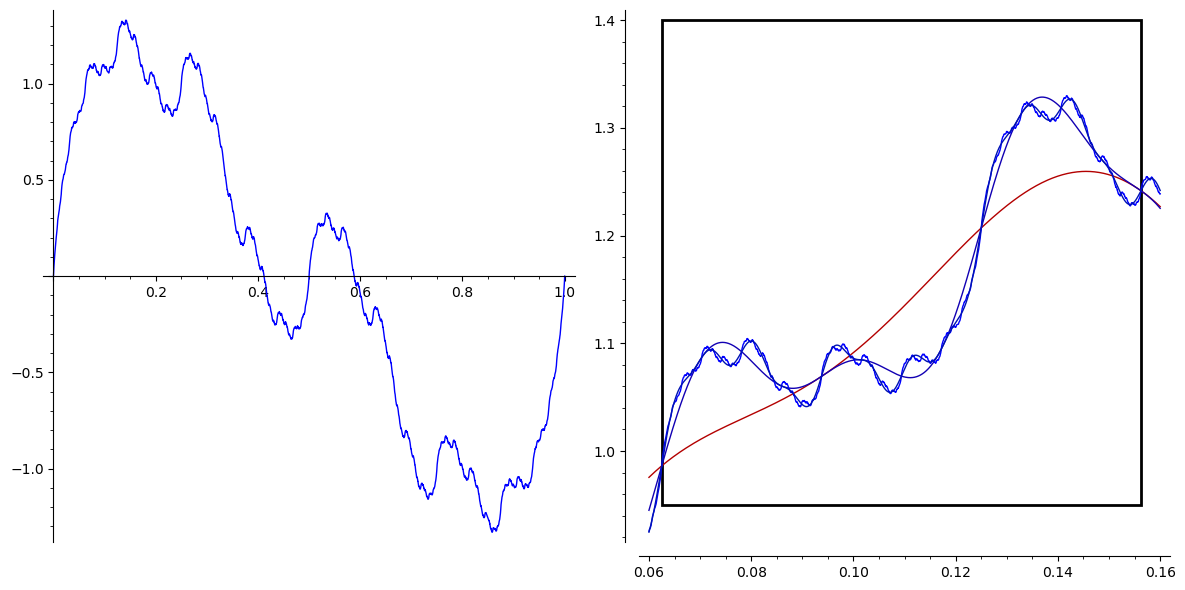} 
\caption{Left: The Weierstrass--Cellerier function, Right: Part of it with approximating ${\mathcal W}_{N}$  for $N=3,4,5,6$}\label{figrect}
\end{figure}

The partial sums in \eqref{*I5} are denoted by
$  \displaystyle  f_N(x)=f_{-1}(x)+\sum_{n=0}^{N}2^{-n}\sin(2\pi 2^{n}x).$

If $f_{-1}$ equals identically zero then we get back the function in \eqref{**I5}. In this special case
 we denote $f_{N}$
by $ {{\cal W}}_{N}$.

Set
$$ {X_ {1}}= \{x\in [0,1]:\liminf_{N\to  {\infty}} f_{N}'(x)=- {\infty},
\limsup_{N\to {\infty}}f_{N}'(x)=+ {\infty}   \}=
$$ $$ \{x\in [0,1]:\liminf_{N\to  {\infty}}  {{\cal W}}_{N}'(x)=- {\infty},
\limsup_{N\to {\infty}} {{\cal W}}_{N}'(x)=+ {\infty}   \}.$$

The results in \cite{Bumicro}  obtained for
$ {{\cal W}}$ imply that
$\liminf_{N\to  {\infty}} f_{N}'(x)=
\liminf_{N\to  {\infty}} (f_{N}-f_{-1})'(x)=- {\infty}$
and
$\limsup_{N\to  {\infty}} f_{N}'(x)=
\limsup_{N\to {\infty}}(f_{N}-f_{-1})'(x)=+ {\infty}$  for almost every $x$.
For these $x$s infinitely often $f'_{N}(x)$ and
$f'_{N+1}(x)$ are not of the same sign.
Since $$|f'_{N+1}(x)-f'_{N}(x)|=
|(2^{-N}\sin(2\pi 2^{N}x))'|=|2\pi \cos(2\pi 2^{N}x)|\leq
2\pi$$ for every $x\in  {X_ {1}}$ there exist infinitely many
$N$s such that $|f_{N}'(x)|\leq 2\pi$. There is also a reasonable bound for the second derivative $| {{\cal W}}''_{N}(x)|< 8\pi^{2}2^{N}.$ Using these properties one can infer a theorem analogous to the one for Takagi's function in 
Theorem \ref{thtakirrdec}.

\begin{theorem}[Theorem 13 of \cite{Buirra}]\label{irregthm}
The set $S_{irr}=  \{(x;f(x)): x\in  {X_ {1}}  \}$
is an irregular $1$-set, the set $S_{reg}=
 \{(x;f(x)):x\in[0,1] {\setminus}  {X_ {1}}   \}$ can be covered by the
union of the graphs of countably many strictly monotone
functions and by a set of zero $ {{\cal H} ^ {1}}$-measure.
\end{theorem}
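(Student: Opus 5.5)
The proof follows the Takagi decomposition of Theorem \ref{thtakirrdec}, with the linear pieces of $G_N$ replaced by local linearisation of the smooth partial sums $f_N$. There are two halves. For the irregular part we show $\mathcal{L}(\Pi_y(S_{irr}))=0$ and that $S_{irr}$ meets every $C^1$ curve in an $\mathcal{H}^1$-null set. For the regular part we construct countably many strictly monotone functions whose graphs cover $S_{reg}$ up to an $\mathcal{H}^1$-null set. Finiteness and positivity of $\mathcal{H}^1(S_{irr})$ follow from the other statements. Since $\mathcal{L}(X_1)=1$ by the results of \cite{Bumicro}, we have $\mathcal{H}^1(S_{irr})\ge \mathcal{L}(\Pi_x(S_{irr}))=1$. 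The upper bound $\mathcal{H}^1(S_{irr})<\infty$ comes from the covers constructed below.

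\emph{Irregular part.} Fix $x\in X_1$. There are infinitely many $N$ with $|f_N'(x)|\le 2\pi$. On an interval $I$ of length $\asymp 2^{-N}$ around $x$, the bound $|f_N''|\le C2^N$ keeps $f_N$ within vertical oscillation $\asymp 2^{-N}$. The tail $f-f_N=\sum_{n>N}2^{-n}\sin(2\pi 2^n\cdot)$ is bounded by $2^{-N}$. Hence $\graph(f|_I)$ lies in a box of width and height $\asymp 2^{-N}$, as with the squares $R(x_0,\kappa)$.

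To get measure zero in $y$, we use the sign changes of $f_N'$ at larger $N$. In the box the tail is a rescaled copy of $\mathcal{W}$ plus a nearly constant-slope term. As in Lemma \ref{prepiy2}, the points of $X_1$ at which $f_{N'}'(x)$ is small for infinitely many $N'$ can be covered by boxes whose $y$-projections have total length an arbitrarily small fraction of the total $x$-length. A Vitali-type selection then gives $\mathcal{L}(\Pi_y(S_{irr}))=0$. Next let $\gamma$ be a $C^1$ curve. Where $\gamma$ is locally a graph over the $y$-axis, $\mathcal{H}^1(\gamma\cap S_{irr})=0$ because $\Pi_y$ of that part is null and $\gamma$ is Lipschitz over $y$. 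Where $\gamma$ is the graph $y=h(x)$ of a $C^1$ function $h$, apply the same reasoning to $f-h$ and compare $f_N'$ with $h'$. Along $X_1$ the derivatives $f_N'$ oscillate between $\pm\infty$, so $f-h$ has the same $X_1$-structure. The argument above then gives $\mathcal{L}(\Pi_y)$ of $\graph(f-h)$ over $X_1$ equal to zero. The coincidence set $\{x\in X_1: f(x)=h(x)\}$ is the zero level set of $f-h$ on $X_1$, and this forces its Lebesgue measure to vanish. This is essentially the way $\mathcal{L}(\Pi_x(S_{irr}\cap\gamma))=0$ was obtained for Takagi.

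\emph{Regular part.} For $x\notin X_1$, either $\liminf f_N'(x)>-\infty$ or $\limsup f_N'(x)<+\infty$. Take the first case; the second is symmetric. For $k\in\mathbb N$ put $A_k=\{x:f_N'(x)\ge -k \text{ for all } N\ge k\}$. We show that $f+2kx$ is increasing on $A_k$, up to splitting $A_k$ into countably many pieces of small diameter. Indeed, for $x<x'$ in $A_k$ with $|x-x'|\asymp 2^{-N}$, write $f=f_N+(f-f_N)$. Then $f_N(x')-f_N(x)\ge -k|x'-x|-O(2^N|x'-x|^2)$, using $|f_N''|\lesssim 2^N$, and the tail is controlled by $|f-f_N|\lesssim 2^{-N}$. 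This step needs care, since the tail bound is only of the same order as $|x-x'|$. We handle it by taking $N$ larger, using the condition $f_M'\ge-k$ for all $M\ge N$ to telescope the tail differences. This gives a one-sided Lipschitz bound $f(x')-f(x)\ge -Ck(x'-x)$ on $A_k$. So $f|_{A_k}+Ckx$ is monotone; extend it to a strictly monotone function. The graph of $f|_{A_k}$ is then covered by the graph of a monotone function minus a linear one, which is a rectifiable set. The $\mathcal{H}^1$-null exceptional set absorbs the remaining points, for instance where $f_N'$ fails to stay on one side.

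\emph{Main obstacle.} The hardest step is the quantitative covering behind $\mathcal{L}(\Pi_y(S_{irr}))=0$. Takagi's exact self-similarity is unavailable here, so we must show that for a.e.\ $x\in X_1$ the zoomed graph is close to a flat segment, i.e.\ micro-tangent to $g_0$, at infinitely many scales. We would attack this first via the random-walk/CLT behaviour of $\mathcal{W}_N'$ from \cite{Bumicro}, combined with the second-derivative bound.
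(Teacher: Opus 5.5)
Your proposal has a genuine gap at its central step, the claim $\mathcal{L}(\Pi_y(S_{irr}))=0$.

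This claim is not part of the statement (contrast Theorem \ref{thtakirrdec}), and for good reason. Since $\Pi_y(S_{irr})=f(X_1)$ and $\mathcal{L}(X_1)=1$, it would make the occupation measure of $f$ singular. Running your argument for $f-h$ with $h(x)=-cx$ would give singularity for every $c$, i.e.\ Theorem \ref{thmmainwoc}. The paper presents that as a later, harder result; from the irregularity of $S_{irr}$ and Besicovitch's theorem it only derives singularity for almost every $c$.

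The mechanism you invoke also cannot work. Lemma \ref{prepiy2} needs windows in which the graph is nearly flat. But write $f=f_N+2^{-N-1}\mathcal{W}(2^{N+1}\,\cdot)$ with $|f_N''|\lesssim 2^N$, and rescale an interval $I$ with $|I|\asymp 2^{-N}$ to unit length. Compactness of functions with bounded second derivative, together with the nowhere differentiability of $\mathcal{W}$, then gives $\operatorname{osc}_I f\ge c|I|$ for every short interval $I$. Hence $g_0$ is never a micro tangent of $f$, and every box around a graph point has height comparable to its width. The flatness you name as the ``main obstacle'' therefore never occurs. Small total $y$-length could only come from massive overlap of $y$-projections; for Takagi this was forced by exact symmetry, which is unavailable here.

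Independently, a null $y$-projection of the graph of $f-h$ over $X_1$ would not show that $\{x\in X_1:f(x)=h(x)\}$ is Lebesgue null. A single level set always has null $y$-projection.

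What is needed is weaker. Your box estimate ($|f_N'(x)|\le 2\pi$ for infinitely many $N$, $|f_N''|\lesssim 2^N$, $|f-f_N|\le 2^{-N}$), combined with a Vitali covering, gives
\[
\mathcal{H}^1(\{(x;f(x)):x\in A\cap X_1\})\le C\,\mathcal{L}^*(A)\quad\text{for all } A.
\]
So it suffices to prove $\mathcal{L}(\Pi_x(\gamma\cap S_{irr}))=0$ for every $C^1$ curve $\gamma$.
\begin{itemize}
\item Points where $\gamma$ has a vertical tangent project onto a null set.
\item Elsewhere $\gamma$ is locally $y=h(x)$ with $h\in C^1$, and $\{x\in X_1:f=h\}$ is null by a density-point argument. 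At $x_0\in X_1$, $f_N'(x_0)$ sweeps from $-\infty$ to $+\infty$ in steps of at most $2\pi$, so $|f_N'(x_0)-h'(x_0)|\le 2\pi$ for infinitely many $N$. Rescaling by $2^N$ and passing to a limit would produce a translate of $\mathcal{W}$ coinciding with a $C^1$ function on an interval, a contradiction.
\end{itemize}

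Your regular part also proves less than stated. On $A_k$ you only get graphs of increasing-minus-linear functions. Such graphs (e.g.\ a horizontal segment) cannot in general be covered by countably many strictly monotone graphs plus an $\mathcal{H}^1$-null set. Split $[0,1]\setminus X_1$ instead into two kinds of points:
\begin{itemize}
\item Points with $f_N'\to\pm\infty$. There your estimate, applied with $f_N'\ge K$ for large $K$, makes $f$ itself strictly monotone on pieces of small diameter. The tail only contributes $O(|x'-x|)$, so no telescoping is needed.
\item Points with $|f_N'(x)|\le K$ for infinitely many $N$. These lie in the Lebesgue-null set $[0,1]\setminus X_1$, so by the box estimate their graph points are $\mathcal{H}^1$-null. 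This is exactly the exceptional set in the statement.
\end{itemize}
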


If one wishes the set of zero measure can be added to
$S_{irr}$ and this way we can obtain a modified $S_{irr}^{*}$
and $S_{reg}^{*}$. Then the set $S_{reg}^{*}$ can be
covered by the
union of the graphs of countably many strictly monotone
functions.

Next one can see the way occupation measures are used to obtain a result analogous to the one about the finiteness of almost every level set of the Takagi function.

\begin{theorem}[Theorem 19 of \cite{Buirra}]\label{flevthm}
If $f_{-1}\in  {{\cal F}}_{{{\cal W}}}$
and the occupation measure of $f$ defined
in  \eqref{*I5} is singular then almost every
level set of $f(x)$ is finite.
\end{theorem}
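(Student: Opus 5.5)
The plan is to combine the decomposition of Theorem \ref{irregthm} with the singularity of the occupation measure. This mirrors the level-set argument given for the Takagi function in Theorem \ref{thtakirrdec}.

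\emph{Step 1: the irregular part has null $y$-projection.} Let $\mu$ be the occupation measure of $f$, as in \eqref{defocc}. By singularity there is a Borel set $Y_0$ with $\mathcal L(Y_0)=0$ and $\mu(\mathbb R\setminus Y_0)=0$, i.e.\ $\mathcal L(f^{-1}(Y_0))=1$. The set $X_1$ has full measure, so $X_1\cap f^{-1}(Y_0)$ also has full measure.

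First I would show that one can shrink to a set $X_1'\subset X_1$ of full measure such that $\mathcal L(f(X_1'))=0$; taking $X_1'=X_1\cap f^{-1}(Y_0)$ works. Then I would replace $S_{irr}$ by $\{(x;f(x)):x\in X_1'\}$. The complement $\{(x;f(x)):x\in X_1\setminus X_1'\}$ has $x$-projection of measure zero.

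Here is the expected obstacle. Points of this complementary set over a null $x$-set are not automatically $\mathcal H^1$-small on a graph of a nowhere differentiable function, and they need not lie on the monotone pieces. I would handle it by not enlarging $S_{reg}$ at all. Instead I only use that the values $y\notin Y_0$ have $f^{-1}(y)\cap X_1'=\emptyset$, so $f^{-1}(y)\subset([0,1]\setminus X_1)\cup(X_1\setminus X_1')$.

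\emph{Step 2: covering the rest by monotone pieces.} By Theorem \ref{irregthm}, $S_{reg}$ is covered by graphs of countably many strictly monotone functions $h_j$ and a set $Z$ with $\mathcal H^1(Z)=0$. Since $\mathcal H^1(Z)=0$, the projection satisfies $\mathcal L(\Pi_y(Z))=0$.

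The key quantitative point, taken from the construction in Theorem \ref{irregthm}, is that the pieces $h_j$ arise from intervals where $|f_N'|\le 2\pi$ and $|f_N''|<8\pi^2 2^N$. On such an interval $f$ is strictly monotone along the construction, so each $h_j$ is defined on a subinterval $I_j$ and each level $y$ meets $\graph(h_j)$ at most once.

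I also need that for a.e.\ $y$ only finitely many $j$ satisfy $y\in h_j(I_j)$. I would get this by a Borel--Cantelli argument: bound $\sum_j\mathcal L(h_j(I_j)\setminus Y_0)<\infty$ at each scale $N$, using the derivative bounds to show that the images of the level-$N$ pieces have total length $O(2^{-\epsilon N})$ off the singular support. This is the step I would work hardest on. If the summability fails, I would instead organize the pieces in the nested way used for $\mathbf g_\kappa$ for the Takagi function, so that $f^{-1}(y)$ stabilizes at a finite level $\kappa(y)$. Here $f_{-1}\in\mathcal F_{\mathcal W}$ guarantees that each partial sum $f_N$ is piecewise strictly monotone, so each $f_N^{-1}(y)$ is finite.

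\emph{Step 3: conclusion.} Remove from the range the null set
\[Y_0\cup\Pi_y(Z)\cup f(X_1\setminus X_1')\cup\{\text{values in infinitely many }h_j(I_j)\}.\]
The third set is handled by a measure-zero argument via the same singular-support reasoning: $f(X_1\setminus X_1')\subset\mathbb R\setminus Y_0$ only matters up to the $\mu$-null set. For every remaining $y$, the level set $f^{-1}(y)$ is contained in finitely many graphs $\graph(h_j)$, each meeting the level line at most once. Hence $f^{-1}(y)$ is finite for almost every $y$.
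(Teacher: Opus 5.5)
There is a genuine gap, and it sits exactly at the ``expected obstacle'' you flagged. Step~1 only gives $\mathcal{L}(f(X_1'))=0$ for some full-measure $X_1'\subset X_1$, which merely restates singularity. Step~3 then discards $f(X_1\setminus X_1')$ because it lies in the $\mu$-null set $\mathbb{R}\setminus Y_0$. But you may only discard Lebesgue-null sets of values, and $\mu\perp\mathcal{L}$ means precisely that $\mathbb{R}\setminus Y_0$ has full Lebesgue measure.

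This is the heart of the theorem, not a technicality. By Taylor's formula with $|f_M''|\le\|f_{-1}''\|_\infty+8\pi^2 2^M$, plus the tail bound $2\cdot 2^{-M}$, one gets $|f(x_0+h)-f(x_0)-f_M'(x_0)h|\le C_1 2^{-M}$ for $|h|\le 2^{-M}$. Here $C_1$ depends only on $\|f_{-1}''\|_\infty$. At $x_0\in X_1$, $f_M'(x_0)$ exceeds $2C_1$ at some scales and is below $-2C_1$ at others. So $f-f(x_0)$ changes sign infinitely often on each side of $x_0$, and every point of $X_1$ is an accumulation point of its own level set. That is, $f(X_1)\subset\{y:\#f^{-1}(y)=\infty\}$, so you must prove $\mathcal{L}(f(X_1))=0$ for all of $X_1$. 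Nothing in your argument does this.

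Step~2 cannot supply it either. Small $|f_N'|$ is the flat regime characteristic of $X_1$, not of the monotone pieces; $f$, being nowhere differentiable, is monotone on no interval. Moreover, ``off the singular support'' is vacuous: $\mathcal{L}(Y_0)=0$ gives $\mathcal{L}(h_j(I_j)\setminus Y_0)=\mathcal{L}(h_j(I_j))$, so the claimed $O(2^{-\epsilon N})$ summability has no source.

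The missing idea is to use singularity through differentiation of $\mu$ rather than through a carrier set. Let $x_0$ be any accumulation point of $f^{-1}(y)$, with $f(x_n)=y$ and $x_n\to x_0$. Choosing $M$ with $2^{-M-1}<|x_n-x_0|\le 2^{-M}$ in the estimate above gives $|f_M'(x_0)|\le 2C_1$ for infinitely many $M$. For each such $M$, the set $[x_0-2^{-M},x_0+2^{-M}]\cap[0,1]$, of length at least $2^{-M}$, is mapped into $[y-3C_12^{-M},y+3C_12^{-M}]$. Hence $\mu([y-r,y+r])\ge r/(3C_1)$ along a sequence $r\to 0$.

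A measure singular to $\mathcal{L}$ satisfies $\mu([y-r,y+r])/(2r)\to 0$ for $\mathcal{L}$-a.e.\ $y$. So the set of $y$ whose level set is infinite (and hence, by compactness, has an accumulation point) is Lebesgue-null. This density argument replaces the bound $\mathcal{L}(\bigcup_{x_0}J(x_0,\kappa))\le 2^{-\kappa}$ from the Takagi proof: lying in arbitrarily short intervals $J$ with $\mu(J)\ge c\,\mathcal{L}(J)$ is a null event when $\mu$ is singular. It also makes the counting of monotone pieces in Step~2 unnecessary.
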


The projection of the irregular $1$-set $S_{irr}$ onto the $x$-axis is again a set of
full measure. By the Besicovitch projection theorem in almost
every direction $S_{irr}$ projects into a set of Lebesgue
measure zero. This implies that
 $ {{\mathcal W}}(x,c) {{\buildrel {\rm def} \over =}} {{\mathcal W}}(x)+cx$ has purely singular
occupation measure for almost every $c$.
By Theorem \ref{flevthm}    for these $c$
the level sets, $L_{y}$ of $ {{\mathcal W}}(x,c)$ are finite for almost every
$y$. Of course, out of the almost every values the case $c=0$, the case of the Weierstrass--Cellerier function is the most interesting.

There are many results where it can be proven
that a certain property holds for almost every
parameter value, but it is much more difficult to see that a certain
parameter value satisfies this property. My favorite example is the following.
It is well-known that  $\{\theta^n\}$ (the fractional part of $\theta^n$) is uniformly distributed $\mod 1$
for almost every $\theta > 1$ see for example \cite[Theorem 4.5.2]{Bertin}. But there is the famous unsolved problem about density and uniform distribution of $\{(3/2)^n\}$  $\mod 1$.

Fortunately, in the case of the functions ${{\mathcal W}}(x,c)$  it has turned out that there are no exceptional $c$s.

\begin{theorem}[Theorem 1 of \cite{buczoccupation2010}]\label{thmmainwoc}
For all $c\in { \ensuremath { \mathbb R } }$ the occupation measure of the function $ { { \cal W } }(x,c)$ is purely singular.
\end{theorem}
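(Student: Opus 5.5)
The plan is to reduce the claim for every $c$ to two known facts: the irregular-set argument already gives the result for almost every slope, and there are no exceptional slopes. The singularity of the occupation measure of ${\mathcal W}(x,c)$ means that there is a set $Y_c$ with ${\mathcal L}(Y_c)=0$ and ${\mathcal L}(\{x\in[0,1]:{\mathcal W}(x,c)\in Y_c\})=1$. The natural candidate is $Y_c:=\Pi'_c(S_{irr})$, the image of the irregular $1$-set $S_{irr}=\{(x;{\mathcal W}(x)):x\in X_1\}$ of Theorem~\ref{irregthm} under the map $(x;y)\mapsto y+cx$. Since ${\mathcal L}(X_1)=1$, the second condition is automatic. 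So the whole task is to show ${\mathcal L}(\{{\mathcal W}(x)+cx: x\in X_1\})=0$ for \emph{every} $c$, not just almost every $c$ as Besicovitch's projection theorem gives.

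First I would exploit the dyadic self-affinity. We have ${\mathcal W}(x)={\mathcal W}_N(x)+2^{-N-1}{\mathcal W}(2^{N+1}x)$, so on a dyadic interval $I$ of length $2^{-N-1}$ the graph of ${\mathcal W}(x)+cx$ equals ${\mathcal W}_N(x)+cx$ plus a scaled copy of ${\mathcal W}(\cdot)+c'\cdot$ with the same slope (the linear term $cx$ rescales into $c\cdot$, since scaling is isotropic). Then ${\mathcal W}(\cdot,c)$ restricted to $I$ is a $2^{-N-1}$-similar copy of ${\mathcal W}(\cdot,c)$ plus the smooth function ${\mathcal W}_N(x)+cx$. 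That smooth function has second derivative at most $8\pi^2 2^N$, so it is almost affine on $I$ with slope $s={\mathcal W}_N'(x)+c$.

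Next I would use the points of $X_1$. For $x\in X_1$ the derivatives ${\mathcal W}_N'(x)$ oscillate between $\pm\infty$ with bounded increments $2\pi$. Hence for infinitely many $N$ we have $|{\mathcal W}_N'(x)+c|\le 2\pi$, and in fact ${\mathcal W}_N'(x)+c$ lands in any prescribed bounded window, in particular near $-c$ modulo that window. At such scales the image of $I$ under ${\mathcal W}(\cdot,c)$ is, up to a small affine distortion, a copy of the image of $[0,1]$ under ${\mathcal W}(\cdot,c+s)$ with $|c+s|$ in a compact set $K$. I would then prove a uniform quantitative statement: there exist $\eta<1$ and $m$ such that for every $c'\in K$ and every dyadic interval, a proportion at least $1-\eta$ of the $y$-extent is not hit by the sub-pieces that are ``still in $X_1$-type'' at depth $m$. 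The intended route is to use the almost-every-$c$ result together with compactness of $K$ and continuity in $c'$ of the finite-level approximations ${\mathcal W}_m+c'x$: for a.e.\ $c'$ the projection has measure zero, so at some finite depth the covering intervals have total length below $1/2$. The set of $c'$ where this holds at depth $m$ is open and nearly full, and the bounded window of slopes lets me choose which $N$ to use. Iterating this along the infinitely many good scales gives a nested cover of $Y_c$ with total length decaying like $\eta^k$, so ${\mathcal L}(Y_c)=0$.

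The main obstacle is the uniformity step. Besicovitch only yields measure zero for a.e.\ slope, and the exceptional null set of slopes could, a priori, be exactly the slopes the renormalisation visits. I would try to circumvent this using the randomness of ${\mathcal W}_N'(x)$ along $x\in X_1$, since it behaves like a random walk. Because the effective slope $c+{\mathcal W}_N'(x)$ takes a.e.\ many values over a positive proportion of $x$, the ``good'' slopes are visited at a positive density of scales for a.e.\ $x$. A Borel--Cantelli/Fubini argument over $x$ then replaces the need for control at every single slope.
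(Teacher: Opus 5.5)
\textbf{Where the proposal stands.} Your reduction is sound, and it is all the survey itself records. Since ${\mathcal L}(X_1)=1$, singularity follows once $\{{\mathcal W}(x)+cx : x\in X'\}$ is Lebesgue null for some full-measure $X'\subset X_1$. For almost every $c$ this is Theorem~\ref{irregthm} plus Besicovitch's projection theorem. The passage to \emph{every} $c$ is the content of \cite{buczoccupation2010} and is not reproduced in the survey. In your proposal this passage is only announced, not carried out, so there is a genuine gap.

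\textbf{The renormalization is wrong.} On a dyadic interval $I$ of length $2^{-N-1}$ the smooth part is not almost affine at the scale of the embedded copy.
\begin{itemize}
\item The bound $|{\mathcal W}_N''|\le 8\pi^2 2^N$ only limits its deviation from an affine function to a constant times $2^{-N}$. That is the same order as the copy $2^{-N-1}{\mathcal W}(2^{N+1}x)$.
\item This is the true order in general. The top term $2^{-N}\sin(2\pi 2^N x)$ alone is a half sine arch of height $2^{-N}$ on $I$, and it changes the slope by $4\pi$ across $I$.
\item After rescaling, the piece over $I$ is $u\mapsto {\mathcal W}(u)+\psi_I(u)$ with $\|\psi_I''\|_\infty\le 4\pi^2$. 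Here $\psi_I$ is genuinely curved; the piece is not ${\mathcal W}(u)+c'u$.
\item You also count $cx$ twice. If ${\mathcal W}_N+cx$ is the smooth part, the copy is of ${\mathcal W}$, not of ${\mathcal W}(\cdot,c)$.
\end{itemize}
So the renormalized problems are projections of the graph of ${\mathcal W}$ along curved foliations $y+\psi(x)=\mathrm{const}$. The almost-every-slope statement says nothing about these directly.

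\textbf{The uniformity step is missing.} The idea of renormalizing at flat scales and using the random-walk behaviour of the effective slope to dodge exceptional directions is reasonable. However, the ingredients you name do not work as stated.
\begin{itemize}
\item \emph{Compactness.} The parameter sets that are good at a fixed depth are open and exhaust a set of full measure. They need not cover your compact window, so compactness gives no uniform $(\eta,m)$.
\item \emph{Fubini over $x$.} The piece over $I$ depends only on $I$, not on the point $x\in I$; varying ${\mathcal W}_N'(x)$ inside $I$ does not change it. What must be controlled is therefore a countable family of specific parameters, which could all lie in the exceptional null set. To extract control from randomness you would need openness of the finite-depth condition together with a limit theorem, with absolutely continuous limit, for the law of the renormalized data $(\psi_I,\text{slope})$. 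Essentially this is a local limit theorem for ${\mathcal W}_N'(x)+c$ conditional on the local configuration. None of this is addressed.
\item \emph{Density of good scales.} Your claim that good slopes occur at a positive density of scales is false. By the central limit theorem for lacunary sums, ${\mathcal W}_N'(x)$ fluctuates on the scale $\sqrt N$. Hence the expected proportion of $N\le M$ with ${\mathcal W}_N'(x)+c$ in a fixed bounded window tends to $0$, and by Fatou the lower density of such $N$ is $0$ for a.e.\ $x$.
\end{itemize}

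\textbf{The covering scheme does not compound.}
\begin{itemize}
\item The sub-pieces ``still in $X_1$-type'' at depth $m$ are undefined. Every depth-$m$ dyadic interval meets the full-measure set $X_1$, and their images cover the whole range.
\item The $\eta^k$ decay does not follow from per-piece estimates. A bound $\eta\,|f(I)|$ on the \emph{union} of the images of the sub-pieces of $I$ says nothing about the \emph{sum} of their lengths. That sum is what your next stage multiplies by $\eta$, so the factors never multiply. In the Takagi proof the compounding comes from exact symmetries that make images coincide.
\item A scheme that can work is different. Inside each flat piece, cover a fixed fraction of its $x$-measure by images of total measure at most $\varepsilon$ per unit of $x$-measure, using that at flat scales $|f(I)|$ is comparable to $|I|$. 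Recurse on the uncovered part, then let $\varepsilon\to 0$. This requires the local estimate with $\varepsilon$ arbitrarily small, not a fixed $\eta$.
\end{itemize}
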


 By Theorem \ref{flevthm}  this also implies that for any $c$ almost every level set  is finite for   $ { { \cal W } }(x,c)$, including the case $c=0$, the classical Weierstrass--Cellerier function.

\begin{figure}[ht!]
\centering
\includegraphics[height=0.49\linewidth]{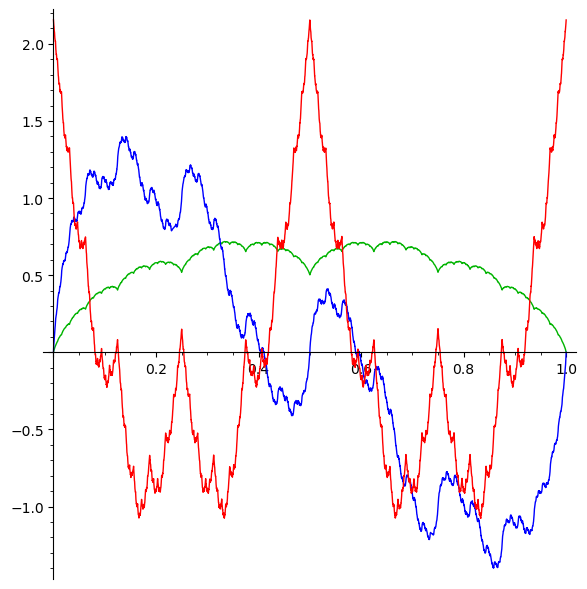}
\includegraphics[width=0.49\linewidth]{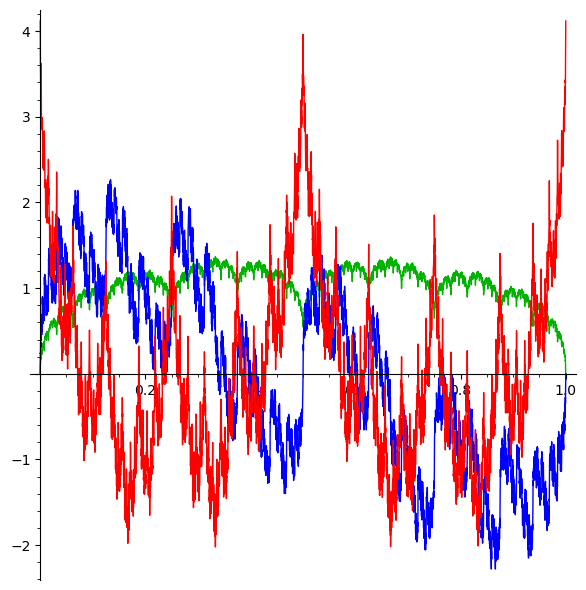} 
\captionsetup{width=0.9\textwidth}
\caption{Three Weierstrass functions, on the left with $\aaa=0.9$ on the right with $\aaa=0.4$}
\label{fig-alphalarge0904}
\end{figure}

\subsection{Level sets and occupation measures of prevalent Weierstrass functions.} 
\label{subwlev}
This section is about the recent paper \cite{buczolich2025levelsets}.
In this paper a much larger class of Weierstrass-type functions was studied and instead of the topological notion of typicality the measure theory based prevalence was considered.
We studied level sets and occupation measures of prevalent Weierstrass functions.

Prevalence was introduced in \cite{HuSaYoprev}, see also \cite{OtYoprev}. For the sake of this survey paper we introduce only the concept of $d$-prevalence. Every $d$-prevalent set is also prevalent in the general case and often verifying the special case of $d$-prevalence is the way of showing that a property is prevalent. Suppose that $X$ is a Banach space a subset $A\sse X$ is $d$-prevalent if one can find a $d$-dimensional subspace $S \subset X$, a suitable \emph{probe space}, such that for any $x \in X$, we have $x + s \in B$ for Lebesgue-almost every $s \in S$, where the Lebesgue measure
$ {\mathcal {L}}^{d}$ is defined on $S$ and $B$ is a Borel set contained in $A$.

For an integer $b \ge 2$, $0<\alpha\leq 1$, and a non-constant Lipschitz function $g$ periodic by $1$ the \emph{Weierstrass function} $W_g^{\alpha,b} \colon \mathbb{T} \to  {\mathbb {R}}$ is defined by
\begin{equation} \label{*Wdef}
  W_g^{\alpha,b}(x) = \sum_{k=0}^{\infty} b^{-\alpha k} g(b^kx).
\end{equation}
For fixed $b \ge 2$ and $0 < \alpha \leq 1$, the family of these Weierstrass functions is denoted by $ {\mathcal {W}}^{\alpha,b}$.

Recall that  $f\colon [0,1]\to  {\mathbb {R}}$ is \emph{$\alpha$-H\"older} if  for all $x,y\in [0,1]$ there exists a constant $C>0$ such that
\begin{equation} \label{eqholder} 
  |f(x)-f(y)|\leq C|x-y|^\alpha.
\end{equation}

 The Weierstrass functions in $ {\mathcal {W}}^{\alpha,b}$ are known to be $\alpha$-H\"older (see, e.g., \cite[Proposition~2.3]{10.1007/978-3-319-18660-3_5}). 
The functions in $\mathcal{W}^{\alpha,b}$ are in one-to-one correspondence with the Banach space $\mathrm{Lip}(\mathbb{T})$ of Lipschitz functions defined on the unit circle $\mathbb{T}$. In this Banach space we use the norm
$\ds   \|g\|_{\mathrm{Lip}} = \sup_{x \in \mathbb{T}}|g(x)| + \sup_{(x,y) \in \mathbb{T}\times \mathbb{T},\ x\not=y}  \frac{|g(x)-g(y)|}{|x-y|}.
$
Thus, we identify $ {\mathcal {W}}^{\alpha,b}$ with $\mathrm{Lip}(\mathbb{T})$, and prevalence in $ {\mathcal {W}}^{\alpha,b}$ refers to prevalence under this identification. The family $ {\mathcal {W}}^{\alpha,b}$ includes well-studied examples such as the classical Weierstass function (see for example \cite{Harwei}), where $g(x) = \cos(2\pi x)$,
the Weierstrass--Cellerier function with $g(x) = \sin(2\pi x)$ and
$\aaa=1$ and $b=2$,
 and the Takagi function, where $g(x) = \dist(x, {\mathbb {Z}})$ also with $\aaa=1$ and $b=2$. On Figure \ref{fig-alphalarge0904} one can see the versions of the previous functions when $\aaa=0.9$ and $\aaa=0.4$ are used in Definition \ref{*Wdef}. The figure uses different scales on the $x$ and $y$ axes, the generalized classical Weierstrass function is in red and it takes non-zero positive value at $0$. The generalized Takagi function is in green and is taking non-negative values. Finally, the third function in blue is the generalized Weierstrass--Cellerier function. The figure illustrates that the smaller $\aaa$ the wilder are the functions.

We say that the one-to-one map $\Phi \colon [0,1] \to  {\mathbb {R}}^d$ is an \emph{$\alpha$-bi-H\"older map} if $\Phi $ and its inverse are both $\aaa$-Hölder. 
In \cite{buczolich2025levelsets} we had to find  an $\alpha$-bi-H\"older embedding $\Phi\colon \mathbb{T} \to  {\mathbb {R}}^d$, whose coordinate functions are in $ {\mathcal {W}}^{\alpha,b}$.

Let $\mathcal{G}=\{g_0,\ldots,g_{d-1}\}$ be a finite collection of Lipschitz functions on $\mathbb{T}$. The \emph{Weierstrass embedding} $\Phi_{\mathcal{G}}^{\alpha,b}\colon \mathbb{T}\to {\mathbb {R}}^d$ associated to the collection $\mathcal{G}$, integer $b \ge 2$, and $0<\alpha<1$ is defined by
\begin{equation} \label{*Web}
  \Phi_{\mathcal{G}}^{\alpha,b}(x)=(W_{g_0}^{\alpha,b}(x),W_{g_1}^{\alpha,b}(x),\ldots,W_{g_{d-1}}^{\alpha,b}(x)).
\end{equation}

One of the key tools of \cite{buczolich2025levelsets} was the following theorem about the existence of  {Weierstrass embeddings}.

\begin{theorem}[Theorem 2.3 of \cite{buczolich2025levelsets}] \label{prop:bi-holder}
  For every integer $b \ge 2$ and $0<\alpha<1$ there exist $d \in  {\mathbb {N}}$ and a finite collection $\mathcal{G}=\{g_0,\ldots,g_{d-1}\}$ of Lipschitz functions on $\mathbb{T}$ such that the Weierstrass embedding $\Phi_{\mathcal{G}}^{\alpha,b}\colon \mathbb{T}\to {\mathbb {R}}^d$ is $\alpha$-bi-H\"older.
\end{theorem}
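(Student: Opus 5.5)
Throughout, write $G=(g_0,\dots,g_{d-1})$ and $\Phi=\Phi_{\mathcal G}^{\alpha,b}$. The plan is to get the upper Hölder bound for free, reduce the lower bound to a unit-scale non-vanishing statement on a compact parameter space, and then choose $\mathcal G$ generically with $d$ large so that this statement holds. The last step is the one I expect to be hardest.

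\emph{Upper bound and reduction to unit scale.} Each coordinate $W_{g_i}^{\alpha,b}$ is $\alpha$-Hölder, so $|\Phi(x)-\Phi(y)|\le C|x-y|^\alpha$ and only the lower bound $|\Phi(x)-\Phi(y)|\ge c|x-y|^\alpha$ needs proof. Fix $x\neq y$ and choose $n$ with $b^{-n-1}<|x-y|\le b^{-n}$. The self-affinity $\Phi(x)=G(x)+b^{-\alpha}\Phi(bx)$, iterated $n$ times, gives
\[
\Phi(x)-\Phi(y)=b^{-\alpha n}\Bigl(\Phi(b^nx)-\Phi(b^ny)+\sum_{j=1}^{n}b^{\alpha j}\bigl(G(b^{n-j}x)-G(b^{n-j}y)\bigr)\Bigr).
\]
Put $u=b^nx$ and $v=b^ny$ (lifted to $\mathbb R$), so that $b^{-1}<|u-v|\le 1$. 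The $j$-th summand has size at most $\|G\|_{\mathrm{Lip}}\,b^{-(1-\alpha)j}$, hence the sum converges uniformly. It depends on $x$ only through the backward orbit of $u$ under $z\mapsto bz \bmod 1$, that is, through a point of the $b$-adic solenoid $\Sigma$ lying over $u$. So it suffices to show that the continuous map
\[
\Theta(\xi,t)=\Phi(u+t)-\Phi(u)+\sum_{j\ge1}b^{\alpha j}\bigl(G(b^{-j}(u+t)+\xi_j)-G(b^{-j}u+\xi_j)\bigr)
\]
never vanishes on the compact set $\Sigma\times\{b^{-1}\le|t|\le1\}$. Here $\xi=(u,(\xi_j)_j)\in\Sigma$ records the branches of the backward orbit, and finite $n$ corresponds to truncated sums, which I will include in the compact set by allowing the tail to be cut off. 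Compactness then gives $|\Theta|\ge c>0$, and the displayed identity yields $|\Phi(x)-\Phi(y)|\ge c\,b^{-\alpha n}\ge c|x-y|^\alpha$. Injectivity of $\Phi$ on $\mathbb T$ is the case of this bound at unit separations.

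\emph{Choice of $\mathcal G$.} I would take the $g_i$ to be random trigonometric polynomials: fix a finite frequency set and let $g_i(x)=\sum_{|m|\le M}a_{i,m}e^{2\pi i m x}$ (real parts), with i.i.d. Gaussian coefficients $a_{i,m}$, independent across the $d$ coordinates. For each fixed parameter $(\xi,t)$, the vector $\Theta(\xi,t)$ is a linear image of the coefficients, and its $d$ coordinates are i.i.d. The key estimate is a uniform small-ball bound
\[
\mathbb P\bigl(|\Theta_i(\xi,t)|<\varepsilon\bigr)\le C\varepsilon \quad\text{for each coordinate } i .
\]
This requires the variance of $\Theta_i(\xi,t)$ to be bounded below uniformly in the parameters. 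That lower bound amounts to saying that the function $x\mapsto(\text{the expression})$ is not identically degenerate when $t\ne0$ mod $1$. Because $|t|\ge b^{-1}$, the $j=0$ term $e^{2\pi i m(u+t)}-e^{2\pi i m u}$ is nonzero for a suitable frequency $m$, and the tail terms have strictly smaller weights relative to it once $M$ and the frequency set are chosen sensibly. By independence of the coordinates, $\mathbb P(|\Theta(\xi,t)|<\varepsilon)\le (C\varepsilon)^d$.

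\emph{Covering argument.} Cover the parameter space by $\varepsilon$-balls in a metric for which $\Theta$ is Lipschitz, with a random Lipschitz constant that has good tails. The parameter $t$ is one-dimensional, the solenoid coordinate $u$ is one-dimensional, and the dependence on $\xi_j$ is damped by $b^{-(1-\alpha)j}$. A standard estimate therefore shows that the parameter space has finite box dimension $D$ in this metric, with $D$ depending only on $b$ and $\alpha$. A union bound then gives
\[
\mathbb P\bigl(\min|\Theta|<\varepsilon\bigr)\lesssim \varepsilon^{-D}(C\varepsilon)^d\to0 \qquad\text{as } \varepsilon\to0, \text{ provided } d>D .
\]
Hence, with positive probability, and in fact almost surely, $\Theta$ does not vanish, which proves the theorem.

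\emph{Main obstacle.} The hardest step is the uniform variance lower bound together with the finite-dimensionality of the effective parameter space. The tail sum must not be allowed to cancel the unit-scale term for some unlucky backward orbit $\xi$. The first thing I would try is to choose the frequencies so that the $j=0$ increment dominates in $L^2$ (for instance, high frequencies at which $b^{-j}$-rescaling lowers the relevant Fourier mass). If that fails, I would fall back on a transversality argument in the spirit of the "$\alpha$ close to $1$" perturbative regime, followed by a block construction over residues of scales mod $m$.
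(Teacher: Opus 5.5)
\paragraph{The gap.} Your overall architecture is sound: reduction by self-affinity to a compact solenoid-type parameter space, a Gaussian small-ball bound for each coordinate, and a union bound over a net with $d>D$. But the step that carries all the content, the uniform lower bound $\sigma(p)\ge\sigma_0>0$ on the variance, is not proved, and the reason you give for it is false.

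Write $\Lambda_p(g)$ for the value of $\Theta$ at $p=(\xi,t)$ computed with a single function $g$, and put $e_m(x)=e^{2\pi i m x}$.

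\emph{The weights do not favour the unit scale.} In your formula the backward terms carry weights $b^{\alpha j}>1$, not smaller ones. The $j$-th backward term can have modulus up to $\min\{2b^{\alpha j},\,2\pi|m|b^{-(1-\alpha)j}\}$, and already for $m=1$ these bounds sum to $2\pi/(b^{1-\alpha}-1)$. The forward terms of $\Phi(u+t)-\Phi(u)$ have moduli up to $2b^{-\alpha k}$, with total $2/(b^{\alpha}-1)\ge 2$ whenever $b^\alpha\le 2$ (e.g.\ always for $b=2$). Meanwhile $|e_m(u+t)-e_m(u)|\le 2$, and this term vanishes for every $m$ at $t=\pm1$, which lies in your parameter set.

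\emph{Choosing frequencies cannot rescue domination.} At every solenoid point the bi-infinite sum is shift invariant along the orbit. This gives $\Lambda_p(\psi(b\,\cdot))=b^{\alpha}\Lambda_p(\psi)$, hence $\Lambda_p(e_{bm})=b^{\alpha}\Lambda_p(e_m)$. Raising the frequency only changes which scale plays the role of the unit scale.

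\emph{Block constructions are unavailable.} Assouad-type blocks over residues of scales cannot be built inside the Weierstrass class, since every coordinate uses the same $g_i$ at every scale with weight $b^{-\alpha k}$.

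\emph{The truncated parameter set needs care.} If you allow arbitrary truncations with $b^{-1}\le|t|\le 1$, then truncating after $j=0$ at $t=\pm1$ gives $\Theta\equiv 0$. At truncation level $N$ you must impose $|t|\le\min\{1,b^N/2\}$, which is what pairs $x,y\in\mathbb T$ actually produce.

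\paragraph{What is needed, and why it holds.} You need: for every point $p$ of the compactified parameter space (truncation level $N\in\{0,1,\dots\}\cup\{\infty\}$), $\Lambda_p$ is not identically zero. Then compactness finishes the variance bound:
\begin{itemize}
\item $p\mapsto\Lambda_p(\psi)$ is continuous for each smooth $\psi$, and $|\Lambda_p(\psi)|\lesssim\|\psi'\|_\infty$ uniformly in $p$;
\item hence there are finitely many trigonometric polynomials $\psi_1,\dots,\psi_r$ with $\min_p\max_l|\Lambda_p(\psi_l)|>0$;
\item this is exactly the uniform variance bound for $g_i$ drawn from their span.
\end{itemize}
So the frequency set comes from compactness, not from any domination.

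The non-vanishing is elementary.
\begin{itemize}
\item \emph{Finite $N$.} Here $\Lambda_p$ is a positive multiple of $\mu=\sum_{k\ge0}b^{-\alpha k}(\delta_{b^ky}-\delta_{b^kx})$ with $x\ne y$. The set of $k\ge1$ with $b^ky=x$ is either empty or equal to $k_0+\{k\ge0:b^kx=x\}$ for some $k_0\ge1$. So $\mu(\{x\})$ equals $-s$ or $-(1-b^{-\alpha k_0})s$, where $s=\sum_{k\ge0:\,b^kx=x}b^{-\alpha k}\ge1$; it is never $0$.
\item \emph{$N=\infty$, $t>0$} (the case $t<0$ is symmetric). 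Integration by parts gives $\Lambda_p(\psi)=\int_{\mathbb T}\psi'\,\tilde H$, where $\tilde H=\sum_{k\in\mathbb Z}b^{-\alpha k}(N_k-b^kt)\in L^1(\mathbb T)$. Here $N_k(x)$ counts how often the arc of length $b^kt$ starting at the $k$-th orbit point $\zeta_k$ covers $x$. Since $\Lambda_p\equiv0$ would force $\tilde H$ to be a.e.\ constant, it suffices to show $\tilde H$ is essentially unbounded. The terms with $k\ge0$ sum to a function bounded by $(1-b^{-\alpha})^{-1}$. For every $j\ge1$, the terms with $k<0$ sum to at least $b^{\alpha j}-t/(b^{1-\alpha}-1)$ on the arc of length $b^{-j}t$ starting at $\zeta_{-j}$, which has positive measure. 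So $\tilde H$ is essentially unbounded and $\Lambda_p\neq0$.
\end{itemize}

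With this lemma in place, your covering count closes the argument: $D$ is finite, of order $2/\alpha+1/(1-\alpha)$, times a $\log(1/\varepsilon)$ factor for the truncation levels.
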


It is an interesting question to determine, or at least give an estimate about the minimal dimension $d$ in the above theorem. The next proposition shows that the smaller the Hölder exponent $\aaa$, the more functions we need for the Weierstrass embedding, that is we need larger $d$. However we do not know how close is the given estimate to the best possible value of $d$ corresponding to the given $\aaa$.

\begin{proposition}[Proposition 2.4  of \cite{buczolich2025levelsets}]\label{*propaaabound}
  Let $b \geq 2$ and $d \geq 1$ be integers, and let $0 < \alpha < 1$. If $d <  \frac{1}{\alpha}$ and $\mathcal{G} = \{g_0, \ldots, g_{d-1}\}$ is a finite collection of Lipschitz functions on $\mathbb{T}$, then the Weierstrass embedding $\Phi_{\mathcal{G}}^{\alpha,b} \colon \mathbb{T} \to \mathbb{R}^d$ is not $\alpha$-bi-H\"older.
\end{proposition}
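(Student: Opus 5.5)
The plan is to show that no map $\mathbb T\to\mathbb R^d$ with $d<\frac1\alpha$ can be $\alpha$-bi-H\"older. The Weierstrass structure of $\Phi_{\mathcal G}^{\alpha,b}$ is then irrelevant, and we only use that it is a continuous, hence bounded, map on the compact circle. I read ``$\alpha$-bi-H\"older'' as the two-sided estimate
\[
c\,|x-y|^{\alpha}\le |\Phi(x)-\Phi(y)|\le C\,|x-y|^{\alpha}\qquad (x,y\in\mathbb T),
\]
with $0<c\le C$ and $|x-y|$ the distance on $\mathbb T$. This is the natural meaning, since it makes $\Phi^{-1}$ a $\frac1\alpha$-H\"older map on $\Phi(\mathbb T)$, and it is the reading under which the exponent $\frac1\alpha$ in the proposition arises. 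Only the lower bound, together with boundedness of $\Phi(\mathbb T)$, will be used.

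The argument is a packing count. Suppose, for contradiction, that $\Phi=\Phi_{\mathcal G}^{\alpha,b}$ satisfies the lower bound with some $c>0$. Since $\Phi$ is continuous on the compact set $\mathbb T$, its image lies in a ball $B(0,R)\subset\mathbb R^d$. For $n\in\N$, take the $n$ points $x_i=i/n$, $i=0,\dots,n-1$. Their mutual distances in $\mathbb T$ are at least $1/n$, so the images satisfy $|\Phi(x_i)-\Phi(x_j)|\ge c\,n^{-\alpha}$ for $i\ne j$. Hence the open balls $B(\Phi(x_i),\tfrac c2 n^{-\alpha})$ are pairwise disjoint and contained in $B(0,R+c)$.

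Comparing $d$-dimensional Lebesgue measures gives
\[
n\,\omega_d\Big(\frac c2\Big)^{d} n^{-\alpha d}\le \omega_d (R+c)^d ,
\qquad\text{that is,}\qquad n^{1-\alpha d}\le \Big(\frac{2(R+c)}{c}\Big)^{d}.
\]
Since $d<\frac1\alpha$ means $1-\alpha d>0$, the left-hand side tends to infinity as $n\to\infty$, which is a contradiction. So $\Phi$ cannot be $\alpha$-bi-H\"older.

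Equivalently, one can phrase this dimensionally. The inverse map $\Phi^{-1}\colon\Phi(\mathbb T)\to\mathbb T$ is $\frac1\alpha$-H\"older, so by the standard H\"older dimension estimate (the same mechanism as Lemma \ref{alphameasure}) we get $1=\dimh\mathbb T\le \alpha\dimh\Phi(\mathbb T)\le \alpha d<1$, again a contradiction.

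The only real obstacle is bookkeeping about which exponent the inverse is required to satisfy. If instead the inverse were only asked to be $\alpha$-H\"older in the literal sense, the resulting lower bound $|\Phi(x)-\Phi(y)|\gtrsim|x-y|^{1/\alpha}$ would be too weak to force anything. I would therefore state the two-sided definition explicitly at the start of the proof, as above, before running the packing count.
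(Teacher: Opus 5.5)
Your proof is correct and is essentially the argument behind the cited result. The obstruction is purely dimensional: an $\alpha$-bi-H\"older image of $\mathbb{T}$ would have $\dimh \Phi(\mathbb{T}) \ge 1/\alpha > d$, which is impossible in $\R^d$, and your packing count is just the elementary box-counting form of the Hausdorff-dimension comparison you also state. You are also right to work with the two-sided estimate $c|x-y|^{\alpha}\le|\Phi(x)-\Phi(y)|\le C|x-y|^{\alpha}$: this is the intended meaning of $\alpha$-bi-H\"older here, and the survey's phrase ``its inverse is $\alpha$-H\"older'' must be read this way for the proposition to make sense.
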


The fact that for smaller $\aaa$ we need more functions is illustrated on Figure \ref{fig-alphalargeb}. The functions from Figure \ref{fig-alphalarge0904}  were taken and in the plane those points $x,y$ were colored where for the given function \eqref{eqholder} does not hold with a constant $C=0.6$. For the Weierstrass embedding one needs that there are no points which are colored by all colors simultaneously.
Of course many different positive $C$s can be used.
Zero loci of the functions, that is where the left hand side of \eqref{eqholder} is zero are bad for any $C$. These points are colored a little darker on Figure \ref{fig-alphalargeb}. The two sides of the figure illustrate that the smaller $\alpha$ we get more complicated regions and it seems to be more likely that we get points colored by all three colors. Of course, these images are not proving anything and if somebody zooms in near the diagonal starting at $(0,0)$ they can get very complicated.

\begin{figure}[ht!]
\centering
\includegraphics[height=0.49\linewidth]{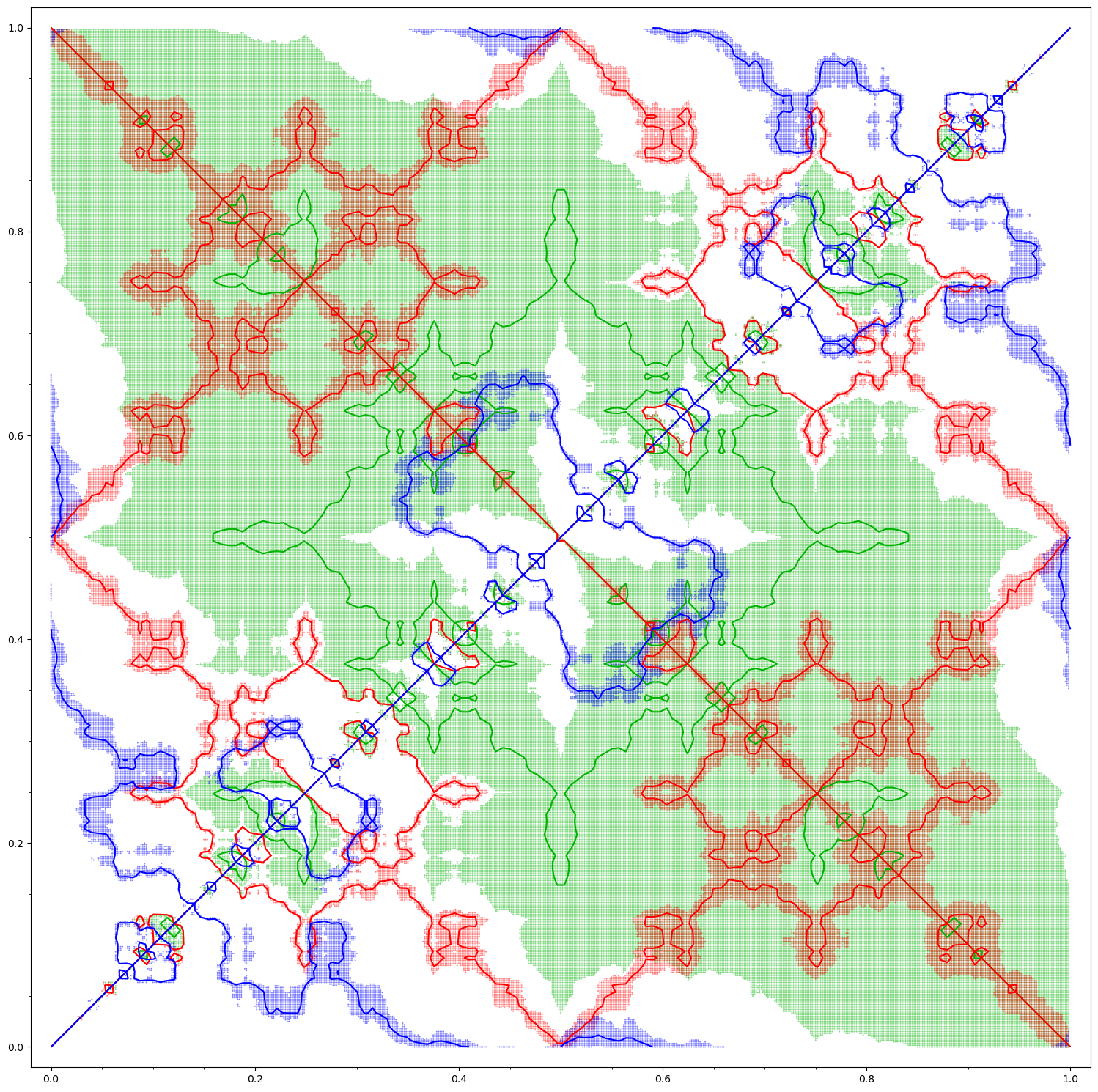}
\includegraphics[width=0.49\linewidth]{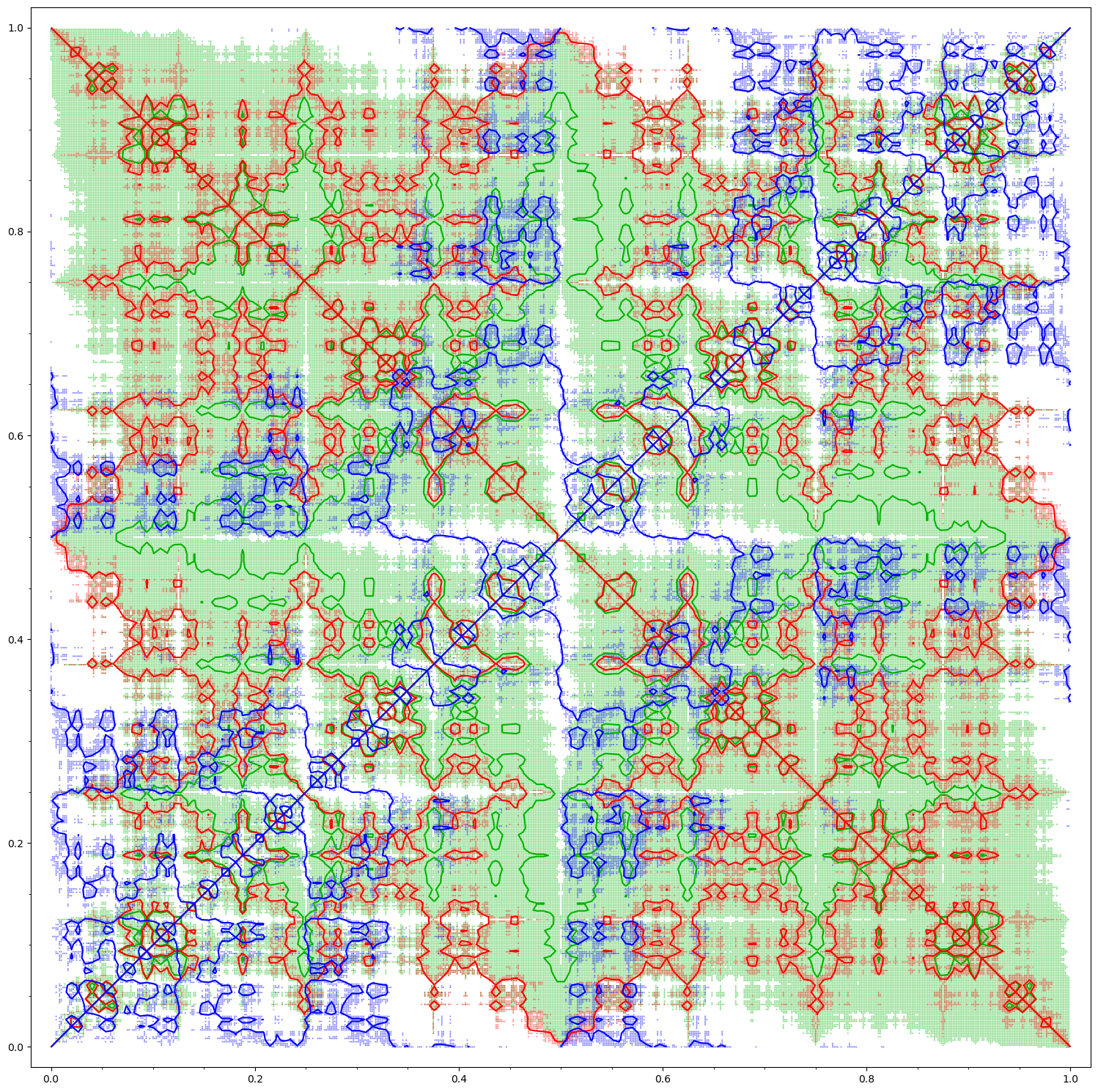} 
\captionsetup{width=0.9\textwidth}
\caption{For the functions of Figure \ref{fig-alphalarge0904}  regions where \eqref{eqholder} does not hold with $C=0.6$.  Left: $\aaa=0.9$ Right: $\aaa=0.4$}
\label{fig-alphalargeb}
\end{figure}

To prove prevalence results Weierstrass embedding was used the following way. Suppose that $b \ge 2$ is an integer and $0<\alpha<1$. 
Using Theorem \ref{thm:main1} take
an $\alpha$-bi-H\"older Weierstrass embedding
  $\Phi = \Phi_{\mathcal{G}}^{\alpha,b}$  associated with the system $\mathcal{G} = \{g_0,\ldots,g_{d-1}\}$. For any $W \in  {\mathcal {W}}^{\alpha,b}$ and $ {\mathbf {t}} \in  {\mathbb {R}}^d$, the function $W_{{\mathbf {t}}} \colon \mathbb{T} \to  {\mathbb {R}}$, defined by
\begin{equation*}
  W_{{\mathbf {t}}}(x)=W(x)+ \langle {\mathbf {t}},\Phi(x) \rangle,
\end{equation*}
belongs to $ {\mathcal {W}}^{\alpha,b}$, (here $\langle .,.\rangle $ denotes the scalar product). 
We denote by $ {\lll}_ {\mathbf {t}}$ the occupation measure of $W_{{\mathbf {t}}}$, that is $ \lambda_ {\mathbf {t}}(A)={\mathcal {L}}\{x : W_{{\mathbf {t}}}(x)\in A\}$.

We had to study the absolute continuity of the occupation measure of $W_{{\mathbf {t}}}$
and showed in the next theorem that the occupation measure $ \lambda_ {\mathbf {t}}$ is absolutely continuous for almost every $ {\mathbf {t}}$.

\begin{theorem}[Theorem 3.1 of \cite{buczolich2025levelsets}] \label{*prabscb}
  Let $b \ge 2$ be an integer, $0<\alpha<1$, and $W \in  {\mathcal {W}}^{\alpha,b}$. Then for $ {\mathcal {L}}^d$-almost every $ {\mathbf {t}} \in  {\mathbb {R}}^d$ the occupation measure $ \lambda_ {\mathbf {t}}$ associated with $W_ {\mathbf {t}}$ is absolutely continuous with density in $L^2( {\mathbb {R}})$.
\end{theorem}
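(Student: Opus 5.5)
We want: for a.e. $\mathbf t$, $\lambda_{\mathbf t}$ is absolutely continuous with $L^2$ density. The plan is the classical Fourier/energy argument (as for local times of Gaussian-type processes, Kahane, Geman--Horowitz): show that
\[
\int_{B}\int_{\mathbb R}|\widehat{\lambda_{\mathbf t}}(\xi)|^2\,d\xi\,d\mathbf t<\infty
\]
for every ball $B\subset\mathbb R^d$. By Plancherel, $\int|\widehat{\lambda_{\mathbf t}}|^2<\infty$ is equivalent to $\lambda_{\mathbf t}\ll\mathcal L$ with density in $L^2(\mathbb R)$. Finiteness of the integral over $B$ then gives the property for a.e. $\mathbf t\in B$, and a countable union of balls finishes the proof.

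Since $\widehat{\lambda_{\mathbf t}}(\xi)=\int_{\mathbb T}e^{-i\xi W_{\mathbf t}(x)}dx$, we have
\[
|\widehat{\lambda_{\mathbf t}}(\xi)|^2=\int\!\!\int e^{-i\xi (W(x)-W(y))}e^{-i\xi\langle \mathbf t,\Phi(x)-\Phi(y)\rangle}\,dx\,dy .
\]
Integrating in $\mathbf t$ directly over a ball does not give absolute convergence. So we instead smooth: replace $\mathbf 1_B$ by a Gaussian (or Schwartz) weight $\psi(\mathbf t)\ge \mathbf 1_B$ whose Fourier transform $\widehat\psi$ is nonnegative and rapidly decaying. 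After Fubini (justified by the positivity of the resulting integrand, or by truncating $\xi$ and applying monotone convergence),
\[
\int\psi(\mathbf t)\int|\widehat{\lambda_{\mathbf t}}(\xi)|^2d\xi\,d\mathbf t\le\int\!\!\int\int_{\mathbb R}\widehat\psi\big(\xi(\Phi(x)-\Phi(y))\big)\,d\xi\,dx\,dy,
\]
where we bound the unimodular factor $e^{-i\xi(W(x)-W(y))}$ by $1$. The inner integral equals $C_\psi/|\Phi(x)-\Phi(y)|$, because $\widehat\psi$ is integrable along lines through the origin; substituting $s=\xi|\Phi(x)-\Phi(y)|$, the integral over $s$ is bounded uniformly in direction.

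The key step is the lower Hölder bound coming from Theorem \ref{prop:bi-holder}: since $\Phi$ is $\alpha$-bi-Hölder, $|\Phi(x)-\Phi(y)|\ge c|x-y|^{1/\alpha}$. This is the point where I expect trouble, because it gives $\int\!\!\int|x-y|^{-1/\alpha}dx\,dy$, which diverges for $\alpha<1$. So the bi-Hölder property must be used in the opposite direction: the inverse $\Phi^{-1}$ being $\alpha$-Hölder means $|x-y|\le C|\Phi(x)-\Phi(y)|^{\alpha}$, that is, $|\Phi(x)-\Phi(y)|\ge c|x-y|^{1/\alpha}$, which is the same bound. Hence the first-moment estimate is too crude, and I would refine it by not discarding the phase; alternatively, use the $d$-dimensional structure directly. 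Integrating over $\mathbf t$ in all of $\mathbb R^d$ against a smooth weight only uses the one-dimensional projection of $\widehat\psi$ in the direction of $\Phi(x)-\Phi(y)$, which is where the loss occurs.

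The fix I would try is to exploit the upper Hölder bound $|\Phi(x)-\Phi(y)|\le C|x-y|^\alpha$ together with the lower one at scale-localized pieces. Decompose $\xi$ dyadically, $|\xi|\sim 2^j$, and decompose pairs $(x,y)$ by the scale of $|x-y|$. For pairs with $|\xi||\Phi(x)-\Phi(y)|\gtrsim1$, use the rapid decay of $\widehat\psi$. This yields the bound
\[
\int\!\!\int\min\big(\Lambda,|\Phi(x)-\Phi(y)|^{-1}\big)\,dx\,dy
\]
where $\Lambda$ is the range of $|\xi|$, and a posteriori it must be finite with $\Lambda=\infty$. So the crux is really the energy integral $\int\!\!\int|\Phi(x)-\Phi(y)|^{-1}dx\,dy<\infty$. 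I expect the intended argument uses a dimension-type bound: $\Phi(\mathbb T)$ has dimension $1/\alpha>1$ and the pushforward of Lebesgue measure under $\Phi$ has finite $1$-energy, because the bi-Hölder property gives $\mathcal L\{y:|\Phi(x)-\Phi(y)|<r\}\le Cr^{1/\alpha}$, and then $\int r^{-1}\,d(r^{1/\alpha})<\infty$ since $1/\alpha>1$. This is the correct way to use the inverse-Hölder bound, as a ball-measure estimate rather than a pointwise lower bound, and it resolves the apparent divergence. The main obstacle is thus arranging Fubini and the transversality computation cleanly; the energy estimate itself follows from Theorem \ref{prop:bi-holder}.
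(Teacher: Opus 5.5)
\textbf{There is a gap in the key step, the finiteness of the energy integral.}

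Your reduction is fine. With a Gaussian weight $\psi>0$ (so $\widehat\psi>0$), Fubini--Tonelli and discarding the phase $e^{-i\xi(W(x)-W(y))}$ give
\[
\int_{\mathbb R^d}\psi(\mathbf t)\int_{\mathbb R}|\widehat{\lambda_{\mathbf t}}(\xi)|^2\,d\xi\,d\mathbf t\le C_\psi\int_{\mathbb T}\int_{\mathbb T}\frac{dx\,dy}{|\Phi(x)-\Phi(y)|}.
\]
Nothing is lost by using only the one-dimensional profile of $\widehat\psi$, and the dyadic detour is unnecessary.

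The problem is that your justification that this energy integral is finite does not hold up. You read the bi-H\"older property as $|\Phi(x)-\Phi(y)|\ge c|x-y|^{1/\alpha}$, and you correctly note that this gives $\iint|x-y|^{-1/\alpha}=\infty$. You then assert that using the same property ``as a ball-measure estimate'' yields $\mathcal L\{y:|\Phi(x)-\Phi(y)|<r\}\le Cr^{1/\alpha}$. That does not follow. From $|x-y|\le C|\Phi(x)-\Phi(y)|^{\alpha}$ one only gets $\mathcal L\{y:|\Phi(x)-\Phi(y)|<r\}\le Cr^{\alpha}$, and $\int_0 r^{-1}\,d(r^{\alpha})=\infty$. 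By the layer-cake formula, a ball estimate derived from a pointwise bound is the same computation as the pointwise one, so the reformulation cannot remove the divergence. Likewise, $\dimh\Phi(\mathbb T)=1/\alpha$ does not follow from your reading.

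The missing point is what $\alpha$-bi-H\"older means:
\[
C^{-1}|x-y|^{\alpha}\le|\Phi(x)-\Phi(y)|\le C|x-y|^{\alpha},
\]
with distance taken on $\mathbb T$. Equivalently, $\Phi^{-1}$ is $1/\alpha$-H\"older on $\Phi(\mathbb T)$; the survey's phrase ``its inverse is $\alpha$-H\"older'' is loose. The threshold $d<1/\alpha$ in Proposition~\ref{*propaaabound} reflects exactly this. It is the dimension count $1/\alpha=\dimh\Phi(\mathbb T)\le d$, which needs the lower bound $|\Phi(x)-\Phi(y)|\gtrsim|x-y|^{\alpha}$.

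With that bound the energy integral is at most $C\int_{\mathbb T}\int_{\mathbb T}|x-y|^{-\alpha}\,dx\,dy<\infty$, because $\alpha<1$; this is precisely where $\alpha<1$ is used. The rest of your argument then goes through: $\widehat{\lambda_{\mathbf t}}\in L^2(\mathbb R)$, hence $\lambda_{\mathbf t}$ has an $L^2$ density, for $\mathcal L^d$-a.e.\ $\mathbf t$. This follows directly since $\psi>0$ everywhere, without a countable union of balls.
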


This implies  that the $d$-prevalent $\alpha$-Weierstrass functions in $\mathcal{W}^{\alpha,b}$  satisfy the local time (LT) condition. 

It is worth comparing this theorem to Remark \ref{defoccmeas} since  we saw that the occupation measure is singular for the Takagi function and  for   $W(x) + cx$ for all $c$. This does not contradict the above theorem, since  the Weierstrass-Cellerier function and the Takagi function belong to the class $\mathcal{W}^{1,2}$,
while Theorem \ref{*prabscb} is about prevalent functions in the spaces $\mathcal{W}^{\alpha,b}$, with $0 < \alpha < 1$.

For the range $0 < \alpha <  \frac{1}{2}$ scrutinizing the proof  of \cite[Proposition 3.2]{AnttilaBaranyKaenmaki2025} one can see that for the prevalent $\aaa$-Hölder function a stronger result holds: the occupation measure is not only absolutely continuous but also has a bounded and continuous density. By combining ideas from the proof  of \cite[Proposition 3.2]{AnttilaBaranyKaenmaki2025} with our methods we proved the following theorem.

\begin{theorem}[Theorem 3.2 of \cite{buczolich2025levelsets}] \label{*prabscbb}
  Let $b \ge 2$ be an integer, $0<\alpha< \frac12$, and $W \in  {\mathcal {W}}^{\alpha,b}$. Then for $ {\mathcal {L}}^d$-almost every $ {\mathbf {t}} \in  {\mathbb {R}}^d$ the occupation measure $ \lambda_ {\mathbf {t}}$ associated with $W_ {\mathbf {t}}$ is absolutely continuous with bounded and continuous density.
\end{theorem}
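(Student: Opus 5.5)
The plan is to show that for Lebesgue-almost every $\mathbf t$ the density $\rho_{\mathbf t}=d\lambda_{\mathbf t}/d\mathcal L$ has an integrable Fourier transform. The Fourier transform of $\lambda_{\mathbf t}$ is
\[
\widehat{\lambda_{\mathbf t}}(\xi)=\int_{\mathbb T}e^{-2\pi i\xi W_{\mathbf t}(x)}\,dx .
\]
If $\widehat{\lambda_{\mathbf t}}\in L^1(\mathbb R)$, then $\lambda_{\mathbf t}$ is absolutely continuous, and its density, given by Fourier inversion, is bounded and continuous (Riemann--Lebesgue). Since $\widehat{\lambda_{\mathbf t}}$ is bounded by $1$, it suffices to check integrability over $|\xi|\ge 1$. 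As in the approach to Theorem \ref{*prabscb}, I will average over $\mathbf t$ in a bounded region and aim for $\mathbb E_{\mathbf t}\,|\widehat{\lambda_{\mathbf t}}(\xi)|$ to decay faster than $|\xi|^{-1}$.

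By Cauchy--Schwarz it is enough to control $\int_B|\widehat{\lambda_{\mathbf t}}(\xi)|^2\,d\mathbf t$, where $B$ is a ball, or better a smooth compactly supported weight $\psi(\mathbf t)$. Expanding the square gives
\[
\int\psi(\mathbf t)\,|\widehat{\lambda_{\mathbf t}}(\xi)|^2\,d\mathbf t=\iint e^{-2\pi i\xi(W(x)-W(y))}\,\widehat\psi\bigl(\xi(\Phi(x)-\Phi(y))\bigr)\,dx\,dy .
\]
Here $\widehat\psi$ decays faster than any polynomial, and Theorem \ref{prop:bi-holder} gives $|\Phi(x)-\Phi(y)|\ge c|x-y|^\alpha$. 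So the integrand is essentially negligible unless $|x-y|\lesssim|\xi|^{-1/\alpha}$. This yields
\[
\mathbb E|\widehat{\lambda_{\mathbf t}}(\xi)|^2\lesssim|\xi|^{-1/\alpha},
\]
and hence $\mathbb E|\widehat{\lambda_{\mathbf t}}(\xi)|\lesssim|\xi|^{-1/(2\alpha)}$. This is integrable on $|\xi|\ge1$ exactly when $1/(2\alpha)>1$, that is when $\alpha<\tfrac12$. This is the same computation that gives the $L^2$ result of Theorem \ref{*prabscb}, pushed one step further, and it is where the hypothesis $\alpha<\tfrac12$ enters.

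The remaining steps are as follows. First, apply Tonelli:
\[
\int\psi(\mathbf t)\int_{|\xi|\ge1}|\widehat{\lambda_{\mathbf t}}(\xi)|\,d\xi\,d\mathbf t\le\int_{|\xi|\ge1}\Bigl(\int\psi\Bigr)^{1/2}\Bigl(\int\psi\,|\widehat{\lambda_{\mathbf t}}|^2\Bigr)^{1/2}d\xi<\infty .
\]
So $\widehat{\lambda_{\mathbf t}}\in L^1$ for almost every $\mathbf t$ in the support of $\psi$. Second, cover $\mathbb R^d$ by countably many such supports. Third, apply Fourier inversion to conclude that the density exists and is bounded and continuous. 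Measurability of $(\mathbf t,\xi)\mapsto\widehat{\lambda_{\mathbf t}}(\xi)$ is clear by continuity.

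The main obstacle is making the localization estimate rigorous with only the bi-Hölder lower bound. Concretely, the task is to bound
\[
\iint\bigl|\widehat\psi\bigl(\xi(\Phi(x)-\Phi(y))\bigr)\bigr|\,dx\,dy\le\iint\bigl(1+|\xi|\,c|x-y|^\alpha\bigr)^{-M}dx\,dy .
\]
The natural approach is to substitute $u=x-y$ and split into $|u|\le|\xi|^{-1/\alpha}$ and dyadic shells beyond it. Choosing $M>1/\alpha$ makes the shells sum to $O(|\xi|^{-1/\alpha})$. This step also discards the oscillating factor $e^{-2\pi i\xi(W(x)-W(y))}$ by taking absolute values, so no cancellation from $W$ itself is needed. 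That is consistent with the statement holding for every $W\in\mathcal W^{\alpha,b}$.
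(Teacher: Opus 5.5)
Your proof is correct and follows essentially the route the paper indicates for Theorem~\ref{*prabscbb}, which combines the Fourier-analytic argument of \cite[Proposition 3.2]{AnttilaBaranyKaenmaki2025} with the $\alpha$-bi-H\"older Weierstrass embedding of Theorem~\ref{prop:bi-holder}: average $|\widehat{\lambda_{\mathbf t}}(\xi)|^2$ over the probe parameter, use $|\Phi(x)-\Phi(y)|\gtrsim |x-y|^{\alpha}$ to get the bound $O(|\xi|^{-1/\alpha})$, and conclude $\widehat{\lambda_{\mathbf t}}\in L^1(\mathbb R)$ for almost every $\mathbf t$ precisely because $\alpha<\tfrac12$. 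Make explicit that $\psi$ is smooth, nonnegative and positive on the ball being covered (your Cauchy--Schwarz step and the rapid decay of $\widehat\psi$ both need this, and an indicator of a cube would not suffice), and that $|x-y|$ denotes the distance on $\mathbb T$.
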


Our main goal in \cite{buczolich2025levelsets} was to prove a theorem analogous to 
the main result of \cite{AnttilaBaranyKaenmaki2025} about prevalent $\aaa$-Hölder functions which was the following:

\begin{theorem}[Theorem~1.6 of \cite{AnttilaBaranyKaenmaki2025}]  \label{thm:main_holder}
  A prevalent $\alpha$-H\"older function $f$ on the unit interval satisfies
  \begin{enumerate}[label=(\roman*)]
    \item \label{it:main1_holder} $\udimm(f^{-1}(\{y\})) \le 1-\alpha$ for all $y \in  {\mathbb {R}}$ provided that $0<\alpha< \tfrac12$,
    \item \label{it:main2_holder} $ {\mathcal {L}}^1(\{y \in f([0, 1]): \dimh(f^{-1}(\{y\})) = 1-\alpha\}) > 0$ provided that $0<\alpha<1$.
  \end{enumerate}
\end{theorem}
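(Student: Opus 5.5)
The proof is by prevalence with a finite-dimensional probe. Fix an $\alpha$-bi-H\"older embedding $\Phi\colon[0,1]\to\R^d$, for instance a Weierstrass embedding from Theorem~\ref{prop:bi-holder}. For an arbitrary $\alpha$-H\"older $f$ put $f_{\mathbf t}=f+\langle \mathbf t,\Phi\rangle$. Since the relevant sets are Borel, it suffices to prove both statements for $\mathcal L^d$-a.e.\ $\mathbf t$ in a ball; we weight $\mathbf t$ by a smooth compactly supported bump $\psi$. The only property of the perturbation used throughout is the averaging estimate
\[
\Bigl|\int e^{i\xi\langle\mathbf t,\Phi(x)-\Phi(x')\rangle}\psi(\mathbf t)\,d\mathbf t\Bigr|
=|\widehat\psi(\xi(\Phi(x)-\Phi(x')))|\le C_M\bigl(1+|\xi|\,|x-x'|^{\alpha}\bigr)^{-M}.
\]
It follows from the rapid decay of $\widehat\psi$ and the lower bi-H\"older bound $|\Phi(x)-\Phi(x')|\ge c|x-x'|^\alpha$.

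\emph{Part (i).} I reduce (i) to showing that, for a.e.\ $\mathbf t$, the occupation measure $\lambda_{\mathbf t}$ of $f_{\mathbf t}$ has a bounded density. This reduction works for every $y$ at once.

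Let $I$ be an interval of length $r$ whose image meets the level $y$. Since $f_{\mathbf t}$ is $\alpha$-H\"older, $f_{\mathbf t}(I)\subset[y-Cr^\alpha,y+Cr^\alpha]$. Hence the $N_r(y)$ such grid intervals contribute $N_r(y)\,r\le\lambda_{\mathbf t}([y-Cr^\alpha,y+Cr^\alpha])\le 2C\|\rho_{\mathbf t}\|_\infty r^\alpha$, where $\rho_{\mathbf t}$ is the density. This gives $N_r(y)\lesssim r^{\alpha-1}$, so $\udimm f_{\mathbf t}^{-1}(y)\le 1-\alpha$.

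To get the bounded density, I show $\widehat\lambda_{\mathbf t}\in L^1$ for a.e.\ $\mathbf t$. Expanding the square,
\[
\int|\widehat\lambda_{\mathbf t}(\xi)|^2\psi(\mathbf t)\,d\mathbf t\le \iint C_M(1+|\xi||x-x'|^\alpha)^{-M}\,dx\,dx'\lesssim|\xi|^{-1/\alpha}.
\]
By Cauchy--Schwarz in $\mathbf t$,
\[
\int\!\!\int|\widehat\lambda_{\mathbf t}(\xi)|\,d\xi\,\psi(\mathbf t)\,d\mathbf t\lesssim\int(1+|\xi|)^{-1/(2\alpha)}d\xi,
\]
which is finite exactly when $\alpha<\tfrac12$. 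Fourier inversion then yields a bounded continuous density.

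\emph{Part (ii).} I first prove the upper bound $\dimh f_{\mathbf t}^{-1}(y)\le1-\alpha$ for a.e.\ $y$, and this holds for every $\alpha$-H\"older function. Summing the count above over $y$ gives $\int N_r(y)\,dy\lesssim r^{-1}\cdot r^\alpha$. By Fatou's lemma, $\liminf_r r^{1-\alpha+\varepsilon}N_r(y)=0$ for a.e.\ $y$, and $\dimh$ is at most the lower box dimension.

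For the lower bound, I use the same computation with exponent $1/\alpha>1$, which gives $\rho_{\mathbf t}\in L^2$ for a.e.\ $\mathbf t$ and all $0<\alpha<1$. Thus $\{y:\rho_{\mathbf t}(y)>0\}$ has positive measure. On each such level set I put the local-time measure $\nu_y$, obtained by disintegrating $\mathcal L|_{[0,1]}$ along $f_{\mathbf t}$, and control its $s$-energy on average:
\[
\int\!\!\int I_s(\nu_y)\,dy\,\psi(\mathbf t)\,d\mathbf t=\iint|x-x'|^{-s}\,\mathbb E\,\delta\bigl(f_{\mathbf t}(x)-f_{\mathbf t}(x')\bigr)\,dx\,dx'.
\]
The averaged delta is the density of $\langle\mathbf t,\Phi(x)-\Phi(x')\rangle$ at a point, which is $\lesssim|x-x'|^{-\alpha}$. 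The double integral is therefore finite for $s<1-\alpha$. Hence for a.e.\ $\mathbf t$ and a.e.\ $y$ with $\nu_y\neq0$ we get $\dimh f_{\mathbf t}^{-1}(y)\ge s$; letting $s\uparrow1-\alpha$ along a sequence gives (ii).

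\emph{Main obstacle.} The hard step is making the local-time energy identity rigorous. I would do it by approximating the delta function by $\varepsilon^{-1}\mathbf 1_{[-\varepsilon,\varepsilon]}$ and the measures $\nu_y$ by $\varepsilon^{-1}\mathcal L|_{f_{\mathbf t}^{-1}(y-\varepsilon,y+\varepsilon)}$. Then I would apply Fatou's lemma together with weak lower semicontinuity of energy, checking that $\nu_y$ is nonzero exactly where $\rho_{\mathbf t}(y)>0$. A secondary point is the measurability needed to pass from "a.e.\ $\mathbf t$ for each $f$" to prevalence.
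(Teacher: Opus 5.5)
Your proof is correct. For part (i) it follows essentially the route the survey attributes to \cite{AnttilaBaranyKaenmaki2025} and to Theorem~\ref{*prabscbb}. You perturb by an $\alpha$-bi-H\"older probe and show $\widehat\lambda_{\mathbf t}\in L^1$ for a.e.\ $\mathbf t$; your Cauchy--Schwarz bound delivers this exactly in the range $\alpha<\tfrac12$. This gives a bounded continuous occupation density, and counting grid intervals then bounds the upper Minkowski dimension of every level set at once.

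For part (ii) your route is genuinely different. The survey describes the proof in \cite{AnttilaBaranyKaenmaki2025} as an ``almost everywhere method''. First, slices of the graph in almost every direction are treated (a Marstrand-type slicing statement). Then the horizontal direction, i.e.\ the level sets, is reached by adding a linear function $cx$ to $f$. You instead work on horizontal slices directly. Averaging over $\mathbf t$ gives the density bound $\lesssim|x-x'|^{-\alpha}$ for $f_{\mathbf t}(x)-f_{\mathbf t}(x')$. This plays exactly the role that averaging over directions plays inside a slicing theorem. Your local-time energy estimate then gives $\dim_H f_{\mathbf t}^{-1}(\{y\})\ge s$ for a.e.\ $y$ with $\rho_{\mathbf t}(y)>0$.

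Your approach is self-contained, needing neither a prevalent graph-dimension result nor a slicing theorem. It also never adds a linear function, so it works in function classes not closed under $f\mapsto f+cx$. This is precisely the obstruction the survey points out for the periodic class $\mathcal W^{\alpha,b}$. The slicing route, in exchange, reuses standard geometric measure theory and controls oblique slices (the $\theta$-oblique occupation measures) for almost every direction simultaneously.

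Two small points. First, a Weierstrass embedding is $1$-periodic, so $\Phi(0)=\Phi(1)$ and the lower bound only holds as $|\Phi(x)-\Phi(x')|\ge c\|x-x'\|_{\mathbb T}^{\alpha}$. Your integrals $\iint(1+|\xi|\,\|x-x'\|_{\mathbb T}^{\alpha})^{-M}\,dx\,dx'$ and $\iint|x-x'|^{-s}\|x-x'\|_{\mathbb T}^{-\alpha}\,dx\,dx'$ keep the same order, so nothing breaks. Still, $x\mapsto\Phi(x/2)$, or an Assouad embedding of the snowflake $([0,1],|x-x'|^{\alpha})$, avoids the issue.

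Second, for measurability in the non-separable space $C^\alpha$, it suffices to exhibit Borel subsets of the good sets. For (i) one can take $\bigcup_n\{f:\mathcal L^1(f^{-1}(J))\le n|J|$ for all open rational intervals $J\}$. This set is $F_\sigma$ in the sup-norm topology, since $f\mapsto\mathcal L^1(f^{-1}(J))$ is lower semicontinuous. For (ii), your approximate energies give a similar Borel description.
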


We proved  an exact analogue of Theorem \ref{thm:main_holder} for prevalent $\alpha$-Weierstrass functions in the following two theorems:

\begin{theorem}[Theorem 1.2  of \cite{buczolich2025levelsets}] \label{thm:main1}
  For any integer $b \ge 2$, a prevalent function $g \in \mathrm{Lip}(\mathbb{T})$ satisfies
  \begin{equation*}
    \udimm((W_g^{\alpha,b})^{-1}(\{y\})) \le 1-\alpha
  \end{equation*}
  for all $y \in  {\mathbb {R}}$ provided that $0<\alpha< \tfrac12.$
\end{theorem}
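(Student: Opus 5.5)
The plan is to mimic the proof of part (i) of Theorem \ref{thm:main_holder} in \cite{AnttilaBaranyKaenmaki2025}, with the full space of $\alpha$-H\"older functions replaced by the finite-dimensional probe space of the Weierstrass embedding. Fix $b\ge 2$ and $0<\alpha<\frac12$, and take an $\alpha$-bi-H\"older Weierstrass embedding $\Phi=\Phi_{\mathcal G}^{\alpha,b}\colon\mathbb T\to\R^d$ from Theorem \ref{prop:bi-holder}. For an arbitrary $W=W_g^{\alpha,b}$ we consider the family $W_{\mathbf t}=W+\langle\mathbf t,\Phi\rangle$, $\mathbf t\in\R^d$. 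Each $W_{\mathbf t}$ lies in $\mathcal W^{\alpha,b}$, since $W_{\mathbf t}=W^{\alpha,b}_{g+\sum t_ig_i}$. So the probe space is the $d$-dimensional span of $g_0,\dots,g_{d-1}$ in $\mathrm{Lip}(\mathbb T)$. It then suffices to show that for every $W$ and for $\mathcal L^d$-a.e.\ $\mathbf t$ we have $\udimm(W_{\mathbf t}^{-1}(\{y\}))\le 1-\alpha$ simultaneously for all $y$. Measurability of the good set in $\mathrm{Lip}(\mathbb T)$ should be routine, because the condition can be written through countably many box-counting inequalities.

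The core is a box-counting estimate. Fix $n$ and cover $\mathbb T$ by the $b^{n}$ $b$-adic intervals $I$ of length $b^{-n}$. On each $I$ the oscillation of $W_{\mathbf t}$ is at most $Cb^{-\alpha n}$ (the H\"older bound, uniform for $\mathbf t$ in a bounded set). Hence $N_n(y)$, the number of intervals $I$ meeting $W_{\mathbf t}^{-1}(y)$, is bounded by the number of $I$ with $|W_{\mathbf t}(x_I)-y|\le Cb^{-\alpha n}$, where $x_I$ is the left endpoint of $I$. The aim is
\[
\sup_y N_n(y)\le b^{n(1-\alpha+\varepsilon)}
\]
for all large $n$ and a.e.\ $\mathbf t$. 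To get it we discretize $y$ to a grid of mesh $b^{-\alpha n}$, which has $O(b^{\alpha n})$ points. We then bound moments
\[
\int_{B}\#\{I: |W_{\mathbf t}(x_I)-y|\le Cb^{-\alpha n}\}^k\,d\mathbf t
\]
over a ball $B$, by expanding into sums over $k$-tuples $(I_1,\dots,I_k)$. For a $k$-tuple with points $x_{I_1},\dots,x_{I_k}$ we need the $\mathbf t$-measure of the event that all $W_{\mathbf t}(x_{I_j})$ lie in a $b^{-\alpha n}$-window around $y$. This is where bi-H\"olderness enters. The map $\mathbf t\mapsto\langle\mathbf t,\Phi(x)-\Phi(x')\rangle$ has gradient of size $|\Phi(x)-\Phi(x')|\gtrsim|x-x'|^{\alpha}$. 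Ordering the points and conditioning successively, the $j$-th constraint costs a factor of roughly $b^{-\alpha n}/\max(\mathrm{dist}_j^{\alpha},b^{-\alpha n})$, where $\mathrm{dist}_j$ is the distance to the nearest previously chosen point; the first constraint costs $b^{-\alpha n}$ outright. Summing over configurations in the style of a chaining or ``nearest-neighbour tree'' count gives a bound $\lesssim b^{nk(1-\alpha)}\cdot b^{n\varepsilon}$. The condition $\alpha<\frac12$ is used here, as in \cite{AnttilaBaranyKaenmaki2025}: it makes $\sum_{m}b^{m}\cdot b^{-\alpha m}\cdot(b^{-\alpha(n-m)})$-type sums behave, so that close pairs do not dominate. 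Taking $k$ large, applying Chebyshev, taking a union bound over the $O(b^{\alpha n})$ grid values of $y$, and then Borel--Cantelli in $n$ yields the uniform estimate for a.e.\ $\mathbf t$. Finally we let $\varepsilon\downarrow0$ along a sequence.

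I expect the main obstacle to be the multi-point transversality step. Bi-H\"olderness controls only the differences $\Phi(x)-\Phi(x')$ of pairs. For $k$ points, however, we need the joint map $\mathbf t\mapsto(W_{\mathbf t}(x_{I_j}))_j$ to spread mass, and this map has rank at most $d$. So for $k>d$ one cannot obtain independent factors for all constraints. My first attempt would be to avoid high moments altogether. I would use only $k\le d$ or even $k=2$, and bound $N_n$ through an $L^2$ estimate, namely the energy $\sum_{I,I'}\mathcal L^d\{\mathbf t: |W_{\mathbf t}(x_I)-W_{\mathbf t}(x_{I'})|\le 2Cb^{-\alpha n}\}\lesssim\sum_{I,I'}b^{-\alpha n}|x_I-x_{I'}|^{-\alpha}$. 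This converges to $b^{n(2-\alpha)}$ precisely when $\alpha<1$. I would combine it with the bounded continuous density of the occupation measure from Theorem \ref{*prabscbb}, which is where $\alpha<\frac12$ enters, to control $N_n(y)$ uniformly in $y$ by a multiscale counting argument, subdividing and reusing the density bound at each scale.
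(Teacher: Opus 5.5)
\paragraph{Main route.} Your moment/chaining argument has a genuine gap, and it is the one you suspect.

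The successive-conditioning bound needs each new form $\mathbf t\mapsto\langle\mathbf t,\Phi(x_{I_j})\rangle$ to vary at rate $\gtrsim\mathrm{dist}_j^{\alpha}$ transversally to the slab already cut out by the earlier constraints. Bi-H\"olderness only bounds $|\Phi(x)-\Phi(x')|$ from below. It gives no control of the component of $\Phi(x_{I_j})-\Phi(x_{I_{j'}})$ orthogonal to the span of the earlier differences, and for $k>d$ the joint map has rank at most $d$ anyway.

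Restricting to $k\le d$ or $k=2$ moments does not rescue it. After Chebyshev, the union bound over the $\asymp b^{\alpha n}$ discretised levels $y$ forces roughly $k\varepsilon>\alpha$. So with a fixed number of moments you can reach at best a bound like $1-\alpha+O(1/k)$.

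Two further points. The hypothesis $\alpha<\frac12$ plays no role in the pair sums: as you compute yourself, the energy sum is fine for every $\alpha<1$. And an $L^2$-type energy bound cannot control $\sup_y N_n(y)$.

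\paragraph{Fallback.} Here you name the right ingredient, Theorem \ref{*prabscbb}, but then defer to an unspecified ``multiscale counting argument''. A single-scale observation already finishes the proof, and this is the route the paper takes (following \cite{AnttilaBaranyKaenmaki2025}).

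For every $W$ and $\mathcal L^d$-a.e.\ $\mathbf t$, the measure $\lambda_{\mathbf t}$ has a bounded density $\rho_{\mathbf t}$. Let $C$ be a H\"older constant of $W_{\mathbf t}$ and put $r=b^{-n}$. If a $b$-adic interval $I$ of length $r$ meets $W_{\mathbf t}^{-1}(\{y\})$, then $|W_{\mathbf t}(x)-y|\le Cr^\alpha$ on all of $I$. Hence $I\subset W_{\mathbf t}^{-1}([y-Cr^\alpha,y+Cr^\alpha])$, and since these intervals do not overlap,
\[
N_n(y)\,r\le\lambda_{\mathbf t}\bigl([y-Cr^\alpha,y+Cr^\alpha]\bigr)\le 2C\|\rho_{\mathbf t}\|_\infty\,r^{\alpha}.
\]
So $N_n(y)\le 2C\|\rho_{\mathbf t}\|_\infty\, b^{n(1-\alpha)}$ for all $y$ and $n$ simultaneously. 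This gives $\overline{\dim}_M W_{\mathbf t}^{-1}(\{y\})\le 1-\alpha$ for every $y$. No discretisation of $y$, no moments and no Borel--Cantelli are needed, and your energy estimate is superfluous.

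All the averaging over $\mathbf t$, and the only use of $\alpha<\frac12$, sit inside Theorem \ref{*prabscbb}. There, integrating against a smooth bump in $\mathbf t$ and using the lower bi-H\"older bound gives $\int_B|\widehat{\lambda}_{\mathbf t}(\xi)|\,d\mathbf t\lesssim|\xi|^{-1/(2\alpha)}$. This is integrable exactly when $\alpha<\frac12$, so $\widehat{\lambda}_{\mathbf t}\in L^1$ and the density is bounded and continuous for a.e.\ $\mathbf t$.
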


The existence of the  $\alpha$-bi-H\"older Weierstrass embeddings and methods from \cite{AnttilaBaranyKaenmaki2025} were sufficient to prove \ref{thm:main1}.

To prove Theorem \ref{thm:main_holder}  \ref{it:main2_holder}
in
\cite{AnttilaBaranyKaenmaki2025} an ``almost everywhere method''  was used.
First  slices in almost every direction were considered. To obtain the result for the specific horizontal slice corresponding to the level sets, a linear function was added to the H\"older function. In the Banach space $ {\mathcal {W}}^{\alpha,b}$, which consists of $1$-periodic functions, this trick is not working. To prove the prevalent Weierstrass version of Theorem \ref{thm:main_holder}  \ref{it:main2_holder}
different methods were used to obtain:

\begin{theorem}[Theorem 1.3  of \cite{buczolich2025levelsets}] \label{thm:main2}
  For any integer $b \ge 2$, a prevalent function $g \in \mathrm{Lip}(\mathbb{T})$ satisfies
  \begin{equation*}
     {\mathcal {L}}^1(\{y \in W_g^{\alpha,b}(\mathbb{T}) : \dimh((W_g^{\alpha,b})^{-1}(\{y\})) = 1-\alpha\}) > 0
  \end{equation*}
	provided that $0<\alpha<1$.
\end{theorem}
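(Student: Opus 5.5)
The plan is to prove the statement on a set of positive measure of levels $y$ by a \emph{slicing} argument in which the role of the linear perturbation from \cite{AnttilaBaranyKaenmaki2025} is played by the Weierstrass embedding of Theorem \ref{prop:bi-holder}. Fix $b\ge2$ and $0<\alpha<1$, and take an $\alpha$-bi-H\"older embedding $\Phi=\Phi_{\mathcal G}^{\alpha,b}\colon\mathbb T\to\mathbb R^d$. The probe space is the $d$-dimensional span of $g_0,\dots,g_{d-1}$ in $\mathrm{Lip}(\mathbb T)$. Then $W_{g+\sum t_ig_i}^{\alpha,b}=W_{\mathbf t}$, with $W_{\mathbf t}$ defined as above. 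It therefore suffices to show that, for every fixed $W\in\mathcal W^{\alpha,b}$ and $\mathcal L^d$-a.e. $\mathbf t$, the set of $y$ with $\dimh W_{\mathbf t}^{-1}(y)=1-\alpha$ has positive measure. Borel measurability of the good set has to be checked, but that is routine.

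\textbf{Upper bound.} For every $y$ we have $\dimh W_{\mathbf t}^{-1}(y)\le 1-\alpha$ for all $\mathbf t$ off a null set. This should follow from the standard covering argument for $\alpha$-H\"older functions, combined with a box-counting estimate for graph hits of horizontal lines. Alternatively, in the range $\alpha<\frac12$ it follows directly from Theorem \ref{thm:main1}. In general, the bound only needs to hold for a.e. $y$, and a.e. slices of the graph of an $\alpha$-H\"older function have dimension at most $\overline{\dim}_B\,\mathrm{graph}-1\le 1-\alpha$ by Marstrand's slicing inequality.

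\textbf{Lower bound.} Here I work in the space $\mathbb T\times\mathbb R^d$ and consider the map
\[
(x,\mathbf t)\mapsto W_{\mathbf t}(x)=W(x)+\langle\mathbf t,\Phi(x)\rangle .
\]
For fixed $x$ this is affine in $\mathbf t$ with gradient $\Phi(x)$. I use a Frostman-type energy argument in the spirit of Theorem \ref{*prabscb}. Take $\nu$ to be Lebesgue measure on $\mathbb T$, and consider the slice measures $\nu_{\mathbf t,y}$ obtained by disintegrating $\nu$ along the fibres $W_{\mathbf t}^{-1}(y)$. The goal is to show
\[
\int\!\!\int\! I_s(\nu_{\mathbf t,y})\,dy\,d\mathbf t<\infty \qquad\text{for every } s<1-\alpha ,
\]
where $I_s$ denotes the $s$-energy. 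Formally this integral equals
\[
\iint |x-x'|^{-s}\,\Big(\int \delta\big(W_{\mathbf t}(x)-W_{\mathbf t}(x')\big)\,d\mathbf t\Big)\,dx\,dx' .
\]
The inner transversality integral over a bounded $\mathbf t$-region is bounded by $C/|\Phi(x)-\Phi(x')|\le C|x-x'|^{-\alpha}$, using the lower bi-H\"older bound $|\Phi(x)-\Phi(x')|\ge c|x-x'|^{\alpha}$. The resulting double integral $\iint|x-x'|^{-s-\alpha}$ is finite exactly when $s<1-\alpha$. Together with Theorem \ref{*prabscb}, which gives that a.e. $\mathbf t$ yields a nonzero absolutely continuous occupation measure (so the slices $\nu_{\mathbf t,y}$ are nonzero for a positive measure set of $y$), this gives, for a.e. $\mathbf t$, a positive measure set of $y$ with $\dimh W_{\mathbf t}^{-1}(y)\ge s$. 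Letting $s\uparrow 1-\alpha$ along a sequence, and intersecting with the upper bound, finishes the proof.

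The main obstacle is making the transversality estimate rigorous. The delta-function heuristic must be replaced by a mollified version, $\varepsilon^{-1}\mathbf 1\{|W_{\mathbf t}(x)-W_{\mathbf t}(x')|<\varepsilon\}$, integrated in $\mathbf t$ over a ball. One then has to control the measure of the slab $\{\mathbf t:|a+\langle\mathbf t,v\rangle|<\varepsilon\}$ by $C\varepsilon/|v|$, uniformly in $a=W(x)-W(x')$. This bound is elementary. The subtle points are the lower semicontinuity (Fatou) argument needed to pass from mollified energies to energies of the actual slice measures, and the verification that the slice measures are supported on the level sets.
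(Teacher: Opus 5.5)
Your argument is correct. The survey does not reproduce the proof of this theorem. It only records that the route of \cite{AnttilaBaranyKaenmaki2025} (Marstrand slicing in almost every direction, then adding a linear function to turn an oblique slice into a horizontal one) is unavailable in the periodic class $\mathcal{W}^{\alpha,b}$, and that \cite{buczolich2025levelsets} uses different methods. So a step-by-step comparison is not possible here.

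Your proof is a valid replacement for exactly that step. You average over the probe parameter $\mathbf t$ instead of over directions. The transversality of the horizontal slices,
$\mathcal{L}^d\{\mathbf t\in B:|W_{\mathbf t}(x)-W_{\mathbf t}(x')|<\varepsilon\}\le C\varepsilon|x-x'|^{-\alpha}$,
then comes straight from the Weierstrass embedding of Theorem \ref{prop:bi-holder}. What this buys is a short, self-contained argument:
\begin{itemize}
\item At $s=0$ your computation already yields Theorem \ref{*prabscb}, so you need not even cite it.
\item The Frostman measures on the level sets are explicit: they are the local-time slices of Lebesgue measure.
\item The upper bound needs no prevalence. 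The Eilenberg slicing inequality, applied to a graph of Hausdorff dimension at most $2-\alpha$, gives $\dimh f^{-1}(\{y\})\le 1-\alpha$ for a.e.\ $y$ for every $\alpha$-H\"older $f$.
\end{itemize}
Your opening claim in the upper-bound paragraph (``for every $y$, for a.e.\ $\mathbf t$'') is neither justified nor needed.

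Make one point explicit, because the whole estimate rests on it: you use $|\Phi(x)-\Phi(x')|\ge c|x-x'|^{\alpha}$. The survey's wording (``$\Phi$ and its inverse are both $\alpha$-H\"older''), read literally, only gives $|\Phi(x)-\Phi(x')|\ge c|x-x'|^{1/\alpha}$. Then $\iint|x-x'|^{-s-1/\alpha}\,dx\,dx'=\infty$ for every $s\ge 0$, and your argument would collapse. The two-sided comparability $|\Phi(x)-\Phi(x')|\asymp|x-x'|^{\alpha}$ is the intended meaning. Under the literal reading Proposition \ref{*propaaabound} would be false: $h(x)=\cos 2\pi x$ and $h(x)=\sin 2\pi x$ equal $W^{\alpha,b}_g$ for $g(x)=h(x)-b^{-\alpha}h(bx)$, which gives a bi-Lipschitz Weierstrass embedding into $\mathbb R^2$. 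You should still state the comparability you use.

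The measurability you call routine is cleanest as follows. Pass through the continuous map $g\mapsto W^{\alpha,b}_g$ into the separable space $C(\mathbb T)$, since $\mathrm{Lip}(\mathbb T)$ itself is not separable. Then use Borel measurability of $K\mapsto\dimh K$ on compact sets.
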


In \cite{buczolich2025levelsets} there are several open questions  here we just mention one of them.
Instead of focusing solely on the occupation measure, one can consider slices and projections in various directions. For $\theta \in [0, 2\pi)$, let ${\mathrm {pr}}_\theta \colon \mathbb{R}^2 \to \mathbb{R}$ denote the orthogonal projection defined by
  \begin{equation*}
    {\mathrm {pr}}_\theta(x, y) = x \cos(\theta) + y \sin(\theta)
  \end{equation*}
  for all $(x, y) \in \mathbb{R}^2$. Note that $\proj_2 = \Pi_{y} = {\mathrm {pr}}_{\frac{\pi}{2}}$.
  Put
\begin{equation}  \label{eq:lift-measure}
  \mu_{{\mathbf {t}}}=(\id,W_{{\mathbf {t}}})_* {\mathcal {L}}^1.
\end{equation}
 One can consider the projections of $\mu_ {\mathbf {t}}$ for $\theta \in [0, 2\pi)$, defined as
  \begin{equation*}
     \lambda_ {\mathbf {t}}^\theta = ({\mathrm {pr}}_\theta)_* \mu_ {\mathbf {t}},
  \end{equation*}
  (where $*$ denotes the push-forward).
  These measures $ \lambda_ {\mathbf {t}}^\theta$ can be called as \emph{$\theta$-oblique occupation measures}. Note that the occupation measure is $ \lambda_ {\mathbf {t}} =  \lambda_ {\mathbf {t}}^{\frac{\pi}{2}}$. 

Using methods from   \cite{buczolich2025levelsets} and \cite{AnttilaBaranyKaenmaki2025}, one can show that for prevalent functions in $\mathcal{W}^{\alpha,b}$, the $\theta$-oblique occupation measure is absolutely continuous with respect to the Lebesgue measure for Lebesgue almost every $\theta$. 
Here again we are in a situation when one can conjecture that in the previous statement almost every $\theta$ can be replaced by all $\theta$, that is, whether the $\theta$-oblique occupation measure is absolutely continuous for every $\theta$. This means that we can do an improvement similar to the one which was leading to Theorem \ref{thmmainwoc}. See also the paragraph following Theorem \ref{flevthm}.

\bibliographystyle{alpha}
\bibliography{wlsum}

\end{document}